\documentclass[11pt,a4paper]{amsart}
\usepackage[T1]{fontenc}
\usepackage[utf8]{inputenc}
\usepackage{lmodern}
\usepackage{amsmath,amssymb,amsthm}
\usepackage[left=26mm,right=26mm,top=25mm,bottom=25mm,headheight=14pt]{geometry}
\usepackage{microtype}
\usepackage{enumitem}
\usepackage{needspace}
\usepackage{mathtools}
\usepackage{xcolor}
\usepackage[unicode,pdfencoding=auto,colorlinks=true,linkcolor=blue!40!black,citecolor=blue!40!black,urlcolor=blue!40!black]{hyperref}
\hypersetup{pdftitle={Block relations and Clifford theory for H-triples},pdfauthor={Shi Chen},pdfsubject={Polished full manuscript}}
\setlist[enumerate,1]{label=\textup{(\roman*)},ref=\textup{(\roman*)},leftmargin=2.5em,itemsep=3pt,topsep=5pt}
\newcommand{\HH}{\mathcal H}
\newcommand{\PP}{\mathcal P}
\newcommand{\QQ}{\mathcal Q}
\newcommand{\RR}{\mathcal R}
\newcommand{\Qab}{\mathbb Q^{\mathrm{ab}}}
\newcommand{\OO}{\mathcal O}
\DeclareMathOperator{\Irr}{Irr}
\DeclareMathOperator{\Bl}{Bl}
\DeclareMathOperator{\bl}{bl}
\DeclareMathOperator{\Gal}{Gal}
\DeclareMathOperator{\Aut}{Aut}
\DeclareMathOperator{\Inn}{Inn}
\DeclareMathOperator{\GL}{GL}
\DeclareMathOperator{\Sym}{Sym}
\DeclareMathOperator{\Syl}{Syl}
\DeclareMathOperator{\htc}{ht}
\DeclareMathOperator{\tr}{tr}
\DeclareMathOperator{\Br}{Br}
\DeclareMathOperator{\N}{N}
\DeclareMathOperator{\C}{C}
\DeclareMathOperator{\Z}{Z}
\newtheorem{lemma}{Lemma}[section]
\newtheorem{proposition}[lemma]{Proposition}
\newtheorem{theorem}[lemma]{Theorem}
\newtheorem{corollary}[lemma]{Corollary}
\newtheorem*{theoremA}{Theorem A}
\theoremstyle{definition}
\newtheorem{definition}[lemma]{Definition}
\theoremstyle{remark}
\newtheorem*{remark}{Remark}
\numberwithin{equation}{section}
\newcommand{\doi}[1]{\href{https://doi.org/#1}{doi:\,\nolinkurl{#1}}}
\newcommand{\record}[2]{\href{#1}{#2}}
\title[Block relations and Clifford theory]{Block relations and Clifford theory for $\HH$-triples}
\author{Shi Chen}
\address{School of Mathematics and Statistics, Central China Normal University,
Wuhan 430079, P. R. China}
\email{chenshitjnu@163.com}
\date{}
\subjclass[2020]{20C15, 20C20, 20C25}
\keywords{Character triples, Clifford theory, Galois automorphisms, blocks, projective representations, character correspondences}
\begin{document}
\begin{abstract}
We study block relations between $\HH$-triples of finite groups.
We prove a Clifford correspondence theorem in which a single pair of
associated projective representations satisfies both the mixed
comparison-function identities and the block conditions on every
intermediate group. The resulting correspondence preserves relative character heights and
yields a common defect group for each pair of corresponding blocks.
We also establish induction and gluing results, including an induction
criterion that allows unequal Clifford indices. Direct products,
wreath products and changes of ambient group are treated within the
same framework. These results provide character-triple tools for
Galois-equivariant local--global correspondences.
\end{abstract}

\maketitle

\section*{Introduction}

Clifford theory relates the irreducible characters of a finite group to
those of its normal subgroups \cite{Clifford,Isaacs}. Character triples
allow these relations to be compared across different groups. In block
theory, the resulting correspondences must also be compatible with block
induction on intermediate groups. This compatibility is a key ingredient
in the work of Navarro and Sp\"ath \cite{NS} and in reduction theorems for
the Alperin--McKay and Dade conjectures \cite{AM,Dade}; see also
\cite{SpConditions}. Related uses of character triples in the McKay
conjecture appear in \cite{IMN,Rossi,NavMcKay}. Results on Clifford
theory, induced blocks and character heights were obtained by
Koshitani and Sp\"ath \cite{KS} and by Murai \cite{Murai94,Murai96}.
For general background on block theory, see \cite{Linck1,Linck2,NavBlocks}.

Galois refinements of local--global conjectures require these
correspondences to respect an additional action on characters
\cite{NavGalois}. Clifford theory with Galois action, including questions
concerning fields of values and Schur indices, was studied by Ladisch
\cite{Ladisch}. Navarro, Sp\"ath and Vallejo \cite{NSV} introduced
$\HH$-triples and relations between them in their reduction of the
Galois--McKay conjecture. Their construction encodes the action of mixed
group--Galois stabilizers through comparison functions of associated
projective representations. For a block relation between $\HH$-triples,
these functions and the correspondences compatible with block induction
must arise from the same pair of projective representations. Establishing
this simultaneous compatibility under Clifford correspondence is the
main purpose of the present paper.

In the modular setting, Feng, Fu and Zhou \cite{FFZ} develop
central and block isomorphisms of $\HH$-triples in their study of the
blockwise Navarro Alperin weight conjecture. Here we work with ordinary
characters and track both relative heights and the specified defect groups.

Fix a prime $p$. Let
$\HH=\HH_p\leq\Gal(\mathbb Q^{\mathrm{ab}}/\mathbb Q)$ consist of the
automorphisms whose action on all roots of unity of order prime to $p$
is a common integral power of $\xi\mapsto\xi^p$.
An $\HH$-triple $(G,N,\theta)_{\HH}$ consists of a normal subgroup
$N\unlhd G$ and a character $\theta\in\Irr(N)$ whose $G$-conjugates
belong to its Galois orbit $\theta^{\HH}$. Definition~\ref{def:block} introduces the
block relation $\geq_b$ by combining the mixed comparison-function
conditions with the ordinary block conditions, including the common
defect group and the correspondences on intermediate groups. For a
character $\chi$ of a normal subgroup of $A$, write $A_{\chi^{\HH}}$
for the stabilizer of its Galois orbit. We use
$\Irr(X\mid\theta^{\HH})$ for the irreducible characters of $X$
lying over a member of $\theta^{\HH}$.

Our main result transfers the block relation to corresponding characters
of normal intermediate subgroups.

\begin{theoremA}
Suppose that
\[
 (G,N,\theta)_{\HH}\geq_b(H,M,\varphi)_{\HH},
 \qquad N\leq J\unlhd G,
\]
and fix a pair of associated projective representations affording this
relation, with common defect group $D$. Set $L=J\cap H$. The
correspondences determined by this pair and its simultaneous Galois
transforms, followed by Clifford induction, give an
$H\times\HH$-equivariant bijection
\[
 \Omega_J:\Irr(J\mid\theta^{\HH})
 \longrightarrow\Irr(L\mid\varphi^{\HH}).
\]
For every $\psi$ in its domain, with $\psi'=\Omega_J(\psi)$, we have
\[
 (G_{\psi^{\HH}},J,\psi)_{\HH}
 \geq_b(H_{(\psi')^{\HH}},L,\psi')_{\HH}.
\]
If $\psi\in\Irr(J\mid\theta)$, then
\[
 \frac{\psi(1)}{\theta(1)}=\frac{\psi'(1)}{\varphi(1)},
 \qquad
 \htc(\psi)-\htc(\theta)=\htc(\psi')-\htc(\varphi),
\]
and the blocks of $\psi$ and $\psi'$ have a common defect group $Q$
such that $Q\cap N=Q\cap M=D$ and $\N_J(Q)\leq L$.
\end{theoremA}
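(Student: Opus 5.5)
We reduce to the classical block-theoretic Clifford correspondence of Navarro--Sp\"ath, using the fixed pair $(\mathcal P,\mathcal P')$ of associated projective representations throughout, and then restore the Galois data through simultaneous Galois transforms.

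\emph{Setup.} By Definition~\ref{def:block}, the pair $(\mathcal P,\mathcal P')$ has a common factor set $\alpha$ on $G_\theta/N\cong H_\varphi/M$. Write $J_\theta=J\cap G_\theta$ and $L_\varphi=L\cap H_\varphi$. Since $G=NH$, we have $J_\theta=NL_\varphi$. For $\eta\in\Irr(J_\theta\mid\theta)$, use the projective tensor factorisation $\eta\leftrightarrow\mathcal P\otimes\mathcal Q$, with $\mathcal Q$ a projective representation of $J_\theta/N$ with factor set $\alpha^{-1}$. Send $\eta$ to the character $\eta'$ afforded by $\mathcal P'\otimes\mathcal Q$. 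This gives a bijection $\sigma:\Irr(J_\theta\mid\theta)\to\Irr(L_\varphi\mid\varphi)$. Define $\Omega_J$ on $\Irr(J\mid\theta)$ by $\eta^J\mapsto(\eta')^L$, via Clifford induction. Degree ratios are preserved because $|J:J_\theta|=|L:L_\varphi|$ and $\eta(1)/\theta(1)=\dim\mathcal Q=\eta'(1)/\varphi(1)$.

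\emph{Extending to the full domain.} For $\psi\in\Irr(J\mid\theta^\tau)$ with $\tau\in\HH$, transform the pair by $\tau$. The comparison-function identities in the definition of $\geq_b$ show that $(\mathcal P^\tau,\mathcal P'^\tau)$ again affords the relation for $\theta^\tau,\varphi^\tau$. Moreover, for $h\in H_{\theta^\HH}$ it agrees with the $h$-conjugate pair up to the same scalar function $\mu$ on both sides. Hence the resulting map is well defined: different choices of $\tau$, differing by elements of the mixed stabiliser $(H\times\HH)_\theta$, give the same image. By construction the map is then $H\times\HH$-equivariant.

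\emph{Block relation for $\psi$.} The key step is to check $(G_{\psi^\HH},J,\psi)_\HH\geq_b(H_{(\psi')^\HH},L,\psi')_\HH$. As the new associated pair I would take the representations obtained by inducing $\mathcal P\otimes\mathcal Q$ from the stabiliser $G_{\eta^\HH}\cap G_\theta$, where $\mathcal Q$ is extended to a projective representation of $(G_{\psi^\HH})_\theta/N$ via the standard argument of \cite[Thm.~5.3]{NSV}. The same extension of $\mathcal Q$ is used on the $H$ side. The comparison functions then multiply: $\mu_{\text{new}}=\mu_{\mathcal P}\cdot\mu_{\mathcal Q}$, and this identity holds simultaneously on both sides, which gives the mixed conditions.

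\emph{Block conditions.} Following the Navarro--Sp\"ath argument for Theorem~4.1 of \cite{NS}, take an intermediate group $J\le X\le G_{\psi^\HH}$. Block induction from $\bl(\chi')$ to $\bl(\chi)$ is proved via the hypothesis on $X_\theta\ge N$ applied to $\mathcal P\otimes\mathcal Q$ and $\mathcal P'\otimes\mathcal Q$. This uses that the correspondence for $\theta$ is compatible with $\bl^{X_\theta}$, followed by the Fong--Reynolds transfer $X_\theta\to X$.

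\emph{Defect groups and heights.} For the defect group statement, apply the $D$-condition of Definition~\ref{def:block} at $J_\theta$. This produces a defect group $Q$ of $\bl(\eta)$ that is also a defect group of $\bl(\eta')$, with $Q\cap N=Q\cap M=D$ and $\N_{J_\theta}(Q)\le L_\varphi$. Fong--Reynolds then transports $Q$ to $\bl(\psi)$ and $\bl(\psi')$. For the normaliser, use $\N_J(Q)\le J_\theta$: this holds since $Q\cap N=D$ is a defect group of $\bl(\theta)$ and a Frattini-type argument applies. Heights then follow. The equality of degree ratios, together with the common defect group $Q$ and the common $D$, gives $\htc(\psi)-\htc(\theta)=\htc(\psi')-\htc(\varphi)$, because the relevant $p$-parts $|J:Q|_p$ and $|L:Q|_p$ differ by $|N:D|_p/|M:D|_p$.

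\emph{Main obstacle.} I expect the hardest step to be showing that the \emph{single} extended pair satisfies both the comparison-function identities over $\HH$ and the block-induction conditions on every intermediate group. I would first try to choose $\mathcal Q$ $\HH$-stably, up to a scalar function, using that $\HH$ fixes $p'$-roots of unity up to a power of Frobenius.
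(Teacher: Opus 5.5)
Your proposal has genuine gaps, in the defect-group and block steps and in the comparison-function step.

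\textbf{Defect groups and blocks.} A common defect group $Q$ of $\bl(\eta)$ and $\bl(\eta')$ cannot be carried to $\bl(\psi)$ by Fong--Reynolds. That theorem concerns the \emph{block} stabilizer $J_b$, with $b=\bl(\theta)$, not the character stabilizer $J_\theta$. In fact $\bl(\eta)$ and $\bl(\eta^J)$ need not share a defect group. Take $p=2$, $N=C_4\unlhd G=H=J=D_8$, and $\theta$ faithful. Then $\theta^J=\{\theta,\bar\theta\}\subseteq\theta^{\HH}$ and the trivial relation holds. But $\bl(\eta)=\bl(\theta)$ has defect group $C_4$, while $\bl(\psi)$ has defect group $D_8$. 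The same example refutes your claim $\N_J(Q)\le J_\theta$, since $\N_{D_8}(C_4)=D_8$.

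Consequently your route establishes neither the common defect group of $\bl(\psi)$ and $\bl(\psi')$ nor the height formula. Your height computation is fine once such a $Q$ is available. For the same reason, the passage $X_\theta\to X$ in your block-induction paragraph is not a Fong--Reynolds step, and you never show that $\bl(\chi')^X$ is defined.

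The missing ingredients are those of Lemmas~\ref{lem:2-13}--\ref{lem:2-15} and Proposition~\ref{prop:2-16}:
\begin{enumerate}
\item $\N_G(D)\le H$, by a Frattini argument using semi-invariance of the local triple;
\item $H_b=H_{b'}$, via the common Brauer correspondent in $\N_N(D)=\N_M(D)$;
\item a common Harris--Kn\"orr correspondent in $\N_W(D)$ for the block stabilizers $W=V_b$ and $W'=(V\cap H)_{b'}$, which produces $Q$ with $Q\cap N=Q\cap M=D$;
\item Fong--Reynolds from $W$ to $V$, together with $\N_V(Q)\le\N_V(D)\le V\cap H$. This last inclusion gives the normalizer condition and also makes the final block induction defined.
\end{enumerate}

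\textbf{Mixed comparison functions.} This step is not carried out. An associated representation for $\psi$ must live on $G_\psi$, and $\QQ$ extends only to $G_\eta=G_{\psi,\theta}$. It does not extend to $(G_{\psi^{\HH}})_\theta$ or to $G_{\eta^{\HH}}\cap G_\theta$, whose elements may send $\eta$ to a proper Galois conjugate.

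Your product formula $\mu_{\mathrm{new}}=\mu_\PP\mu_\QQ$ covers only mixed elements fixing $\eta$. That is the homogeneous case, Theorem~\ref{thm:2-7}. There it follows from Schur's lemma in the form $A_a=S_a\otimes B_a$, with no need for an $\HH$-stable choice of $\QQ$.

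Elements of $(H\times\HH)_\psi$ may, however, move $\theta$ within its $L$-orbit. The paper handles these in three moves:
\begin{enumerate}
\item It corrects such an $a$ to $a_0=a(l^{-1},1)$, with $l\in L$ chosen so that $a_0$ fixes $\theta$, $\eta$ and $\eta'$ (Lemma~\ref{lem:2-9}).
\item It induces from $G_\eta$ and $H_{\eta'}$ using a single transversal of $U$ in $L$.
\item It writes explicit block-permutation intertwiners using the same permutation and the same elements $u_i\in U$ on both sides (Proposition~\ref{prop:2-10}); the factor $(l,1)$ then contributes a trivial comparison function.
\end{enumerate}
This is exactly the ``main obstacle'' that your proposal leaves open.
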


Theorem~\ref{thm:2-19} gives the full statement, including the induced
correspondences on every intermediate group. The normal subgroup $J$
need not be contained in the inertia group $G_\theta$, and the characters
$\theta$ and $\varphi$ may have arbitrary heights. In particular, the
correspondence preserves heights whenever
$\htc(\theta)=\htc(\varphi)$. Corollary~\ref{cor:2-20} specializes the result to
height-zero characters with a specified defect group.

The proof first treats the case of homogeneous restriction. In the
general case, we construct induced projective representations using a
common transversal and compute their comparison functions for the full
mixed stabilizers. We then establish the block-induction identities
for the correspondences afforded by these same representations.
This construction also identifies the maps on intermediate groups and
allows us to compare the defect groups and relative heights of
corresponding characters.

We further consider families of correspondences indexed by characters
of a normal subgroup. Theorem~\ref{thm:3-2} transfers such a family to a larger
normal subgroup, and Theorem~\ref{thm:3-3} assembles correspondences on inertia
groups into one on the whole group. The latter construction allows
the two Clifford indices to differ. The induction criterion in
Lemma~\ref{lem:3-1} supplies the required conditions, including induction
bijections on intermediate groups and corrections within mixed
stabilizers. We also establish structural properties for direct
products, permutations of repeated factors and changes of ambient
group. These include the block butterfly theorem for height-zero
characters in Lemma~\ref{lem:butterfly}.

Section~1 defines the block relation and proves its structural
properties. Section~2 establishes Theorem~A and the relative-height
formula. Section~3 develops the induction and gluing results.

\section{\texorpdfstring{$\HH$}{H}-triples and blocks}

We define a block relation between $\HH$-triples and establish its
compatibility with restriction, composition, products and changes of
ambient group. Throughout, we keep track of the associated projective
representations: the same pair must determine both the mixed
group--Galois comparison functions and the character correspondences
compatible with block induction.

\subsection{Notation and ambient groups}

All groups are finite, and all characters are ordinary characters.
An \emph{overgroup} of $X$ is a group containing $X$ as a subgroup.
An \emph{ambient group} of $X$ is an overgroup $A$ such that
$X\unlhd A$. Thus a statement quantified over all ambient groups of
$X$ concerns every pair $X\unlhd A$ with $A$ finite. For $D\leq X$,
we have
\[
  \N_X(D)\unlhd\N_A(D),
\]
so $\N_A(D)$ is an ambient group for the corresponding local problem.

Fix a prime $p$. Let $\HH=\HH_p\leq\Gal(\Qab/\mathbb Q)$ be the
subgroup consisting of the automorphisms whose action on all roots of
unity of order prime to $p$ is a common integral power of
$\xi\mapsto\xi^p$. This power may depend on the automorphism, but
not on the root of unity. We use right actions. For $X\unlhd A$,
$\chi\in\Irr(X)$, $h\in A$ and $\sigma\in\HH$, we set
\[
  \chi^{h\sigma}(x)=\sigma\bigl(\chi(hxh^{-1})\bigr)
  \qquad(x\in X).
\]
The group and Galois actions commute. For a subgroup $W$, we write
$W^h=h^{-1}Wh$. The inertia group of $\chi$ in $A$ is denoted by
$A_\chi$, and
\[
  A_{\chi^{\HH}}=\{a\in A\mid\chi^a\in\chi^{\HH}\}
\]
denotes the stabilizer in $A$ of the Galois orbit $\chi^{\HH}$.

Write $\Bl(X)$ for the set of $p$-blocks of $X$. For
$b\in\Bl(X)$, let $\Irr_0(b)$ denote the set of irreducible
characters of height zero in $b$. For a $p$-subgroup $D\leq X$, set
\[
  \Irr_0(X\mid D)=
  \bigcup_{\substack{b\in\Bl(X)\\D\text{ is a defect group of }b}}
  \Irr_0(b).
\]
The subgroup $D$ is specified, and we keep this choice throughout the
constructions below. We use Brauer's definition of block induction.
For a block $c$ of a subgroup of $J$, the notation $c^J$ is used only
when the induced block is defined.

For modular reduction, choose a splitting $p$-modular system
$(K,\OO,k)$ large enough for the groups under consideration. The
unique maximal ideal of $\OO$ is its Jacobson radical $J(\OO)$.
Thus $k=\OO/J(\OO)$, and we write $a^*=a+J(\OO)$ for the residue
class of $a\in\OO$.

\subsection{Character fibers}

Let $K\unlhd X$. For $\zeta\in\Irr(K)$, write
$\Irr(X\mid\zeta)$ for the irreducible characters of $X$ whose
restriction to $K$ contains $\zeta$. For $\mathcal S\subseteq\Irr(K)$, set
\[
  \Irr(X\mid \mathcal S)=\bigcup_{\zeta\in \mathcal S}\Irr(X\mid\zeta),
  \qquad
  \Irr_0(X\mid D,\mathcal S)=\Irr_0(X\mid D)\cap\Irr(X\mid \mathcal S).
\]
We omit braces when $\mathcal S$ is a singleton. These sets are called the
\emph{fibers} over the indicated character or set of characters of
$K$. Fibers over individual characters need not be disjoint: by
Clifford theory, $\Irr(X\mid\zeta)=\Irr(X\mid\eta)$ whenever
$\zeta$ and $\eta$ are $X$-conjugate. Accordingly, the
disjoint decompositions used below are indexed by $X$-orbits, or by
$X\times\HH$-orbits when the fibers are taken over Galois orbits.

For $Z\leq\Z(X)$ and $\lambda\in\Irr(Z)$, the central-character
fiber is
\[
  \Irr_0(X\mid D,\lambda)
  =\{\chi\in\Irr_0(X\mid D)\mid\chi_Z=\chi(1)\lambda\}.
\]
As $\lambda$ ranges over $\Irr(Z)$, these fibers form a disjoint
decomposition of $\Irr_0(X\mid D)$. If $X\unlhd A$ and $a\in A$
normalizes both $Z$ and $D$, then $(a,\sigma)$, with $\sigma\in\HH$,
maps the fiber over $\lambda$ onto the fiber over
$\lambda^{a\sigma}$. A bijection preserves central characters on a
common central subgroup if it maps each $\lambda$-fiber onto the
$\lambda$-fiber on the other side. This condition is imposed before
passing to quotients by character kernels.

\subsection{Projective representations and mixed stabilizers}
\label{subsec:projective}

Projective representations are denoted by calligraphic letters. For
$N\unlhd G$ and $\theta\in\Irr(N)$, we call
$(G,N,\theta)_{\HH}$ an $\HH$-triple if
\[
  \{\theta^g\mid g\in G\}\subseteq\theta^{\HH}.
\]
Its corresponding ordinary character triple is $(G_\theta,N,\theta)$.
For $H\leq G$, define the mixed stabilizer by
\[
  (H\times\HH)_\theta
  =\{(h,\sigma)\in H\times\HH\mid\theta^{h\sigma}=\theta\}.
\]
A projective representation $\PP$ of $G_\theta$ is associated with
the ordinary character triple $(G_\theta,N,\theta)$ if its restriction
to $N$ affords $\theta$ and it satisfies
\[
  \PP(nx)=\PP(n)\PP(x),\qquad
  \PP(xn)=\PP(x)\PP(n)
  \qquad(n\in N,\ x\in G_\theta).
\]
Our convention for its factor set is
\[
  \PP(x)\PP(y)=\alpha(x,y)\PP(xy).
\]
The factor set $\alpha$ is therefore inflated from $G_\theta/N$.
We choose the representations over sufficiently large finite
cyclotomic fields and require their factor sets to take values in
roots of unity.

Let $\mathcal A:L\longrightarrow\GL(V)$ and
$\mathcal B:L\longrightarrow\GL(W)$ be ordinary or projective
representations over the same field $F$. An $F$-linear map
$T:V\longrightarrow W$ is an \emph{intertwiner} from $\mathcal A$
to $\mathcal B$ if
\[
  T\mathcal A(x)=\mathcal B(x)T\qquad(x\in L).
\]
Its matrix in chosen bases is called an \emph{intertwining matrix}.
We write $\mathcal A\sim\mathcal B$ if an invertible intertwiner
exists; equivalently, the two representations are conjugate by a single
invertible matrix independent of $x$. All intertwining matrices used
below are invertible.

For $a=(h,\sigma)\in(G\times\HH)_\theta$, the construction in
\cite[Lemma~1.4]{NSV} gives a uniquely determined normalized function
$\mu_a$ and an invertible matrix $T_a$ such that
\begin{equation}\label{eq:comparison}
  \PP(hxh^{-1})^\sigma
  =\mu_a(x)T_a\PP(x)T_a^{-1}
  \qquad(x\in G_\theta).
\end{equation}
Here $T_a$ is an intertwining matrix from $\PP$ to the projective
representation
\[
  x\longmapsto\mu_a(x)^{-1}\PP(hxh^{-1})^\sigma.
\]
The matrix $T_a$ is independent of $x$. The function $\mu_a$ is
constant on $N$-cosets, satisfies $\mu_a(1)=1$, and is unaffected
by scalar rescaling of $T_a$. If
$f:G_\theta/N\longrightarrow(\Qab)^\times$ satisfies $f(1)=1$,
then replacing $\PP$ by $f\PP$ changes the factor set and the
comparison functions according to
\begin{equation}\label{eq:rescaling}
  \alpha\longmapsto\alpha\,df,
  \qquad
  \mu_a(x)\longmapsto
  \mu_a(x)\frac{f(hxh^{-1})^\sigma}{f(x)}.
\end{equation}
Here $f$ is also viewed as an inflated function on $G_\theta$, and
$df(x,y)=f(x)f(y)/f(xy)$. In particular, equality of factor sets
alone does not imply equality of mixed comparison functions.

\subsection{Block relations and a criterion}

The following definition combines the block conditions of \cite{NS}
with the mixed comparison functions of \cite{NSV}.

\begin{definition}\label{def:block}
Let $N\unlhd G$, let $H\leq G$, and set $M=N\cap H$. Suppose
that $\theta\in\Irr(N)$ and $\varphi\in\Irr(M)$ satisfy
\[
  \{\theta^g\mid g\in G\}\subseteq\theta^{\HH},
  \qquad
  \{\varphi^h\mid h\in H\}\subseteq\varphi^{\HH}.
\]
We write
\[
  (G,N,\theta)_{\HH}\geq_b(H,M,\varphi)_{\HH}
\]
if the following conditions hold.
\begin{enumerate}
\item $G=NH$, $\C_G(N)\leq H$, and
      $(H\times\HH)_\theta=(H\times\HH)_\varphi$.
\item There are associated projective representations
      \[
        \PP:G_\theta\longrightarrow\GL_{\theta(1)}(\Qab),
        \qquad
        \PP':H_\varphi\longrightarrow\GL_{\varphi(1)}(\Qab)
      \]
      whose factor sets take values in roots of unity and agree on
      $H_\varphi\times H_\varphi$. For every $c\in\C_G(N)$,
      the matrices $\PP(c)$ and $\PP'(c)$ are scalar matrices with
      the same scalar.
\item For every $a\in(H\times\HH)_\theta$, the comparison functions
      satisfy $\mu'_a=\mu_a|_{H_\varphi}$.
\item The blocks $\bl(\theta)$ and $\bl(\varphi)$ have a common
      defect group $D$ such that $\N_N(D)\leq M$. For every
      $N\leq W\leq G_\theta$, the correspondence
      \[
        \tau_W:\Irr(W\mid\theta)
        \longrightarrow\Irr(W\cap H\mid\varphi)
      \]
      afforded by $(\PP,\PP')$ satisfies
      \[
        \bl\bigl(\tau_W(\xi)\bigr)^W=\bl(\xi)
        \qquad\bigl(\xi\in\Irr(W\mid\theta)\bigr).
      \]
      In particular, each indicated block induction is defined.
\end{enumerate}
\end{definition}

The relation $\geq_c$ is defined by conditions (i)--(iii). The pair
chosen in (ii) must also afford every correspondence required in (iv).
Condition (i) implies $H_\theta=H_\varphi$ and
$G_\theta=NH_\varphi$.

\begin{remark}
The correspondences in Definition~\ref{def:block}(iv) already have
the following covariance property. For $N\leq W\leq G_\theta$ and
$(h,\sigma)\in(H\times\HH)_\theta$,
\[
  \tau_{W^h}(\xi^{h\sigma})
  =\tau_W(\xi)^{h\sigma}
  \qquad\bigl(\xi\in\Irr(W\mid\theta)\bigr).
\]
This follows from conditions (i)--(iii); see \cite[Lemma~1.9(a)]{NSV}
and Lemma~\ref{lem:2-3}. In particular, each $\tau_W$
is equivariant under $\HH_\theta=\HH_\varphi$, where
$\HH_\theta=\{\sigma\in\HH\mid\theta^\sigma=\theta\}$.
The individual fiber $\Irr(W\mid\theta)$ need not be stable under
the whole of $\HH$.
\end{remark}

\begin{lemma}[A criterion for the block relation]\label{lem:criterion}
Let $(G,N,\theta)_{\HH}$ and $(H,M,\varphi)_{\HH}$ be
$\HH$-triples with $H\leq G$, $G=NH$, and $N\cap H=M$.
Suppose that $\bl(\theta)$ and $\bl(\varphi)$ have a common
defect group $D$ satisfying $\N_N(D)\leq M$. For a pair of
associated projective representations $(\PP,\PP')$, the following
conditions are equivalent.
\begin{enumerate}
\item The pair affords the relation in Definition~\ref{def:block}.
\item The pair affords the $\geq_c$-relation and the block isomorphism
      \[
        (G_\theta,N,\theta)\sim_b(H_\varphi,M,\varphi)
      \]
      of ordinary character triples in the sense of
      \cite[Definition~3.6]{NS}.
\item The pair affords the $\geq_c$-relation, satisfies
      $\C_{G_\theta}(D)\leq H_\varphi$, and, for every
      $N\leq J\leq G_\theta$, its induced correspondence
      \[
        \tau_J:\Irr(J\mid\theta)
        \longrightarrow\Irr(J\cap H_\varphi\mid\varphi)
      \]
      satisfies $\bl(\tau_J(\xi))^J=\bl(\xi)$ for every
      $\xi\in\Irr(J\mid\theta)$, with the indicated block
      induction defined.
\end{enumerate}
In each case, $\N_{G_\theta}(D)\leq H_\varphi$, and hence
$\C_J(D)\leq J\cap H_\varphi$ for every $N\leq J\leq G_\theta$.
\end{lemma}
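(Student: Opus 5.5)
The plan is to reduce everything to one group-theoretic observation, namely $\N_{G_\theta}(D)\leq H_\varphi$, and then check that conditions (i)--(iii) of the lemma are rewordings of one another once that inclusion is known. First I record the consequences of the standing hypotheses. Since $G=NH$ and $M=N\cap H$, we have $G_\theta=NH_\theta$ by Dedekind's law. If the pair affords $\geq_c$, then condition (i) of Definition~\ref{def:block} gives $H_\theta=H_\varphi$, so $G_\theta=NH_\varphi$ and $W\cap H=W\cap H_\varphi$ for every $N\leq W\leq G_\theta$. Thus the intermediate groups and target fibers in Definition~\ref{def:block}(iv) and in condition (iii) of the lemma coincide. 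The correspondences $\tau_W$ in both places are the ones afforded by the same pair $(\PP,\PP')$ through the usual construction of \cite{NS} and \cite{NSV}.

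Next I prove $\N_{G_\theta}(D)\leq H_\varphi$; this step needs only $G_\theta=NH_\varphi$ and $\N_N(D)\leq M$. Let $x\in\N_{G_\theta}(D)$ and write $x=nh$ with $n\in N$ and $h\in H_\varphi$. Since $h$ fixes $\varphi$, it fixes $\bl(\varphi)$, so $D^{h^{-1}}$ is again a defect group of $\bl(\varphi)$. Hence $D^{h^{-1}}=D^{m}$ for some $m\in M$. From $D^{nh}=D$ we get $D^{n}=D^{h^{-1}}=D^{m}$, so $nm^{-1}\in\N_N(D)\leq M$. Therefore $n\in M$ and $x\in MH_\varphi=H_\varphi$. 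Intersecting with $J$ gives $\C_J(D)\leq\N_J(D)\leq J\cap H_\varphi$. This proves the final assertion of the lemma in all three cases, and it also supplies $\C_{G_\theta}(D)\leq H_\varphi$.

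With this in hand, the implications are as follows.
\begin{itemize}
\item (i)$\Rightarrow$(iii): Conditions (i)--(iii) of Definition~\ref{def:block} are the $\geq_c$-relation. The inclusion $\C_{G_\theta}(D)\leq H_\varphi$ was just proved. The block identities are Definition~\ref{def:block}(iv), read with $W\cap H=W\cap H_\varphi$.
\item (iii)$\Rightarrow$(i): This is immediate, since the defect-group hypothesis is part of the standing assumptions.
\item (ii)$\Leftrightarrow$(iii): Unpack \cite[Definition~3.6]{NS} for the triples $(G_\theta,N,\theta)$ and $(H_\varphi,M,\varphi)$. It asks for $G_\theta=NH_\varphi$, $M=N\cap H_\varphi$ and $\C_{G_\theta}(N)\leq H_\varphi$. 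It asks for associated projective representations with equal factor sets on $H_\varphi$ and equal scalars on $\C_{G_\theta}(N)$. It asks for a defect group $D$ of $\bl(\varphi)$ with $\C_{G_\theta}(D)\leq H_\varphi$. Finally, it asks for $\bl(\tau_J(\xi))^J=\bl(\xi)$ for all intermediate $J$.
\end{itemize}
The structural and projective parts of that definition are implied by the $\geq_c$-relation. Note that $\C_{G_\theta}(N)\leq\C_G(N)\leq H$ and $\C_{G_\theta}(N)\leq G_\theta$, so $\C_{G_\theta}(N)\leq H\cap G_\theta=H_\varphi$. The remaining parts are exactly the extra clauses in (iii). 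The $\geq_c$ part appears on both sides of (ii)$\Leftrightarrow$(iii), so nothing further is needed there.

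The main obstacle is bookkeeping rather than mathematics. I must make sure that \cite[Definition~3.6]{NS} is stated with the same normalization of the afforded correspondence $\tau_J$, so that the map built from $(\PP,\PP')$ via tensor factorisation with projective representations of $J/N$ is literally the same map in both definitions. I also need that Brauer induction $\bl(\tau_J(\xi))^J$ is defined in both readings; here I would invoke $\C_J(D)\leq J\cap H_\varphi$, which gives definedness for blocks of $J\cap H_\varphi$ whose defect groups contain $D$. Finally, I would check that the defect group $D$ in \cite{NS}, chosen for $\bl(\varphi)$ alone, may be taken to be our common $D$; this holds because $D$ is a defect group of $\bl(\varphi)$ by hypothesis.
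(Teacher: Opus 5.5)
Your proposal is correct and is essentially the paper's proof: the step $D^{h^{-1}}=D^m$ with $m\in M$ is the Frattini factorization $H_\varphi=M\N_{H_\varphi}(D)$ written out, and it yields the same inclusion $\N_{G_\theta}(D)\leq H_\varphi$. The equivalences are likewise the same unpacking of \cite[Definition~3.6]{NS} together with $W\cap H=W\cap H_\varphi$; you simply route them through (iii), where the paper routes them through (i).
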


\begin{proof}
Each condition includes the $\geq_c$-relation, so
$G_\theta=NH_\varphi$ and $N\cap H_\varphi=M$. Consequently,
$J=N(J\cap H_\varphi)$ whenever $N\leq J\leq G_\theta$.
Since $\varphi$ is $H_\varphi$-invariant, the Frattini argument
applied to the defect groups of $\bl(\varphi)$ gives
\[
  H_\varphi=M\N_{H_\varphi}(D),
  \qquad
  G_\theta=N\N_{H_\varphi}(D).
\]
For $x\in\N_{G_\theta}(D)$, write $x=nh$ with $n\in N$ and
$h\in\N_{H_\varphi}(D)$. Then $n\in\N_N(D)\leq M$, whence
$x\in H_\varphi$. This proves the normalizer inclusion and both
centralizer inclusions in the statement. The argument uses only the
$\geq_c$-relation and the common-defect hypotheses, and therefore
applies under each of (i)--(iii).

In (i) and (ii), the fixed pair induces the same identification of
quotients and the same character correspondences. In view of the
centralizer inclusion just proved, the definition of a block
isomorphism in \cite[Definition~3.6]{NS} shows that the remaining
requirement is precisely the block equality in
Definition~\ref{def:block}(iv). Thus (i) and (ii) are equivalent.

Finally, $J\leq G_\theta$ implies $J\cap H=J\cap H_\varphi$.
The correspondences in (iii) are therefore exactly those in
Definition~\ref{def:block}(iv), afforded by the same pair. Hence
(i) implies (iii). Conversely, (iii), together with the common-defect
hypotheses, supplies all the conditions of Definition~\ref{def:block}.
\end{proof}

The criterion retains both the specified common defect group and the
block identities on all intermediate groups. It allows us to verify
the ordinary block conditions and the mixed comparison functions
separately for a fixed pair of representations.

\begin{lemma}[Trace criterion]\label{lem:trace}
Under the common-defect hypotheses of Lemma~\ref{lem:criterion},
suppose that $(\PP,\PP')$ affords the $\geq_c$-relation. Then
it affords the $\geq_b$-relation if and only if
\begin{equation}\label{eq:trace}
  \left(\frac{|N|_{p'}\tr\PP(x)}
  {p^{\htc(\theta)}\theta(1)_{p'}}\right)^*
  =
  \left(\frac{|M|_{p'}\tr\PP'(x)}
  {p^{\htc(\varphi)}\varphi(1)_{p'}}\right)^*
\end{equation}
for every $p'$-element $x\in H_\varphi$ such that
$D\in\Syl_p(\C_N(x))$.
\end{lemma}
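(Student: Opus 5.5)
By Lemma~\ref{lem:criterion}, it suffices to show that \eqref{eq:trace} for all relevant $x$ is equivalent to the block identities $\bl(\tau_J(\xi))^J=\bl(\xi)$ for all $N\leq J\leq G_\theta$. The centralizer inclusion $\C_{G_\theta}(D)\leq H_\varphi$ is automatic, as shown there. The plan has three steps: compute central characters of $\xi$ and $\tau_J(\xi)$ on class sums, use Brauer's criterion for block induction, and reduce to elements of the form in \eqref{eq:trace}.

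\textbf{Central characters.} Fix $J$ and $\xi\in\Irr(J\mid\theta)$. Write $\xi$ via the associated projective representation: $\xi$ is afforded by $\PP_J\otimes\QQ$, where $\QQ$ is an irreducible projective representation of $J/N$ with inverse factor set. Then $\tau_J(\xi)$ is afforded by $\PP'_{J\cap H}\otimes\QQ_{J\cap H}$. The character values are
\[
\xi(y)=\tr\PP(y)\,\tr\QQ(y),\qquad \tau_J(\xi)(y)=\tr\PP'(y)\,\tr\QQ(y),\qquad y\in J\cap H_\varphi.
\]
Brauer's criterion says that $\bl(\tau_J(\xi))^J=\bl(\xi)$ iff
\[
\omega_\xi(\widehat{K})^*=\omega_{\tau_J(\xi)}\bigl(\widehat{K\cap \C_J(D_0)}\bigr)^*
\]
for all classes $K$ of $J$, where $D_0$ is a defect group of $\bl(\tau_J(\xi))$. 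The standard approach of \cite{NS} (compare \cite[Theorem~4.1]{NS} / the Dade--Sp\"ath argument) reduces this to elements $y=xn$ with $x$ a $p'$-element of $H_\varphi$. The relevant classes are those meeting $\C_J(Q)$ for a defect group $Q$ with $Q\cap N=D$. Moreover, by the usual $p$-part/Brauer-pair argument only $p'$-sections matter modulo $J(\OO)$.

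\textbf{Reduction to $N$-cosets.} Split a $J$-class sum into $N$-coset pieces. On a coset $xN$ with $x\in H_\varphi$, the contribution to $\omega_\xi$ is
\[
\frac{|J:\C_J(x)|}{\xi(1)}\,\xi(x)
=\frac{\text{(index in }J/N)\cdot |xN\text{-class sizes}|}{\theta(1)\,\QQ\text{-deg}}\tr\PP(x)\tr\QQ(x).
\]
The $\QQ$-parts coincide on both sides. What remains is the $N$-part, $\frac{|N|}{|\C_N(x)|\theta(1)}\tr\PP(x)$, versus the $M$-part on the $H$ side. Modulo $J(\OO)$ this is nonzero only when $\C_N(x)$ has Sylow $p$-subgroup $D$, the defect-group condition. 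Normalizing by $p^{\htc(\theta)}$, which uses $|N|_p/\theta(1)_p=p^{d-\htc}$ and $|\C_N(x)|_p=|D|$, turns the $N$-part exactly into the left side of \eqref{eq:trace}, up to the common $p'$-factor $|\C_N(x)|_{p'}$. The analogous computation gives the right side, with $\C_M(x)=\C_N(x)\cap H$ having the same Sylow $D$ because $\N_N(D)\leq M$. Hence \eqref{eq:trace} implies the central character identity for every $J$, which gives ``if''.

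\textbf{Converse.} For ``only if'', take $J=N\langle x\rangle$ (or $J=N\C_{G_\theta}(x)$-type subgroups) and characters $\xi$ whose $\QQ$-trace at $x$ is a unit. Such characters exist by column orthogonality for projective characters of the $p'$-part of $\langle xN\rangle$. The block identity then forces \eqref{eq:trace}.

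\textbf{Main obstacle.} The hardest step is the $p$-local bookkeeping: showing that only $p'$-elements $x\in H_\varphi$ with $D\in\Syl_p(\C_N(x))$ contribute, and that $p$-singular parts of coset representatives can be absorbed. I would handle this by following \cite[Lemma~3.7, Theorem~4.1]{NS}, since the mixed comparison functions play no role in this part.
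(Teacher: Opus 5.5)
Your reduction via Lemma~\ref{lem:criterion} is fine, but the `if' direction has a genuine gap.

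\textbf{The key vanishing claim is false.} In \emph{Reduction to $N$-cosets} you assert that $\frac{|N:\C_N(x)|}{\theta(1)}\tr\PP(x)$ is nonzero modulo $J(\OO)$ only when $D\in\Syl_p(\C_N(x))$. For $x=1$ this quantity equals $1$, while $D$ need not be a Sylow $p$-subgroup of $N$. So the $p$-regular classes of $J$ outside the range of \eqref{eq:trace} cannot be discarded this way. Consequently the trace identity alone does not give $\lambda_{\bl(\xi)}(\hat K)=\lambda_{\bl(\tau_J(\xi))}(\widehat{K\cap J'})$ for all $p$-regular classes $K$ of $J$. Here $J'=J\cap H_\varphi$, and $\lambda$ is the central character modulo $J(\OO)$.

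\textbf{Brauer's criterion is misstated.} The comparison above, using $K\cap J'$, is the correct form. A sum over $K\cap\C_J(D_0)$ belongs to the Brauer-homomorphism description of blocks of $\C_J(D_0)$, not to $\omega_{\tau_J(\xi)}$.

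\textbf{What the reduction actually needs.} Restricting to elements with $D\in\Syl_p(\C_N(x))$ requires a block-theoretic argument. For instance, suppose $\bl(\xi)$ and $\bl(\tau_J(\xi))^J$ were known to share a defect group $Q$ with $Q\cap N=D$. Then it would suffice to compare them on classes of $p$-regular $x$ with $Q\in\Syl_p(\C_J(x))$. For such $x$, $Q\cap\C_N(x)=D$ is Sylow in $\C_N(x)$. Moreover $\N_J(D)\leq J'$ gives $\C_J(x)=\C_N(x)\C_{J'}(x)$, which your equality of the `$\QQ$-parts' silently needs. It also shows that only the $J'$-class of $x$ in $K\cap J'$ contributes. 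Your sketch supplies neither the common defect group nor this control of the other $J'$-classes in $K\cap J'$. A smaller point: $|\C_N(x)|_{p'}$ and $|\C_M(x)|_{p'}$ are not equal in general. They are only congruent modulo $p$, by counting Sylow subgroups and using $\N_{\C_N(x)}(D)=\N_{\C_M(x)}(D)$.

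\textbf{The `only if' direction works along your lines.} On $N\langle x\rangle$ take $\QQ$ one-dimensional; the quotient is cyclic of $p'$-order, so no orthogonality argument is needed. Then the min--max theorem together with $\N_N(D)\leq M$ shows that only the $M\langle x\rangle$-class of $x$ contributes in $K_x\cap M\langle x\rangle$.

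\textbf{How the paper proves it.} The paper reproves none of this. The equivalence is exactly \cite[Theorem~4.4]{NS}, applied to $(G_\theta,N,\theta)$ and $(H_\varphi,M,\varphi)$ with the given pair. The integrality needed for the residues comes from \cite[Lemma~4.3]{NS}. Lemma~\ref{lem:criterion}, (i)$\Leftrightarrow$(ii), then converts the ordinary block isomorphism into the $\geq_b$-relation. Since you defer the `main obstacle' to \cite{NS} anyway, you should cite that theorem for the whole statement, rather than a reconstruction whose central reduction is incorrect.
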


\begin{proof}
Apply \cite[Theorem~4.4]{NS} to $(G_\theta,N,\theta)$ and
$(H_\varphi,M,\varphi)$ with the given pair. The factor-set and
central-scalar conditions are already satisfied. The integrality
needed to take residues in \eqref{eq:trace} follows from
\cite[Lemma~4.3]{NS}. The theorem characterizes the ordinary block
isomorphism afforded by this pair, so Lemma~\ref{lem:criterion}
gives the assertion.
\end{proof}

\subsection{Transport and composition}

We first record how restriction, Galois conjugation and group
isomorphisms transport the relation and its associated representations.
We then prove transitivity by arranging that the two pairs agree on
the middle triple.

\begin{lemma}\label{lem:galois-blocks}
Let $b\in\Bl(X)$, let $D$ be a defect group of $b$, and let
$\sigma\in\HH$. Then $D$ is a defect group of $b^\sigma$.
Moreover, Brauer correspondence commutes with the action of $\sigma$.
\end{lemma}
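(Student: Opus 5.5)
Two facts about $\sigma\in\HH$ drive the argument: it maps $p$-blocks of $X$ to $p$-blocks of $X$, and it preserves the central-character data that define Brauer correspondence. The action of $\sigma$ on the $\mathbb Z$-span of characters is compatible with a ring automorphism of $\OO$ (after enlarging the modular system and choosing an extension of $\sigma$ stabilizing $J(\OO)$). Because $\sigma$ acts on $p'$-roots of unity as a fixed power of the Frobenius $\xi\mapsto\xi^p$, the induced automorphism of $k$ is a power of the Frobenius. Concretely, let $\lambda_b$ be the central character of $b$ on class sums,
\[
\lambda_b(\widehat{C})=\Bigl(\frac{|C|\chi(x_C)}{\chi(1)}\Bigr)^*,\qquad \chi\in\Irr(b).
\]
Then $\lambda_{b^\sigma}=\phi\circ\lambda_b$, where $\phi$ is the induced field automorphism of $k$. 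Here $b^\sigma$ is the block containing $\chi^\sigma$, and it is well defined because blocks are determined by central characters.

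\textbf{Defect groups.} I would use the characterization of defect groups through the block idempotent $e_b=\sum_C a_C\widehat C\in\Z(kX)$. The defect group is, up to conjugacy, a Sylow $p$-subgroup of $\C_X(x_C)$ for a defect class $C$, meaning a class with $a_C\neq0$ and $\lambda_b(\widehat C)\neq0$ (Brauer's min-max theorem). The block idempotent of $b^\sigma$ is $e_{b^\sigma}=\sum_C\phi(a_C)\widehat C$. To see this, note that applying $\phi$ coefficientwise gives a ring automorphism of $kX$ fixing class sums, so it permutes primitive central idempotents. The image of $e_b$ satisfies the central-character condition that identifies $b^\sigma$. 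Since $\phi$ is injective, the conditions $a_C\neq0$ and $\lambda_b(\widehat C)\neq0$ hold for $b$ exactly when the corresponding conditions hold for $b^\sigma$. So $b$ and $b^\sigma$ have the same defect classes, hence the same defect groups, and in particular $D$ is a defect group of $b^\sigma$.

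\textbf{Brauer correspondence.} For $H\le X$ and $c\in\Bl(H)$ with $c^X$ defined, Brauer's definition gives $\lambda_{c^X}(\widehat C)=\lambda_c(\widehat{C\cap H})$ for every class $C$ of $X$. Composing both sides with $\phi$ and using $\lambda_{c^\sigma}=\phi\circ\lambda_c$ yields
\[
\lambda_{(c^X)^\sigma}(\widehat C)=\lambda_{c^\sigma}(\widehat{C\cap H}).
\]
The right-hand side is a central character of $X$ (namely $\phi\circ\lambda_{c^X}$), so $(c^\sigma)^X$ is defined and equals $(c^X)^\sigma$. The same computation, applied to the Brauer homomorphism with $H=\N_X(D)$, gives compatibility with the First Main Theorem bijection.

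\textbf{Main obstacle.} The delicate step is the identity $\lambda_{b^\sigma}=\phi\circ\lambda_b$. It requires lifting $\sigma$ to an automorphism of $\OO$ that preserves $J(\OO)$. I would first try to extend $\sigma$ to $\mathbb Q(\zeta_{|X|})$ and localize at a prime above $p$ fixed by $\sigma$. The decomposition group at $p$ contains the Frobenius on the $p'$-part and all of $\Gal(\mathbb Q(\zeta_{p^a})/\mathbb Q)$ on the $p$-part, so it contains exactly the elements of $\HH$. Restricting to the relevant cyclotomic field therefore gives an element that fixes the chosen prime, and we then complete.
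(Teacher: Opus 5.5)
Your proof is correct, but it takes a different route from the paper's.

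\textbf{Lifting $\sigma$.} The paper builds $\sigma$ directly on the local field $K=\mathbb Q_p(\zeta_{|X|})$, using that the totally ramified part $\mathbb Q_p(\zeta_{p^a})$ and the unramified part $\mathbb Q_p(\zeta_m)$ are linearly disjoint. You instead use the decomposition group of a prime above $p$ in $\mathbb Q(\zeta_{|X|})$. This is the same fact in global language: the image of $\HH$ is exactly that decomposition group, so every $\sigma\in\HH$ fixes the chosen prime and passes to the completion.

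\textbf{Blocks and defect groups.} The paper works in $\Z(\OO X)$ with the Brauer map. It shows $e_{b^\sigma}=e_b^\sigma$ and $\Br_Q(e_{b^\sigma})=\Br_Q(e_b)^{\bar\sigma}$, then uses that defect groups are the maximal $Q$ with nonzero Brauer image. It identifies the Brauer correspondent in $\N_X(D)$ through $\Br_D(e_c)=\Br_D(e_b)$ and uniqueness. You stay in $\Z(kX)$ with central characters, the class-sum coefficients of $e_b$, and defect classes, so you rely on the min--max theorem. For the second assertion you use Brauer's original definition of $c^X$.

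\textbf{What each route buys.} Your route is more general for the second assertion. It gives $(c^\sigma)^X=(c^X)^\sigma$ for every subgroup $H$ and every $c$ with $c^X$ defined. This is the form of compatibility the paper later invokes, only briefly justified, in Lemma~\ref{lem:transport}(ii) and Lemma~\ref{lem:2-1}. The First Main Theorem case then needs one more step, which you only gesture at: apply your defect-group statement to $\N_X(D)$ to see that $c^\sigma$ has defect group $D$, and invoke uniqueness of the Brauer correspondent. The paper's route is more economical for that specific correspondent and avoids the min--max theorem, relying instead on the Brauer-morphism characterization of defect groups.
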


\begin{proof}
Write $|X|=p^a m$, where $p\nmid m$, and choose compatible
cyclotomic roots of unity in $\mathbb C$ and
$\overline{\mathbb Q}_p$. We use the splitting $p$-modular system
$(K,\OO,k)$ with $K=\mathbb Q_p(\zeta_{|X|})$, where $\OO$ is
its valuation ring and $k=\OO/J(\OO)$. The extensions
$\mathbb Q_p(\zeta_{p^a})/\mathbb Q_p$ and
$\mathbb Q_p(\zeta_m)/\mathbb Q_p$ are totally ramified and
unramified, respectively, and are linearly disjoint. Since
$\sigma\in\HH$, its action on the $m$th roots of unity is a
power of Frobenius. Together with its action on the $p^a$th roots
of unity, this defines an automorphism of $K$, again denoted by
$\sigma$, agreeing with the original action on character values.
It preserves $\OO$ and $J(\OO)$ and induces an automorphism
$\overline\sigma$ of $k$.

Let $e_b\in\Z(\OO X)$ be the block idempotent of $b$.
Applying $\sigma$ coefficientwise to the ordinary central
idempotents shows that $e_{b^\sigma}=e_b^\sigma$. For every
$p$-subgroup $Q\leq X$, the Brauer map
$\Br_Q^X:(\OO X)^Q\longrightarrow k\C_X(Q)$ satisfies
\[
  \Br_Q^X(e_{b^\sigma})
  =\Br_Q^X(e_b)^{\overline\sigma}.
\]
Indeed, the Brauer map retains the coefficients supported on
$\C_X(Q)$ and reduces them modulo $J(\OO)$, and these operations
commute with the corresponding coefficient actions. Thus the two
Brauer images are nonzero simultaneously for every $Q$. Defect groups
are precisely the maximal $p$-subgroups for which the Brauer image
of the block idempotent is nonzero. Hence $b$ and $b^\sigma$ have
the same defect groups, proving the first assertion.

Set $Y=\N_X(D)$, and let $c$ be the Brauer correspondent of $b$
in $Y$. Since $\C_Y(D)=\C_X(D)$, the idempotent formulation of
Brauer's first main theorem characterizes $c$ as the unique block
of $Y$ with defect group $D$ satisfying
\[
  \Br_D^Y(e_c)=\Br_D^X(e_b).
\]
Applying $\overline\sigma$ to this equality gives
\[
  \Br_D^Y(e_{c^\sigma})
  =\Br_D^Y(e_c)^{\overline\sigma}
  =\Br_D^X(e_b)^{\overline\sigma}
  =\Br_D^X(e_{b^\sigma}).
\]
The first assertion, applied to $X$ and $Y$, shows that both
$b^\sigma$ and $c^\sigma$ have defect group $D$. The uniqueness
of the Brauer correspondent now proves the second assertion.
\end{proof}

\begin{lemma}\label{lem:transport}
Suppose that $(G,N,\theta)_{\HH}\geq_b(H,M,\varphi)_{\HH}$ is
afforded by $(\PP,\PP')$. Each of the following constructions
yields a block relation afforded by the indicated pair.
\begin{enumerate}
\item If $N\leq G_1\leq G$ and $H_1=G_1\cap H$, restrict the
      representations to $(G_1)_\theta$ and $(H_1)_\varphi$.
\item If $\sigma\in\HH$, apply $\sigma$ to the characters and
      to every matrix entry of both representations.
\item Transport the groups, characters, specified defect group and
      both representations along the same group isomorphism.
\end{enumerate}
The resulting character correspondences commute with these
constructions.
\end{lemma}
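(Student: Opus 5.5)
We check the four conditions of Definition~\ref{def:block} separately for each construction. Each construction produces new $\HH$-triples, a new pair of associated projective representations and a transported defect group, and the verification is routine except for the Galois case. For each construction we identify the new inertia groups, mixed stabilizers, comparison functions and induced correspondences, and then rely on Lemma~\ref{lem:criterion} and Lemma~\ref{lem:galois-blocks} for the block conditions.

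\textbf{(i) Restriction.} Set $N_1=N$ and $M_1=N\cap H_1=M$. The equality $G_1=NH_1$ follows from the Dedekind law applied to $G=NH$ and $N\leq G_1$. We also have $\C_{G_1}(N)\leq\C_G(N)\leq H$, so $\C_{G_1}(N)\leq H_1$. Intersecting with $H_1\times\HH$ gives $(H_1\times\HH)_\theta=(H_1\times\HH)_\varphi$. The restrictions of $\PP$ and $\PP'$ to $(G_1)_\theta=G_1\cap G_\theta$ and $(H_1)_\varphi$ are still associated, and their factor sets and central scalars are restrictions of the old ones. For $a\in(H_1\times\HH)_\theta$ we can reuse the same matrix $T_a$ in \eqref{eq:comparison}; by uniqueness, the comparison functions are the restrictions of $\mu_a$ and $\mu'_a$, so condition (iii) persists. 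The defect group $D$ and the condition $\N_N(D)\leq M$ are unchanged. Every $N\leq W\leq(G_1)_\theta$ also satisfies $W\leq G_\theta$, and the correspondence $\tau_W$ depends only on the restrictions of the pair to $W$ and $W\cap H$. Hence the block identities of (iv) are inherited verbatim.

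\textbf{(ii) Galois conjugation.} This is the main point. The triples become $(G,N,\theta^\sigma)_\HH$ and $(H,M,\varphi^\sigma)_\HH$. Since $\HH$ is abelian and commutes with the group action, $(H\times\HH)_{\theta^\sigma}=(H\times\HH)_\theta$, and likewise for $\varphi$; moreover $G_{\theta^\sigma}=G_\theta$. The matrices $\PP^\sigma$ and $\PP'^\sigma$ are associated with $\theta^\sigma$ and $\varphi^\sigma$. Their factor sets are $\alpha^\sigma$, which are again roots of unity and still agree, and central scalars are carried to $\sigma$-images of equal scalars. For condition (iii), apply $\sigma$ to \eqref{eq:comparison}. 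Since the Galois group is abelian, $\PP(hxh^{-1})^{\tau\sigma}=(\PP(hxh^{-1})^\tau)^\sigma$. The new comparison function for $a=(h,\tau)$ is therefore $\mu_a^\sigma$, with intertwiner $T_a^\sigma$, and similarly $\mu'^\sigma_a$; equality on $H_\varphi$ is preserved. The correspondence afforded by the new pair is $\tau_W^\sigma\colon\xi^\sigma\mapsto\tau_W(\xi)^\sigma$. To see this, write characters of $W$ over $\theta$ as $\PP_W\otimes\mathcal Q$ with $\mathcal Q$ a projective representation of $W/N$ with factor set $\alpha^{-1}$, and apply $\sigma$ entrywise. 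Lemma~\ref{lem:galois-blocks} shows that $\bl(\theta^\sigma)=\bl(\theta)^\sigma$ and $\bl(\varphi^\sigma)$ keep the defect group $D$. The same lemma gives that block induction commutes with $\sigma$: Brauer's definition via central characters modulo $J(\OO)$ is compatible with $\overline\sigma$, by the coefficientwise argument used there. Hence $\bl(\tau_W(\xi)^\sigma)^W=(\bl(\tau_W(\xi))^W)^\sigma=\bl(\xi)^\sigma=\bl(\xi^\sigma)$. The step I expect to be delicate is precisely this commutation of $\sigma$ with block induction for arbitrary $W$, not just for Brauer correspondents. I would prove it directly from the formula $\omega_{c^W}(\widehat{K})=\omega_c(\widehat{K}\cap W\cap H)$, applying $\overline\sigma$, with the extension of $\sigma$ to $K$ constructed in Lemma~\ref{lem:galois-blocks}.

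\textbf{(iii) Isomorphism.} A group isomorphism $\iota$ transports every ingredient: normality, centralizers, inertia groups, mixed stabilizers, factor sets, comparison functions (via $T_a$ at $\iota(h)$), defect groups, block induction and correspondences. Each condition is therefore equivalent to its transport. Finally, the statement that the correspondences commute with these constructions has already been recorded in each case: the correspondence for (i) is the same map, for (ii) it is $\tau_W^\sigma$, and for (iii) it is $\iota\circ\tau_W\circ\iota^{-1}$.
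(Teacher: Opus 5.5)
Your proposal is correct and follows the same route as the paper: you check the conditions one construction at a time. In the Galois case you apply $\sigma$ entrywise to the defining matrix equations, which gives factor sets $\alpha^\sigma$ and comparison functions $\mu_a^\sigma$; Lemma~\ref{lem:galois-blocks} keeps the defect group $D$, and the class-sum definition of block induction is compatible with $\overline\sigma$. Your extra details, namely the Dedekind law for $G_1=NH_1$ and the commutativity of $\HH$ for the stabilizers and comparison functions, make explicit what the paper leaves implicit.
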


\begin{proof}
In (i), we have $G_1=NH_1$. Restricting the original data gives the
required identities for mixed stabilizers, factor sets, comparison
functions and central scalars. Every intermediate group for the
restricted relation is already an intermediate group for the original
one. The characters of the normal subgroups and their common defect
group $D$ remain unchanged.

In (ii), Galois action commutes with group conjugation and with the
tensor construction of the character correspondences from projective
representations of the quotients. Applying $\sigma$ to the defining
matrix equations gives the factor sets and comparison functions for
the conjugate pair. Lemma~\ref{lem:galois-blocks} shows that $D$ is
still a common defect group. The class-sum definition of block
induction is compatible with the induced coefficient action, so the
block-induction identities pass to the conjugate blocks.

For (iii), transport the matrix equations and the class-sum identities
along the isomorphism. The tensor construction of the correspondences
commutes with this transport as well.
\end{proof}

\begin{lemma}\label{lem:transitivity}
Suppose that
\[
  (G,N,\theta)_{\HH}\geq_b(H,M,\varphi)_{\HH}
  \geq_b(J,L,\psi)_{\HH},
\]
and that both relations are given with the same defect group $D$.
Then $(G,N,\theta)_{\HH}\geq_b(J,L,\psi)_{\HH}$.
\end{lemma}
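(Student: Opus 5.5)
The plan is to compose the two affording pairs after first making them agree on the middle triple. Let $(\PP,\PP')$ afford the first relation and $(\QQ,\QQ')$ the second. Both $\PP'$ and $\QQ$ are projective representations of $H_\varphi$ associated with $(H_\varphi,M,\varphi)$. Since $\QQ_M$ and $\PP'_M$ both afford $\varphi$, there is a matrix $T$ with $T\QQ(m)T^{-1}=\PP'(m)$ for $m\in M$. Then $x\mapsto T\QQ(x)T^{-1}\PP'(x)^{-1}$ commutes with $\PP'(M)$, so by Schur it is a scalar $f(x)$, constant on $M$-cosets, with $f(1)=1$. We replace $(\QQ,\QQ')$ by $(fT\QQ T^{-1},\,f|_{J_\psi}\QQ')$. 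By \eqref{eq:rescaling}, both factor sets and both families of comparison functions change in the same way, and the correspondences $\tau$ are unchanged, since they depend only on the projective classes through the tensor construction. Conjugating by $T$ is harmless. The central-scalar condition also persists, because $f$ is evaluated at the same elements on both sides and $\C_H(M)\supseteq\C_G(N)\cap H$. The relevant point is that $\C_G(N)\le H$ and $\C_G(N)\le \C_H(M)\le J$, so $\C_G(N)\le J$. After this step $\QQ=\PP'$, and we may take $(\PP,\QQ')$ as the candidate pair.

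Next we verify conditions (i)--(iii) for $(\PP,\QQ')$. We have $G=NH=NMJ=NJ$ and $N\cap J=N\cap H\cap J=M\cap J=L$. The centralizer condition follows from $\C_G(N)\le \C_H(M)\le J$, using $\C_G(N)\le H$ and $\C_G(N)$ centralizing $M$. The mixed stabilizers satisfy $(J\times\HH)_\theta=(J\times\HH)\cap(H\times\HH)_\varphi=(J\times\HH)_\psi$. For the factor sets, those of $\PP$ and $\PP'=\QQ$ agree on $H_\varphi$, and those of $\QQ$ and $\QQ'$ agree on $J_\psi\subseteq H_\varphi$. The comparison functions chain in the same way: $\mu''_a=\mu'_a|_{J_\psi}=\mu_a|_{J_\psi}$, where $\mu'$ is now the same function for $\PP'$ and for $\QQ$ because they are equal. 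The scalars for $c\in\C_G(N)$ chain as well, since $\PP(c)$ and $\PP'(c)$ have the same scalar, and $\PP'(c)=\QQ(c)$ and $\QQ'(c)$ have the same scalar.

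For (iv), we first need $\N_N(D)\le L$. We have $\N_N(D)\le M$, so $\N_N(D)=\N_M(D)\le \N_M(D)\cap N$, and $\N_M(D)\le L$. The correspondence $\tau_W$ for $N\le W\le G_\theta$ factors as $\tau^{(2)}_{W\cap H}\circ\tau^{(1)}_W$. This uses two facts. First, $W\cap H$ is an intermediate group between $M$ and $H_\varphi$, since $W\le G_\theta$ implies $W\cap H\le H_\theta=H_\varphi$. Second, the tensor construction is compatible: the character of $W$ corresponding to a projective character $\eta$ of $W/N$ (with the inverse factor set) is sent to the one attached to $\eta$ restricted through $W\cap H/M\cong W/N$, and likewise at the next step. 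Transitivity of block induction, with induced blocks defined at each stage, then gives
\[
\bl(\tau_W(\xi))^W=\bigl(\bl(\tau_W(\xi))^{W\cap H}\bigr)^W=\bl(\tau^{(1)}_W(\xi))^W=\bl(\xi).
\]

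The main obstacle is this last step: that $(c^{X})^W$ is defined and equals $c^W$ for Brauer's block induction. Definedness in general requires a centralizer condition. I would handle it via Lemma~\ref{lem:criterion}. There, $\C_W(D)\le W\cap J$, because $\N_{G_\theta}(D)\le H_\varphi$ and $\N_{H_\varphi}(D)\le J_\psi$, so $\C_W(D)\le W\cap J_\psi$. Defect groups of the involved blocks contain $D$ with centralizers in the smaller groups, so induction is defined and transitive by \cite{NavBlocks}. Alternatively, one can apply the trace criterion, Lemma~\ref{lem:trace}, to the chained residues. For $x\in J_\psi$ a $p'$-element with $D\in\Syl_p(\C_N(x))$, we get $D\le \C_M(x)\le \C_N(x)$, so $D\in\Syl_p(\C_M(x))$. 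Hence the two identities \eqref{eq:trace} compose to give the required one. I would present this trace argument as the cleaner route.
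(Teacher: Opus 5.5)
Your proposal is correct and follows essentially the paper's proof: you align the middle representations using an intertwiner, Schur's lemma and a common rescaling of the second pair, chain conditions (i)--(iii), and obtain (iv) by factoring $\tau_W$ through $W\cap H$ and applying transitivity of block induction. Two small remarks: you should rescale by $f^{-1}$ rather than $f$ to get $\QQ=\PP'$; and no extra centralizer or trace argument is needed for definedness, because whenever $c^{W\cap H}$ and $(c^{W\cap H})^W$ are defined one has $\lambda_c^W=(\lambda_c^{W\cap H})^W$ on $\Z(kW)$, so $c^W$ is automatically defined and equals the two-step induction (your trace-criterion route is valid, but it relies unnecessarily on \cite[Theorem~4.4]{NS}).
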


\begin{proof}
Choose pairs $(\PP,\PP')$ and $(\RR,\RR')$ affording the two
relations. We first make the representations associated with the
middle triple agree. Conjugating $\RR$ by a fixed invertible matrix,
we may assume that $\RR_M=\PP'_M$; no change of basis in $\RR'$
is needed. Schur's lemma then gives
\[
  \RR(h)=c(h)\PP'(h)\qquad(h\in H_\varphi),
\]
where $c(1)=1$ and $c$ is constant on $M$-cosets. Since both factor sets take values in roots of unity, the matrices
$\RR(h)$ and $\PP'(h)$ have finite order for every $h\in H_\varphi$.
Raising the displayed equality to a common multiple of their orders
shows that $c(h)$ is a root of unity.

Rescale $\RR$ by $c^{-1}$ and $\RR'$ by the restriction of
$c^{-1}$. We then have $\RR=\PP'$. Both factor sets are
multiplied by $d(c^{-1})$, and the comparison functions acquire
the same factor $(c^{-1})^a/c^{-1}$, restricted to the appropriate
domain. The block correspondences are unchanged: a quotient
representation $\QQ$ whose factor set cancels that of the rescaled
pair corresponds to $\QQ c^{-1}$ for the original pair, and the
resulting tensor products are identical.

Thus $(\PP,\RR')$ has matching factor sets and mixed comparison
functions. The group conditions follow from
\[
  G=NH=N(MJ)=NJ,\qquad N\cap J=L.
\]
If $c\in\C_G(N)$, then $c\in H$ and $c$ centralizes $M$, so
$c\in J$. The original central-scalar identities give the required
scalar equality for $\PP(c)$ and $\RR'(c)$. The mixed stabilizer
equalities likewise compose. If $g\in\N_N(D)$, the first relation
gives $g\in M$, and the second then gives $g\in L$.

It remains to verify block induction. Let $N\leq W\leq G_\theta$
and choose a projective representation $\QQ$ of $W/N$ whose
inflated factor set cancels that of $\PP_W$. Using
\[
  W/N\cong(W\cap H)/M,\qquad M\leq W\cap H\leq H_\varphi,
\]
the two correspondences send $\tr(\QQ\otimes\PP)$ first to
$\tr(\QQ\otimes\PP')$ and then to $\tr(\QQ\otimes\RR')$,
with the appropriate restrictions understood. Since the middle
representations agree, their composite is precisely the correspondence
afforded by $(\PP,\RR')$. Transitivity of block induction now
gives the required block equality.
\end{proof}

\subsection{Direct products and wreath products}

For characters of different direct factors, $\otimes$ denotes the
external tensor product. A Galois automorphism acts simultaneously
on all factors, so we take the stabilizer of the $\HH$-orbit of the
tensor product as the ambient group. We first treat direct products
and then allow permutations of the factors.

\begin{lemma}[Direct products]\label{lem:products}
For $i=1,2$, suppose that
\[
  (G_i,N_i,\theta_i)_{\HH}
  \geq_b(H_i,M_i,\varphi_i)_{\HH}
\]
is afforded by $(\PP_i,\PP'_i)$ with common defect group $D_i$.
Set
\[
\begin{aligned}
  G&=G_1\times G_2,& H&=H_1\times H_2,\\
  N&=N_1\times N_2,& M&=M_1\times M_2,\\
  \Theta&=\theta_1\otimes\theta_2,&
  \Phi&=\varphi_1\otimes\varphi_2,
\end{aligned}
\qquad D=D_1\times D_2.
\]
Then $(\PP_1\otimes\PP_2,\PP'_1\otimes\PP'_2)$ affords
\[
  (G_{\Theta^{\HH}},N,\Theta)_{\HH}
  \geq_b(H_{\Phi^{\HH}},M,\Phi)_{\HH},
\]
with common defect group $D$.
\end{lemma}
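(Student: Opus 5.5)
We verify the conditions of Definition~\ref{def:block} for $\widetilde G=G_{\Theta^{\HH}}$ and $\widetilde H=H_{\Phi^{\HH}}$ directly, and use Lemma~\ref{lem:criterion}(iii) (or the trace criterion, Lemma~\ref{lem:trace}) for the block part.

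\textbf{Group conditions.} First, $\Theta^{(g_1,g_2)\sigma}=\theta_1^{g_1\sigma}\otimes\theta_2^{g_2\sigma}$, and an external tensor product determines its factors. Hence
\[
(H\times\HH)_\Theta=\{(h_1,h_2,\sigma)\mid (h_i,\sigma)\in(H_i\times\HH)_{\theta_i},\ i=1,2\}.
\]
By condition~(i) for each factor, this equals $(H\times\HH)_\Phi$. This gives $\widetilde H=\widetilde G\cap H$, and also $\widetilde G_\Theta=G_{1,\theta_1}\times G_{2,\theta_2}$ and $\widetilde H_\Phi=H_{1,\varphi_1}\times H_{2,\varphi_2}$.

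For $\widetilde G=N\widetilde H$, write $g=(n_1h_1,n_2h_2)\in\widetilde G$. Since $N$ fixes $\Theta$, the element $h=(h_1,h_2)$ maps $\Theta$ into $\Theta^{\HH}$, so $h\in\widetilde H$. For the centralizer condition, $\C_{\widetilde G}(N)\leq\C_{G_1}(N_1)\times\C_{G_2}(N_2)\leq H$, and it also lies in $\widetilde G$, so it lies in $\widetilde H$. The intersection is $N\cap\widetilde H=M$.

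\textbf{Projective representations.} Set $\PP=\PP_1\otimes\PP_2$ and $\PP'=\PP'_1\otimes\PP'_2$. Their factor sets are $\alpha_1\times\alpha_2$, which take values in roots of unity and agree on $\widetilde H_\Phi$. For $c=(c_1,c_2)\in\C_{\widetilde G}(N)$, the images are Kronecker products of equal scalars, so the central-scalar condition holds.

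Let $a=(h_1,h_2,\sigma)$ and put $a_i=(h_i,\sigma)$. Applying $\sigma$ entrywise commutes with Kronecker products, so
\[
\PP(hxh^{-1})^\sigma=\mu_{a_1}(x_1)\mu_{a_2}(x_2)\,(T_{a_1}\otimes T_{a_2})\,\PP(x)\,(T_{a_1}\otimes T_{a_2})^{-1}.
\]
By the uniqueness in \cite[Lemma~1.4]{NSV}, this gives $\mu_a=\mu_{a_1}\times\mu_{a_2}$, and the same holds for $\PP'$. Condition~(iii) then follows factorwise.

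\textbf{Blocks: defect group.} The block $\bl(\Theta)=\bl(\theta_1)\otimes\bl(\theta_2)$ has defect group $D_1\times D_2$, and similarly for $\Phi$. We also have $\N_N(D)=\N_{N_1}(D_1)\times\N_{N_2}(D_2)\leq M$.

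\textbf{Blocks: the main obstacle.} The difficulty is the block-induction identity for every intermediate group $N\leq W\leq\widetilde G_\Theta$. Such a $W$ need not be a direct product, so we cannot simply reduce to the factors. We therefore use Lemma~\ref{lem:trace} instead. Let $x=(x_1,x_2)$ be a $p'$-element of $\widetilde H_\Phi$ with $D\in\Syl_p(\C_N(x))$. Since $\C_N(x)=\C_{N_1}(x_1)\times\C_{N_2}(x_2)$, we get $D_i\in\Syl_p(\C_{N_i}(x_i))$ for each $i$.

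The left side of \eqref{eq:trace} factors as a product of the corresponding factor expressions. This uses $\tr\PP(x)=\tr\PP_1(x_1)\tr\PP_2(x_2)$, $|N|_{p'}=|N_1|_{p'}|N_2|_{p'}$, $\htc(\Theta)=\htc(\theta_1)+\htc(\theta_2)$, and $\Theta(1)_{p'}=\theta_1(1)_{p'}\theta_2(1)_{p'}$. The right side factors in the same way.

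Each factor expression is integral by \cite[Lemma~4.3]{NS}. Taking residues is multiplicative, so \eqref{eq:trace} for $\Theta$ follows from the factor identities. These hold by Lemma~\ref{lem:trace}, applied to each given relation via condition~(i) of Lemma~\ref{lem:criterion}. Applying Lemma~\ref{lem:trace} once more then yields the $\geq_b$-relation with common defect group $D$.
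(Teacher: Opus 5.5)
Your proof is correct. For the group conditions, factor sets, central scalars and comparison functions ($\mu_a=\mu_{a_1}\mu_{a_2}$ via $T_{a_1}\otimes T_{a_2}$), it follows the paper's argument; the block part takes a different route.

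The paper applies Lemma~\ref{lem:criterion} to get an ordinary block isomorphism for each factor. It then cites the tensor construction in the proof of \cite[Theorem~5.1]{Dade}, which proves the block-induction identity directly on every intermediate subgroup of $(G_{\Theta^{\HH}})_\Theta$, including those that are not direct products.

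You instead use Lemma~\ref{lem:trace} in both directions. The trace condition is pointwise in $p'$-elements $x=(x_1,x_2)$ with $D\in\Syl_p(\C_N(x))$. Both $\C_N(x)$ and the normalized expression $|N|_{p'}\tr\PP(x)/(p^{\htc(\Theta)}\Theta(1)_{p'})$ factor, and reduction modulo $J(\OO)$ is multiplicative. So the identity for $(\Theta,\Phi)$ follows from the identities for $(\theta_i,\varphi_i)$, and the awkward non-product intermediate groups never have to be faced.

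What your route buys is that it stays entirely within the paper's own tools and is essentially a one-line computation. What it costs is that it leans on the full strength of the trace criterion. That means \cite[Theorem~4.4]{NS} for arbitrary heights (the additivity of $\htc$ over the factors is exactly what makes the $p^{\htc}$ terms split), together with the integrality from \cite[Lemma~4.3]{NS} for each factor separately. The paper's route needs only the block-isomorphism formulation, plus an external theorem on tensor products of block-isomorphic triples.
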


\begin{proof}
Write $G^\times=G_{\Theta^{\HH}}$ and
$H^\times=H_{\Phi^{\HH}}$. For $g=(g_1,g_2)\in G^\times$,
choose a single $\sigma\in\HH$ such that
$\theta_i^{g_i}=\theta_i^\sigma$ for both $i$. Write
$g_i=n_i h_i$ with $n_i\in N_i$ and $h_i\in H_i$. The original
mixed stabilizer equalities imply
$\varphi_i^{h_i}=\varphi_i^\sigma$. Hence
$G^\times=NH^\times$, $N\cap H^\times=M$, and the two mixed
stabilizers agree. The centralizer condition follows coordinatewise.
Moreover,
\[
  (G^\times)_\Theta=(G_1)_{\theta_1}\times(G_2)_{\theta_2},
  \qquad
  (H^\times)_\Phi=(H_1)_{\varphi_1}\times(H_2)_{\varphi_2}.
\]

For $\PP=\PP_1\otimes\PP_2$, the factor set is
\[
  \alpha\bigl((x_1,x_2),(y_1,y_2)\bigr)
  =\alpha_1(x_1,y_1)\alpha_2(x_2,y_2).
\]
If $a=((h_1,h_2),\sigma)$ belongs to the mixed stabilizer, let
$T_{i,(h_i,\sigma)}$ be an intertwining matrix in the comparison
identity for $\PP_i$. Their tensor product
$T_{1,(h_1,\sigma)}\otimes T_{2,(h_2,\sigma)}$ gives
\[
  \mu_a(x_1,x_2)
  =\mu_{1,(h_1,\sigma)}(x_1)\mu_{2,(h_2,\sigma)}(x_2).
\]
The corresponding calculation for the primed representations yields
the same function on the local inertia group, as in
\cite[Lemma~2.5]{NSV}.

The product blocks have common defect group $D$, and
\[
  \N_N(D)=\N_{N_1}(D_1)\times\N_{N_2}(D_2)\leq M.
\]
By Lemma~\ref{lem:criterion}, each original pair affords an ordinary
block isomorphism. Its proof also supplies the centralizer condition
for the defect group required in the $N_i$-block formulation.
The tensor construction in the proof of \cite[Theorem~5.1]{Dade}
therefore gives the block-induction identity on every intermediate
subgroup of $(G^\times)_\Theta$, including those that are not
direct products. Lemma~\ref{lem:criterion} now proves the assertion.
\end{proof}

\begin{lemma}[Repeated factors]\label{lem:repeated}
Suppose that $(G,N,\theta)_{\HH}\geq_b(H,M,\varphi)_{\HH}$ is
afforded by $(\PP,\PP')$ with common defect group $D$. Let
$r\geq1$ be an integer, and let $\Delta_r$ denote the diagonal
embedding into an $r$-fold direct product. Set
\[
  \widehat G=(G_\theta\wr\Sym(r))\Delta_rG,
  \qquad
  \widehat H=(H_\varphi\wr\Sym(r))\Delta_rH.
\]
Then
\[
  (\widehat G,N^r,\theta^{\otimes r})_{\HH}
  \geq_b(\widehat H,M^r,\varphi^{\otimes r})_{\HH},
\]
with common defect group $D^r$. This relation is afforded by the
tensor-permutation representations constructed from $(\PP,\PP')$.
\end{lemma}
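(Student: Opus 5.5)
We verify conditions (i)--(iv) of Definition~\ref{def:block} for $\widehat G,\widehat H$, $\Theta=\theta^{\otimes r}$ and $\Phi=\varphi^{\otimes r}$, using Lemma~\ref{lem:criterion}(iii) and the trace criterion Lemma~\ref{lem:trace} for the block part.

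\emph{Group conditions.} Since $G=NH$ and $G_\theta=NH_\varphi$, we get $\widehat G=N^r\widehat H$ and $N^r\cap\widehat H=M^r$. Write an element of $\widehat G$ as $(g_1,\dots,g_r)\pi\cdot\Delta_r(g)$ with $g_i\in G_\theta$, $\pi\in\Sym(r)$ and $g\in G$. The stabilizers are computed coordinatewise. An element fixes $\Theta^\sigma$-type data exactly when the diagonal part fixes $\theta^\sigma$, because permutations fix $\Theta$. Hence the equality $(H\times\HH)_\theta=(H\times\HH)_\varphi$ gives $(\widehat H\times\HH)_\Theta=(\widehat H\times\HH)_\Phi$. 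We also obtain $\widehat G_\Theta=G_\theta\wr\Sym(r)$ and $\widehat H_\Phi=H_\varphi\wr\Sym(r)$. For the centralizer condition, note that $\C_{\widehat G}(N^r)$ is contained in the base group, since a nontrivial permutation moves some factor $N$ and cannot centralize $N^r$ unless $N=1$. The trivial case $N=1$ is easy. So $\C_{\widehat G}(N^r)\le\C_G(N)^r\le H^r$.

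\emph{Representations.} Let $\widehat\PP$ be the standard tensor-induced representation $\widehat\PP((x_1,\dots,x_r)\pi)=(\PP(x_1)\otimes\cdots\otimes\PP(x_r))\cdot P_\pi$, where $P_\pi$ permutes the tensor factors. Define $\widehat\PP'$ the same way from $\PP'$. The factor sets are products of $\alpha(x_i,y_{\pi(i)})$-type terms. They agree on $\widehat H_\Phi$ because $\alpha=\alpha'$ on $H_\varphi$, and permutations contribute trivially. Central scalars multiply coordinatewise. For the comparison functions, let $a=(\Delta_r(h)(h_i)\pi_0,\sigma)$. I would reduce to generators of the mixed stabilizer:
\begin{itemize}
\item base-group elements, via the product formula from Lemma~\ref{lem:products};
\item permutations, where the intertwiner is a permutation matrix and $\mu=1$;
\item diagonal elements $(\Delta_r(h),\sigma)$, with intertwiner $T_a^{\otimes r}$ and $\mu(x\pi)=\prod_i\mu_a(x_i)$.
\end{itemize}
The same formulas hold on the primed side, so $\hat\mu'_a=\hat\mu_a|_{\widehat H_\Phi}$ follows from (iii) for the original pair. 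The comparison functions are multiplicative up to composition rules only after fixing the $T$'s compatibly, so I will compute $\hat\mu_a$ directly on an arbitrary element rather than relying on a cocycle formula.

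\emph{Blocks.} $D^r$ is a common defect group of $\bl(\Theta)$ and $\bl(\Phi)$, and $\N_{N^r}(D^r)=\N_N(D)^r\le M^r$. By Lemma~\ref{lem:trace}, it suffices to check \eqref{eq:trace} for $p'$-elements $y=(x_1,\dots,x_r)\pi\in H_\varphi\wr\Sym(r)$ with $D^r\in\Syl_p(\C_{N^r}(y))$. This is the main obstacle. The trace of a tensor-induced matrix is $\prod_{\text{cycles }c}\tr\PP(x_{c})$, where $x_c$ is the cycle product of the $x_i$ along $c$. Moreover $\C_{N^r}(y)\cong\prod_c\C_N(x_c)$ via the diagonal on each cycle. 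Hence $D^r$ Sylow forces every cycle to have length one, since a long cycle would contribute only a diagonal copy of a $p$-group. I still need to check that this does not break the Sylow condition when $D=1$. In the length-one case each $x_c=x_i$ satisfies $D\in\Syl_p(\C_N(x_i))$. For the normalising constants, $|N^r|_{p'}=|N|_{p'}^r$, $\htc(\Theta)=r\,\htc(\theta)$ and $\Theta(1)_{p'}=\theta(1)_{p'}^r$. So both sides of \eqref{eq:trace} factor as $r$-fold products of the original residues, which agree by Lemma~\ref{lem:trace} applied to $(\PP,\PP')$. If $D=1$ and $p\nmid$ cycle lengths, the cycle products need separate treatment. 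In that case I would instead invoke the wreath-product argument of \cite[Theorem~5.1]{Dade} (via Lemma~\ref{lem:criterion}(ii)) to obtain the block isomorphism for $G_\theta\wr\Sym(r)$ directly, and then add the mixed comparison functions as above.
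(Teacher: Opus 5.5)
Your proposal is correct. It follows the paper for the group conditions and comparison functions, but takes a different route for the block conditions.

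\textbf{Where you agree with the paper.} The paper also writes a local mixed stabilizer element as an inertia element times a diagonal element $(\Delta_rh,\sigma)$. It uses that $T_\pi$ is rational and commutes with $T_{(h,\sigma)}^{\otimes r}$, which gives $\prod_i\mu_{(h,\sigma)}(x_i)$.

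\textbf{Where you differ.} For the blocks, the paper simply quotes the wreath-product block calculation \cite[Theorem~5.2]{Dade}. It then closes with Lemma~\ref{lem:criterion}. Note that the correct citation is Theorem~5.2; Theorem~5.1 is the direct-product result. You instead apply Lemma~\ref{lem:trace} to the tensor-permutation pair. When $D\ne1$, a $p'$-element of $H_\varphi\wr\Sym(r)$ centralizing $D^r$ must have trivial permutation part. So \eqref{eq:trace} factors coordinatewise and reduces to the case $r=1$. This is more self-contained than the citation and shows why the permutations are invisible to the block conditions. The paper's route is shorter and delivers the intermediate-group identities directly.

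\textbf{The case $D=1$ needs no fallback.} You do not need to revert to Dade here. The condition $\N_N(1)=N\le M$ forces $M=N$, hence $H=G$. A defect-zero block contains a single irreducible character, so $\bl(\varphi)^N=\bl(\theta)$ gives $\varphi=\theta$. The normalizing constants in \eqref{eq:trace} therefore coincide on both sides. For $y=(x_1,\dots,x_r)\pi$ with long cycles, $\tr\widehat\PP(y)$ and $\tr\widehat\PP'(y)$ are the same factor-set scalar times $\prod_c\tr\PP(x_c)$ and $\prod_c\tr\PP'(x_c)$, respectively. The cycle products $x_c$ are $p'$-elements, and since $\C_{N^r}(y)\cong\prod_c\C_N(x_c)$, each has $p'$-centralizer in $N$. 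The original trace identities at the $x_c$ then finish this case as well.

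\textbf{Two small repairs.}
\begin{itemize}
\item In the centralizer condition, $\C_G(N)^r\le H^r$ is not enough, because $H^r\not\le\widehat H$ in general. You need $\C_G(N)^r\le H_\varphi^r\le\widehat H$. This holds because elements centralizing $N$ fix $\theta$, so $\C_G(N)\le H\cap G_\theta=H_\varphi$.
\item Your worry about composing comparison functions is unnecessary. The normalized comparison function is unique, so $\mu_{ab}=\mu_a^b\mu_b$ as in \eqref{eq:2-26}. Checking generators of the mixed stabilizer therefore suffices.
\end{itemize}
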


\begin{proof}
The equality
$(\theta^{\otimes r})^{(g_1,\ldots,g_r)}
=(\theta^{\otimes r})^\sigma$ holds precisely when
$\theta^{g_i}=\theta^\sigma$ for every $i$. Thus all $g_i$ lie
in the same $G_\theta$-coset, and
\[
  \widehat G=(G\wr\Sym(r))_{(\theta^{\otimes r})^{\HH}},
  \qquad
  \widehat G_{\theta^{\otimes r}}=G_\theta\wr\Sym(r).
\]
The analogous equalities hold on the local side. The original mixed
stabilizer equality gives the required group factorization and mixed
stabilizer equality, as in \cite[Lemma~2.6]{NSV}.

Let $T_\pi$ permute the tensor factors according to
$\pi\in\Sym(r)$, and define
\begin{equation}\label{eq:wreath}
  \PP^\wr\bigl((x_1,\ldots,x_r)\pi\bigr)
  =\bigl(\PP(x_1)\otimes\cdots\otimes\PP(x_r)\bigr)T_\pi.
\end{equation}
Define $(\PP')^\wr$ similarly. Multiplication in
\eqref{eq:wreath} permutes and multiplies the coordinate factor
sets in the same way on both sides. The resulting factor sets
therefore agree on the local inertia group. The tensor construction
also gives the required central-scalar identities.

For a diagonal mixed element $(\Delta_rh,\sigma)$ stabilizing
$\theta^{\otimes r}$, equation~\eqref{eq:comparison} gives
\[
  \mu^\wr_{(\Delta_rh,\sigma)}
  \bigl((x_1,\ldots,x_r)\pi\bigr)
  =\prod_{i=1}^r\mu_{(h,\sigma)}(x_i).
\]
Indeed, $T_\pi$ has rational entries and commutes with
$T_{(h,\sigma)}^{\otimes r}$. Repeating the calculation for the
wreath-product representation constructed from $\PP'$ gives the
same formula with $\mu'$ in place of $\mu$. Every element of
the local mixed stabilizer is a product of an element of the local
inertia group and a diagonal mixed element of this form. For inertia
elements, the comparison functions are determined by the common
factor set. Composing the comparison equations therefore gives
equality for every mixed element, as in the final calculation of
\cite[Lemma~2.6]{NSV}.

The blocks of the given characters have common defect group $D^r$,
and
\[
  \N_{N^r}(D^r)=\N_N(D)^r\leq M^r.
\]
The block calculation in \cite[Theorem~5.2]{Dade} applies to the
representations in \eqref{eq:wreath} and yields the block-induction
identity on every intermediate group of $G_\theta\wr\Sym(r)$.
Together with the comparison-function equality established above,
Lemma~\ref{lem:criterion} gives the assertion.
\end{proof}

\begin{proposition}[Galois-conjugate factors]\label{prop:galois-factors}
Assume the hypotheses of Lemma~\ref{lem:repeated}. Let $k,r\geq1$
be integers, and let $\sigma_1,\ldots,\sigma_k\in\HH$. Set
$\theta_i=\theta^{\sigma_i}$, and suppose that the $\theta_i$
are pairwise nonconjugate under $G$. Write $s=kr$,
$\varphi_i=\varphi^{\sigma_i}$, and
\[
  \Theta=\bigotimes_{i=1}^k\theta_i^{\otimes r},
  \qquad
  \Phi=\bigotimes_{i=1}^k\varphi_i^{\otimes r}.
\]
Then
\[
  \bigl((G\wr\Sym(s))_{\Theta^{\HH}},N^s,\Theta\bigr)_{\HH}
  \geq_b
  \bigl((H\wr\Sym(s))_{\Phi^{\HH}},M^s,\Phi\bigr)_{\HH}.
\]
The relation is afforded by the tensor product of the representations
obtained by applying \eqref{eq:wreath} to $\PP^{\sigma_i}$ and
$(\PP')^{\sigma_i}$.
\end{proposition}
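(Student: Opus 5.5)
The plan is to reduce to Lemma~\ref{lem:products}, Lemma~\ref{lem:repeated} and Lemma~\ref{lem:transport}, and to use Lemma~\ref{lem:criterion}(iii) to split the verification into two parts. The first part, inertia groups and blocks, only sees the ordinary character triple. The second part, the mixed comparison functions, is where the elements permuting the $k$ Galois classes must be handled directly.

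\emph{Step 1: the building blocks.} By Lemma~\ref{lem:transport}(ii), applying $\sigma_i$ to all the data gives
\[
(G,N,\theta_i)_{\HH}\geq_b(H,M,\varphi_i)_{\HH},
\]
afforded by $(\PP^{\sigma_i},(\PP')^{\sigma_i})$ with the same defect group $D$ (Lemma~\ref{lem:galois-blocks}). Then Lemma~\ref{lem:repeated} gives a relation for $\theta_i^{\otimes r}$, afforded by the wreath representations $(\PP^{\sigma_i})^\wr$ and $((\PP')^{\sigma_i})^\wr$ of \eqref{eq:wreath}, with defect group $D^r$. Lemma~\ref{lem:products}, iterated $k$ times, then gives a relation for $\Theta$ and $\Phi$ with common defect group $D^s$. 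Its ambient group is the $\Theta^{\HH}$-stabilizer inside $\prod_i(G\wr\Sym(r))$, and the affording pair is the tensor product $\widehat\PP=\bigotimes_i(\PP^{\sigma_i})^\wr$, $\widehat\PP'=\bigotimes_i((\PP')^{\sigma_i})^\wr$.

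\emph{Step 2: group theory in $G\wr\Sym(s)$.} Write $\widehat G=(G\wr\Sym(s))_{\Theta^{\HH}}$ and $\widehat H=(H\wr\Sym(s))_{\Phi^{\HH}}$. Because the $\theta_i$ are pairwise non-$G$-conjugate, an element $(g_1,\dots,g_s)\pi$ fixing $\Theta$ must preserve each block of $r$ coordinates. Hence
\[
\widehat G_\Theta=\prod_{i}\bigl(G_{\theta_i}\wr\Sym(r)\bigr)=\prod_i\bigl(G_\theta\wr\Sym(r)\bigr),
\]
and similarly $\widehat H_\Phi=\prod_i(H_\varphi\wr\Sym(r))$. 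Here I use $G_{\theta^{\sigma}}=G_\theta$, since the group and Galois actions commute.

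The factorization $\widehat G=N^s\widehat H$ and $N^s\cap\widehat H=M^s$ should follow coordinatewise, as in Lemma~\ref{lem:products}. For $(g_1,\dots,g_s)\pi\in\widehat G$ with $\Theta^{(g_j)\pi}=\Theta^\tau$, each coordinate satisfies $\theta_{i}^{g_j}=\theta_{i'}^{\tau}$, with $\pi$ sending block $i$ to block $i'$. Writing $g_j=n_jh_j$ and using $(H\times\HH)_\theta=(H\times\HH)_\varphi$ (after translating $\theta_i,\theta_{i'}$ back to $\theta$ via $\sigma_i,\sigma_{i'}$), we get $\varphi_i^{h_j}=\varphi_{i'}^\tau$. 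The same computation gives equality of the mixed stabilizers. The centralizer condition $\C_{\widehat G}(N^s)\leq\widehat H$ holds because $\C_G(N)\leq H$ coordinatewise and $\C_{G^s\Sym(s)}(N^s)\leq G^s$. Finally,
\[
\N_{N^s}(D^s)=\N_N(D)^s\leq M^s.
\]

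\emph{Step 3: blocks and factor sets.} Since $\widehat G_\Theta$ and $\widehat H_\Phi$ coincide with the inertia groups from Step~1, the factor-set identity, the central-scalar identity and the block-induction identities of Lemma~\ref{lem:criterion}(iii) on every intermediate group of $\widehat G_\Theta$ are inherited verbatim from Step~1. Only the comparison functions for the larger mixed stabilizer remain.

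\emph{Step 4: comparison functions (the main obstacle).} A general element $a=((g_j)\pi,\tau)$ of the mixed stabilizer may permute the $k$ blocks nontrivially, pairing $\theta_{i}$ with $\theta_{i'}$ through $\tau$. Such elements are not covered by Lemma~\ref{lem:products}.

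I would first factor $a$ as a product of three kinds of elements:
\begin{itemize}
\item an element of the inertia group, whose comparison function is determined by the common factor set;
\item a block-permuting element $(\Delta h,\pi_0)$ with $\pi_0$ a rational permutation of whole blocks;
\item a diagonal-type element whose Galois part satisfies $\sigma_{i}\tau\sigma_{i'}^{-1}\in(H\times\HH)_\theta$ on each block.
\end{itemize}
On block $i$, the comparison identity for $\PP^{\sigma_{i'}}$ under $(h,\tau)$ is obtained by applying the Galois conjugate to \eqref{eq:comparison} for $\PP$ with $(h,\sigma_i\tau\sigma_{i'}^{-1})$. This gives an explicit intertwiner $T^{\sigma_{i'}}$ and a function $\mu^{\sigma_{i'}}$, and the same formula holds on the primed side with $\mu'=\mu|_{H_\varphi}$.

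The global intertwiner is then the tensor product of these block intertwiners, composed with the rational permutation matrix of $\pi_0$. As in Lemma~\ref{lem:repeated}, the permutation matrix commutes with the tensor product of intertwiners. Hence $\widehat\mu_a$ is a product of Galois-conjugated copies of $\mu$, and $\widehat\mu'_a$ is the same product with $\mu'$ in place of $\mu$. These agree on $\widehat H_\Phi$.

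The delicate point is bookkeeping: the choice of $\PP^{\sigma_i}$ rather than arbitrary associated representations is exactly what makes the block-permuting Galois elements act compatibly on both sides. Composing comparison equations as at the end of \cite[Lemma~2.6]{NSV} finishes the argument, and Lemma~\ref{lem:criterion} then gives the relation.
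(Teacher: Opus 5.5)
Your proposal is correct and follows essentially the same route as the paper. The inertia groups split as $\prod_i(G_\theta\wr\Sym(r))$ and $\prod_i(H_\varphi\wr\Sym(r))$, so the factor-set, central-scalar and block conditions are inherited from Lemmas~\ref{lem:transport}, \ref{lem:repeated} and~\ref{lem:products}. After an inertia element is factored out, the intertwiner for a block-permuting mixed element is the rational block permutation composed with a tensor product of Galois-conjugated intertwiners, so both comparison functions are the same product of conjugates of $\mu$ and of $\mu'=\mu|_{H_\varphi}$.

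The only imprecision is in your three-factor decomposition. The block-permuting part generally needs block-dependent coordinates rather than a single $\Delta h$, and it need not lie in the mixed stabilizer on its own. Also, conjugation by the permutation matrix permutes the tensor blocks rather than commuting with them. So the intertwiner should be computed directly for the element left after removing the inertia factor, which is what your final paragraph effectively does.
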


\begin{proof}
The original mixed stabilizer equality implies that the $\varphi_i$
are pairwise nonconjugate under $H$. The two inertia groups are
therefore
\[
  \prod_{i=1}^k(G_\theta\wr\Sym(r)),
  \qquad
  \prod_{i=1}^k(H_\varphi\wr\Sym(r)).
\]
The result in \cite[Theorem~2.7]{NSV} applies to the representations specified
in the statement. Its comparison calculation includes mixed elements
that permute the $k$ sets of $r$ factors. After factoring out an
inertia element, the intertwining matrix is obtained by composing
the permutation of the $k$ tensor blocks with the tensor product of
the intertwining matrices for the diagonal mixed actions on those
blocks. The resulting scalar is a product of Galois conjugates
of the comparison functions in Lemma~\ref{lem:repeated}. The
corresponding local functions yield the same product. Hence these
representations afford the $\geq_c$-relation, including the
comparison-function identities for mixed elements that permute
distinct Galois-conjugate types.

By Lemma~\ref{lem:galois-blocks}, every character factor has defect
group $D$. Thus the product blocks have common defect group $D^s$,
with $\N_{N^s}(D^s)\leq M^s$. Applying the block conclusions of
Lemmas~\ref{lem:repeated} and~\ref{lem:products} to the displayed
inertia groups gives the block-induction identity for the resulting
correspondences on every intermediate group. Lemma~\ref{lem:criterion}
completes the proof.
\end{proof}

\subsection{The block butterfly theorem}

The next result permits a change of ambient group when the induced
automorphism group of the normal subgroup is unchanged. We use the
construction in \cite[Theorem~2.9]{NSV} for the $\geq_c$-relation
and verify the block-induction identity for the resulting
correspondences on every intermediate group.

\begin{lemma}[Block butterfly theorem]\label{lem:butterfly}
Suppose that
\[
  (G,N,\theta)_{\HH}\geq_b(H,M,\varphi)_{\HH},
\]
where $\theta$ and $\varphi$ have height zero and their blocks
have a common defect group $D$. Let $N\unlhd\widehat G$, and let
\[
  \epsilon:G\longrightarrow\Aut(N),
  \qquad
  \widehat\epsilon:\widehat G\longrightarrow\Aut(N)
\]
be the conjugation homomorphisms. Suppose that
$\epsilon(G)=\widehat\epsilon(\widehat G)$, and set
\[
  \widehat H=\widehat\epsilon^{-1}\bigl(\epsilon(H)\bigr).
\]
Then $\widehat H\cap N=M$ and
\[
  (\widehat G,N,\theta)_{\HH}
  \geq_b(\widehat H,M,\varphi)_{\HH}.
\]
The relation is afforded by the pair of projective representations
obtained from the butterfly construction.
\end{lemma}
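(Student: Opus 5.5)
Following the paragraph preceding the statement, I will take the $\geq_c$-part from the butterfly construction of \cite[Theorem~2.9]{NSV}. I will then verify the block conditions with the trace criterion, Lemma~\ref{lem:trace}.

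\emph{Group-theoretic part.} Since $\widehat\epsilon(M)=\epsilon(M)\leq\epsilon(H)$, we have $M\leq\widehat H$. Conversely, let $n\in\widehat H\cap N$. Then $\epsilon(n)=\epsilon(h)$ for some $h\in H$, so $h^{-1}n\in\C_G(N)\leq H$, whence $n\in H\cap N=M$. Next take $\widehat g\in\widehat G$ and write $\widehat\epsilon(\widehat g)=\epsilon(nh)$ with $n\in N$, $h\in H$. Then $n^{-1}\widehat g\in\widehat H$, which gives $\widehat G=N\widehat H$. Also $\C_{\widehat G}(N)=\ker\widehat\epsilon\leq\widehat H$. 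Since $\theta^{\widehat g}$ and $\varphi^{\widehat h}$ depend only on $\widehat\epsilon(\widehat g)$, both triples are $\HH$-triples. Moreover, $\widehat\epsilon(\widehat h)=\epsilon(h)$ with $h\in H$ shows $(\widehat h,\sigma)\in(\widehat H\times\HH)_\theta$ iff $(h,\sigma)\in(H\times\HH)_\theta$. Thus the mixed stabilizer equality transfers. The common defect group $D$ and the inclusion $\N_N(D)\leq M$ are unchanged.

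\emph{Representations.} I will form the butterfly pair as in \cite{NSV}. Set $\widehat G_\theta\ast G_\theta=\{(\widehat x,x)\mid\widehat\epsilon(\widehat x)=\epsilon(x)\}$. Given $\widehat x\in\widehat G_\theta$, choose $x\in G_\theta$ with $\widehat\epsilon(\widehat x)=\epsilon(x)$; choosing $x\in H_\varphi$ whenever $\widehat x\in\widehat H_\varphi$ is possible because $\widehat\epsilon(\widehat x)$ then lies in $\epsilon(H)$. Put $\widehat\PP(\widehat x)=\PP(x)$ and $\widehat\PP'(\widehat x)=\PP'(x)$ for $\widehat x\in\widehat H_\varphi$. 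The choice is ambiguous only up to $\C_G(N)\leq H$. To make both maps well defined up to a function constant on $\C_{\widehat G}(N)N$-cosets, I will use the central-scalar condition of Definition~\ref{def:block}(ii): $\PP(c)$ and $\PP'(c)$ are the same scalar. Following \cite[Theorem~2.9]{NSV}, the factor sets still agree. Comparison functions of elements $(\widehat h,\sigma)$ reduce to those of $(h,\sigma)$, so Definition~\ref{def:block}(iii) transfers. This gives the $\geq_c$-relation.

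\emph{Block part (main obstacle).} The ambient groups change, so Lemma~\ref{lem:transport} does not apply directly, and block induction on intermediate groups $N\leq W\leq\widehat G_\theta$ of $\widehat G$ has no obvious counterpart in $G$. I will use Lemma~\ref{lem:trace}. Because $\theta$ and $\varphi$ have height zero, \eqref{eq:trace} reads
\[
  \bigl(|N|_{p'}\tr\widehat\PP(\widehat x)/\theta(1)_{p'}\bigr)^*
  =\bigl(|M|_{p'}\tr\widehat\PP'(\widehat x)/\varphi(1)_{p'}\bigr)^*
\]
for $p'$-elements $\widehat x\in\widehat H_\varphi$ with $D\in\Syl_p(\C_N(\widehat x))$. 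The real difficulty is that the representative $x\in H_\varphi$ of $\widehat x$ need not be a $p'$-element; it is determined only modulo $\C_G(N)$. Writing $x=x_{p'}x_p$, the element $x_p$ acts on $N$ through a $p'$-automorphism, hence $\epsilon(x_p)=\epsilon(x_{p'})^{j}$ for a suitable power. Adjusting by an element of $\C_G(N)$, I will replace $x$ by a $p'$-element $y\in H_\varphi$ with $\epsilon(y)=\widehat\epsilon(\widehat x)$, so that $\C_N(y)=\C_N(\widehat x)$. The traces of $\widehat\PP(\widehat x)$ and $\PP(y)$ then differ by the same scalar $\PP(c)=\PP'(c)$ on both sides, with $c\in\C_G(N)$. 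Since factor sets take root-of-unity values, this scalar is a unit, and it is the same on the two sides. So \eqref{eq:trace} for $\widehat x$ follows from \eqref{eq:trace} for $y$, which holds by Lemma~\ref{lem:trace} applied to the original relation. Lemma~\ref{lem:trace} then yields the $\geq_b$-relation. The height-zero hypothesis is used here to keep the normalizations in \eqref{eq:trace} independent of the ambient group.
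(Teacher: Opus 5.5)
Your proposal is correct and follows essentially the same route as the paper. The common steps are: transferring the group factorization and the mixed stabilizers through the automorphisms induced on $N$; taking the butterfly pair from \cite[Theorem~2.9]{NSV}, which the paper makes explicit with a common transversal and a scalar function $\widehat\lambda$ on $\C_{\widehat G}(N)$, so that a common root of unity $u(x,y)$ relates $\widehat\PP,\widehat\PP'$ to $\PP,\PP'$; and applying the trace criterion at a $p'$-element $y\in H_\varphi$ that induces the same automorphism as $\widehat x$.

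Two small points are only cosmetic. First, $\epsilon(x_p)$ is simply trivial, since it is a $p$-element of the $p'$-group $\langle\widehat\epsilon(\widehat x)\rangle$. Second, the normalizing constants in \eqref{eq:trace} depend only on $N,M,\theta,\varphi$, so height zero is not what makes them independent of the ambient group.
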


\begin{proof}
Since $\C_G(N)\leq H$, we have
$\epsilon(H)\cap\Inn(N)=\epsilon(M)$. It follows that
\[
  \widehat H\cap N=M,\qquad
  \widehat G=N\widehat H,\qquad
  \C_{\widehat G}(N)\leq\widehat H.
\]
The required equality of mixed stabilizers follows because the
actions on the characters of the normal subgroups depend only on
the automorphisms induced on $N$.

Let $(\PP,\PP')$ afford the original relation. Choose a transversal
$T$ for $M\C_G(N)$ in $H_\varphi$ and a corresponding transversal
$\widehat T$ for $M\C_{\widehat G}(N)$ in $\widehat H_\varphi$,
both containing the identity, such that
\[
  \epsilon(t)=\widehat\epsilon(\widehat t)
  \qquad(t\in T,\ \widehat t\in\widehat T\text{ corresponding}).
\]
Let $\lambda$ be the common scalar function of $\PP$ and $\PP'$
on $\C_G(N)$, and let $\omega_\theta$ be the central character
of $\theta$ on $\Z(N)$. Choose a function $\widehat\lambda$ on
$\C_{\widehat G}(N)$, with values in roots of unity, such that
$\widehat\lambda(1)=1$ and
\[
  \widehat\lambda(zc)=\omega_\theta(z)\widehat\lambda(c)
  \qquad(z\in\Z(N),\ c\in\C_{\widehat G}(N)).
\]
Such a function is obtained by prescribing values on representatives
of $\C_{\widehat G}(N)/\Z(N)$ and extending by this identity.
Define
\[
\begin{aligned}
  \widehat\PP(\widehat t n\widehat c)
    &=\PP(t)\PP(n)\widehat\lambda(\widehat c),\\
  \widehat\PP'(\widehat t m\widehat c)
    &=\PP'(t)\PP'(m)\widehat\lambda(\widehat c),
\end{aligned}
\]
where $n\in N$, $m\in M$, and
$\widehat c\in\C_{\widehat G}(N)$. The factorization
$n\widehat c$ is unique up to multiplication by an element of
$\Z(N)$, and the defining identity for $\widehat\lambda$ makes
the first expression independent of this choice. The same argument
applies to the second expression, using the common central scalars.
Multiplication with respect to the chosen transversals gives
associated projective representations whose factor sets agree on
$\widehat H_\varphi$ and whose central scalars coincide, as in
\cite[Theorem~2.9]{NSV}.

We next compare the mixed comparison functions. If
$x\in\widehat H_\varphi$ and $y\in H_\varphi$ induce the same
automorphism of $N$, there is a common root of unity $u(x,y)$ such
that
\begin{equation}\label{eq:butterfly-scalar}
  \widehat\PP(x)=u(x,y)\PP(y),
  \qquad
  \widehat\PP'(x)=u(x,y)\PP'(y).
\end{equation}
Indeed, write $x=\widehat t m\widehat c$. Then $y=tmc$ for some
$c\in\C_G(N)$. If $\alpha$ denotes the original common factor
set on $H_\varphi$, both identities hold with
\[
  u(x,y)=\alpha(tm,c)\widehat\lambda(\widehat c)\lambda(c)^{-1}.
\]
All three factors are roots of unity. For $\lambda(c)$, this
follows from the finite order of $\PP(c)$, which in turn follows
from the root-of-unity values of the factor set. No further
normalization of $\lambda$ is needed.

Let $(\widehat h,\sigma)\in(\widehat H\times\HH)_\theta$,
and choose $h\in H$ such that
$\epsilon(h)=\widehat\epsilon(\widehat h)$. Then
$(h,\sigma)$ belongs to the original mixed stabilizer. For $x,y$
as above, $\widehat h x\widehat h^{-1}$ and $hyh^{-1}$ again
induce the same automorphism of $N$. On each side, the original
intertwining matrix can be used, since the restricted representation and
the induced action on it are unchanged. Substituting
\eqref{eq:butterfly-scalar} into \eqref{eq:comparison} gives
\begin{equation}\label{eq:butterfly-comparison}
  \widehat\mu_{(\widehat h,\sigma)}(x)
  =\frac{u(\widehat h x\widehat h^{-1},hyh^{-1})^\sigma}{u(x,y)}
  \mu_{(h,\sigma)}(y).
\end{equation}
Repeating the calculation for $\PP'$ and its transferred
representation gives the same scalar quotient, with $\mu'$
in place of $\mu$. Thus the two transferred comparison functions
agree on $\widehat H_\varphi$ for every mixed stabilizer element.
This establishes the $\geq_c$-relation.

It remains to prove compatibility with block induction. Let
$x\in\widehat H_\varphi$ be a $p'$-element such that
$D\in\Syl_p(\C_N(x))$. Then $a=\widehat\epsilon(x)$ is a
$p'$-element. Choose $y_0\in H_\varphi$ mapping to $a$.
The $p$-part of $y_0$ maps to the identity, so
$y=(y_0)_{p'}$ is a $p'$-element with $\epsilon(y)=a$.
Consequently,
\[
  \C_N(y)=\C_N(x),\qquad D\in\Syl_p(\C_N(y)).
\]
Since $\theta$ and $\varphi$ have height zero,
Lemma~\ref{lem:trace}, applied to the original pair, gives
\[
  \left(\frac{|N|_{p'}\tr\PP(y)}{\theta(1)_{p'}}\right)^*
  =
  \left(\frac{|M|_{p'}\tr\PP'(y)}{\varphi(1)_{p'}}\right)^*.
\]
Multiplying by the common nonzero residue $u(x,y)^*$ yields the
required trace equality for $\widehat\PP$ and $\widehat\PP'$
at $x$. A second application of Lemma~\ref{lem:trace} gives the
block-induction identity for the associated correspondences on
every intermediate group. The characters of the normal subgroups,
their common defect group $D$, and the inclusion $\N_N(D)\leq M$
remain unchanged.
\end{proof}

\section{Clifford correspondence and relative heights}\label{sec:clifford}

We use the conventions of Section~1. Given a finite collection of
projective representations, we work with the image of $\HH$ in the
Galois group of a sufficiently large finite cyclotomic field containing
their matrix entries and factor-set values. We write
$\PP_1\sim\PP_2$ when there is an invertible intertwining matrix
between them, as in Section~\ref{subsec:projective}.

Throughout this section, the block relation is the one in
Definition~\ref{def:block}. In particular, a single pair of associated
projective representations affords both the mixed comparison functions
and the character correspondences compatible with block induction on
every intermediate group. The corresponding ordinary inertia triples
are therefore block isomorphic in the sense of
\cite[Definition~3.6]{NS}.

\subsection{Inertia groups and Galois orbits}\label{subsec:inertia}

Suppose that
\[
  (G,N,\theta)_{\HH}\geq_b(H,M,\varphi)_{\HH},
  \qquad N\leq J\unlhd G.
\]
Fix a pair $(\PP,\PP')$ affording this relation, the corresponding
family of maps $\tau$, and a common defect group $D$ as in
Definition~\ref{def:block}. Set
\[
  L=J\cap H,\qquad T=J_\theta,\qquad
  U=T\cap H=L_\varphi.
\]
For $\psi\in\Irr(J\mid\theta)$, let
$\eta\in\Irr(T\mid\theta)$ be its Clifford correspondent, and set
$\eta'=\tau_T(\eta)$ and $\psi'=(\eta')^L$.
The resulting transfer theorem is stated in Subsection~2.5, after the
representation and block calculations needed for its proof.

\begin{lemma}[Galois action and defect groups]\label{lem:2-1}
Let $\sigma\in\HH$, let $E$ be a finite group, and let
$b\in\Bl(E)$. Then $b^\sigma$ and $b$ have the same defect groups.
The action of $\sigma$ commutes with every instance of Brauer block
induction used in this section.
\end{lemma}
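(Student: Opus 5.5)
The first assertion is exactly Lemma~\ref{lem:galois-blocks}, applied with $X=E$. So the proof reduces to the second assertion: compatibility of $\sigma$ with Brauer block induction. I would prove it in the same model as Lemma~\ref{lem:galois-blocks}. Take $K=\mathbb Q_p(\zeta_{|E|})$, extend $\sigma$ to an automorphism of $K$ by letting it act as the given power of Frobenius on the $p'$-roots of unity and as prescribed on the $p$-power roots of unity, and let $\overline\sigma$ be the induced automorphism of $k$. This is legitimate because $\sigma\in\HH$, and it agrees with the original action on character values.

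The key step is to work with Brauer's definition of block induction through central characters. Let $F\leq E$ and $c\in\Bl(F)$. The induced block $c^E$ is defined when the linear map
\[
\lambda_c^E:\Z(kE)\to k,\qquad \widehat{C}\mapsto\lambda_c\bigl((\widehat C\cap F)^{\wedge}\bigr),
\]
is an algebra homomorphism. Here $C$ runs over the conjugacy classes of $E$, and $(\widehat C\cap F)^{\wedge}$ denotes the sum of the elements of $C\cap F$. When this holds, $c^E$ is the unique block whose central character $\lambda_{c^E}$ equals $\lambda_c^E$.

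First, for any block $b$ with $\chi\in\Irr(b)$, the central character satisfies
\[
\lambda_{b^\sigma}(\widehat C)=\Bigl(\frac{|C|\chi^\sigma(g)}{\chi(1)}\Bigr)^*=\overline\sigma\bigl(\lambda_b(\widehat C)\bigr),
\]
because $\chi^\sigma(1)=\chi(1)$ and $\sigma$ preserves $\OO$ and $J(\OO)$. Thus $\lambda_{b^\sigma}=\overline\sigma\circ\lambda_b$ on class sums.

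Second, the class sums $\widehat C$ and the sets $C\cap F$ are defined over the prime field. Hence $\lambda_{c^\sigma}^E=\overline\sigma\circ\lambda_c^E$ on class sums. Since $\overline\sigma$ is a field automorphism and the class sums have coefficients in $\mathbb F_p$, the map $\lambda_{c^\sigma}^E$ is an algebra homomorphism exactly when $\lambda_c^E$ is. So $(c^\sigma)^E$ is defined if and only if $c^E$ is, and in that case
\[
\lambda_{(c^\sigma)^E}=\overline\sigma\circ\lambda_{c^E}=\lambda_{(c^E)^\sigma},
\]
which gives $(c^\sigma)^E=(c^E)^\sigma$. Brauer correspondence is the special case $F=\N_E(D)$, which recovers Lemma~\ref{lem:galois-blocks}.

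The only step I expect to need care is the first one: constructing $\sigma$ on $K$ so that it preserves $\OO$ and reduces to a field automorphism $\overline\sigma$ of $k$. This is exactly where $\sigma\in\HH$ is used, since the action on $p'$-roots of unity being a power of Frobenius lets $\sigma$ be realized on the unramified part. This construction was already carried out in Lemma~\ref{lem:galois-blocks}, so I would cite it. One further point: a single field must be large enough for all groups involved in a given induction, so I would take $|E|$ to be the order of the largest group, $E$ itself. Transitivity of block induction then covers chains of induction, since $\sigma$ commutes with each step.
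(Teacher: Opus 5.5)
Your proposal is correct and follows essentially the same route as the paper. The first assertion is the Brauer-map argument of Lemma~\ref{lem:galois-blocks} (the paper repeats it rather than citing it), and the second comes from applying $\overline\sigma$ to Brauer's central-character definition of block induction; your check that multiplicativity of $\lambda_c^E$ passes to $\lambda_{c^\sigma}^E$, because the class-sum structure constants lie in $\mathbb F_p$, makes explicit a step the paper only sketches.
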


\begin{proof}
Choose a sufficiently large splitting $p$-modular system
$(K,\OO,k)$ on which $\sigma$ acts, and write $\bar\sigma$ for the
induced automorphism of $k=\OO/J(\OO)$. Such a choice is possible
because $\sigma\in\HH$. The action on coefficients fixes the group
basis and sends the block idempotent $e_b\in\Z(\OO E)$ to
$e_{b^\sigma}$. Thus, for every $p$-subgroup $R\leq E$, the Brauer
map satisfies
\[
  \Br_R(e_{b^\sigma})=\bar\sigma\bigl(\Br_R(e_b)\bigr).
\]
The two Brauer images are nonzero simultaneously. The characterization
of defect groups as maximal subgroups with nonzero Brauer image proves
the first assertion; see also \cite[Section~4, pp.~34--35]{BKY}.

The class-sum projection in the central-character definition of
Brauer block induction also commutes with the action on coefficients.
Applying $\bar\sigma$ to the defining central-character identities
therefore gives the corresponding identities for the Galois-conjugate
blocks. This proves the claimed compatibility whenever block induction
is defined.
\end{proof}

We first establish the group factorizations and character bijections.
The comparison functions for the induced representations are computed
in Subsection~2.3.

\begin{lemma}\label{lem:2-2}
We have $G_\theta\unlhd G$ and $H_\varphi=H_\theta\unlhd H$.
Consequently, $T\unlhd G$ and $U\unlhd H$. Moreover,
\[
  J=NL,\qquad T=NU,\qquad J/T\cong L/U,\qquad
  [J:T]=[L:U].
\]
\end{lemma}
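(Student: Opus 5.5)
The whole argument will come from two group-theoretic facts: $G=NH$ with $M=N\cap H$, and the mixed-stabilizer equality $(H\times\HH)_\theta=(H\times\HH)_\varphi$ from Definition~\ref{def:block}(i).

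\emph{Normality.} First, $G_\theta\unlhd G$. For $g\in G$ we have $\theta^g=\theta^\sigma$ for some $\sigma\in\HH$, because $(G,N,\theta)_{\HH}$ is an $\HH$-triple. Since the group and Galois actions commute,
\[
  G_{\theta^g}=G_{\theta^\sigma}=G_\theta,
\]
so $G_\theta^g=G_{\theta^g}=G_\theta$. The same argument for the $\HH$-triple $(H,M,\varphi)_{\HH}$ gives $H_\varphi\unlhd H$.

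\emph{The equality $H_\theta=H_\varphi$.} For $h\in H$, restrict the mixed-stabilizer equality to elements of the form $(h,1)$. This gives $\theta^h=\theta$ if and only if $\varphi^h=\varphi$, that is, $H_\theta=H_\varphi$. Then $T=J\cap G_\theta$ is an intersection of normal subgroups of $G$, so $T\unlhd G$. Likewise $U=T\cap H=J\cap H\cap G_\theta=L\cap H_\theta=L_\varphi$, using $H_\theta=H_\varphi$. Hence $U=(J\cap H)\cap H_\varphi$ is normal in $H$, since $J\cap H\unlhd H$ and $H_\varphi\unlhd H$.

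\emph{Factorizations.} These follow from Dedekind's modular law, using $N\leq J$ and $N\leq T$:
\[
  J=J\cap NH=N(J\cap H)=NL,\qquad
  T=T\cap NH=N(T\cap H)=NU.
\]

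\emph{The index equality.} Note $U\leq L$ and $N\cap L=N\cap H=M\leq U$, because $M\leq H_\varphi$ and $M\leq N\leq T$. Consider the natural map $L\to J/T$. It is surjective, since
\[
  J=NL=TL.
\]
Its kernel is $L\cap T=U$. Hence $L/U\cong J/T$, and taking orders gives $[J:T]=[L:U]$.

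I do not expect any step to be a serious obstacle. The only point needing care is the normality of $G_\theta$: $\theta$ is not $G$-invariant, and normality holds only because $G$-conjugates of $\theta$ are Galois conjugates, which have the same inertia group since the actions commute.
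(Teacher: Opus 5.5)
Your proof is correct and follows essentially the same route as the paper. You obtain normality of $G_\theta$ and $H_\varphi$ from the commuting group and Galois actions, $H_\theta=H_\varphi$ from the mixed-stabilizer equality, the factorizations from $G=NH$ (via Dedekind's law, where the paper writes $T=NL_\theta$), and $J/T\cong L/U$ from the second isomorphism theorem.
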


\begin{proof}
For $g\in G$, the $\HH$-triple condition gives $\sigma\in\HH$
such that $\theta^g=\theta^\sigma$. Since conjugation commutes with
the Galois action,
\[
  (G_\theta)^g=G_{\theta^g}=G_{\theta^\sigma}=G_\theta.
\]
The same argument applies to $H_\varphi$, while equality of the mixed
stabilizers of $\theta$ and $\varphi$ gives $H_\theta=H_\varphi$.
Hence $T=J\cap G_\theta\unlhd G$ and
$U=L\cap H_\varphi\unlhd H$. Since $G=NH$ and $N\leq J$,
we have $J=N(J\cap H)=NL$. Inner automorphisms induced by $N$
fix $\theta$, so $T=NL_\theta=NU$. The remaining assertions follow
from the second isomorphism theorem.
\end{proof}

\begin{lemma}[Covariance over a fixed constituent]\label{lem:2-3}
If $N\leq W\leq G_\theta$ and
$a=(h,\sigma)\in(H\times\HH)_\theta$, then
\[
  \tau_{W^h}(\zeta^a)=\tau_W(\zeta)^a
  \qquad(\zeta\in\Irr(W\mid\theta)).
\]
In particular, this holds for $W=T$, where $T^h=T$.
\end{lemma}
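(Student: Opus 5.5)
We first make the correspondence $\tau_W$ explicit. Since $W\leq G_\theta=NH_\varphi$, we have $W/N\cong(W\cap H)/M$. Fix a projective representation $\QQ$ of $W/N$ whose inflated factor set cancels that of $\PP_W$. Every $\zeta\in\Irr(W\mid\theta)$ is then afforded by $\QQ_1\otimes\PP_W$ for an irreducible projective representation $\QQ_1$ of $W/N$ with the same factor set as $\QQ$. We set $\tau_W(\zeta)$ to be the character afforded by $\QQ_1\otimes\PP'_{W\cap H}$, with $\QQ_1$ viewed on $(W\cap H)/M$. Now let $a=(h,\sigma)$. The first step is to verify $W^h\leq G_\theta$ and $(W\cap H)^h=W^h\cap H$. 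This uses $h\in H$ and $h\in G_{\theta^{\HH}}$, together with $G_\theta\unlhd G$ from Lemma~\ref{lem:2-2}.

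The second step is to compute a representation affording $\zeta^a$. Put $\zeta^{a}(x)=\sigma(\zeta(hxh^{-1}))$ for $x\in W^h$. Then \eqref{eq:comparison} gives
\[
(\QQ_1\otimes\PP)(hxh^{-1})^\sigma=\QQ_1(hxh^{-1})^\sigma\,\mu_a(x)\,T_a\PP(x)T_a^{-1}.
\]
Hence $\zeta^a$ is afforded, up to conjugation by $I\otimes T_a$, by $\QQ_1^{a}\otimes\PP_{W^h}$. Here $\QQ_1^{a}(x)=\mu_a(x)\QQ_1(hxh^{-1})^\sigma$ is a projective representation of $W^h/N$. It is irreducible, and its factor set is forced to cancel that of $\PP_{W^h}$, because the product is an ordinary representation. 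Doing the same computation on the local side with $\PP'$ and $\mu'_a$ shows that $\tau_W(\zeta)^a$ is afforded by $(\QQ_1')^{a}\otimes\PP'_{W^h\cap H}$, where $(\QQ'_1)^a(y)=\mu'_a(y)\QQ_1(hyh^{-1})^\sigma$ for $y\in W^h\cap H$.

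The third step is to compare the two. By Definition~\ref{def:block}(iii), $\mu'_a=\mu_a|_{H_\varphi}$. Both $\mu_a$ and $\QQ_1$ are constant on $N$-cosets, so the identification $W^h/N\cong(W^h\cap H)/M$ carries $\QQ_1^a$ to $(\QQ'_1)^a$. By the definition of $\tau_{W^h}$, which is independent of the choice of $\QQ$ within its factor-set class, we get $\tau_{W^h}(\zeta^a)=\tau_W(\zeta)^a$.

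The main obstacle is checking that the correspondence is well defined independently of the chosen cancelling $\QQ$, and that the Galois-twisted data are compatible with it. If $\QQ$ is replaced by another representation with the same factor set, both sides change by the same projective equivalence, so it suffices to work with the fixed $\QQ_1$ above. For the final sentence, we take $W=T$: since $T\unlhd G$ by Lemma~\ref{lem:2-2}, we have $T^h=T$.
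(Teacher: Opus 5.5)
Your proposal is correct and follows the paper's proof. Both write $\zeta$ as afforded by a tensor product with $\PP_W$, and both apply the comparison identity \eqref{eq:comparison} for $\PP$ and $\PP'$ to see that $\zeta^a$ and $\tau_W(\zeta)^a$ are afforded by the same twisted factor $x\mapsto\mu_a(x)\QQ_1(hxh^{-1})^\sigma$, tensored with $\PP_{W^h}$ and $\PP'_{W^h\cap H}$ respectively, and then conclude from $\mu'_a=\mu_a|_{H_\varphi}$ that these are the defining representations of $\tau_{W^h}$. Your extra checks are details the paper leaves implicit: that $W^h\leq G_\theta$, the factor set of the twisted factor, and independence of the chosen representative. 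The only fix needed is to insert the missing $\otimes$ in your displayed identity.
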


\begin{proof}
Write $\zeta=\tr(\QQ\otimes\PP_W)$, where $\QQ$ is inflated
from $W/N$ and has factor set $\alpha^{-1}|_{W\times W}$.
Since $\PP^a\sim\mu_a\PP$ and
$(\PP')^a\sim(\mu_a|_{H_\varphi})\PP'$, the transformed
characters are afforded by
\[
  (\mu_a\QQ^a)\otimes\PP_{W^h},\qquad
  (\mu_a\QQ^a)|_{W^h\cap H}\otimes\PP'_{W^h\cap H},
\]
respectively. These are precisely the representations defining the
correspondence on $W^h$ afforded by the fixed pair $(\PP,\PP')$.
This is the tensor calculation in \cite[Lemma~1.9(a)]{NSV}.
\end{proof}

\begin{lemma}[Correction of mixed stabilizers]\label{lem:2-4}
Write
\[
  \Gamma_\psi=(H\times\HH)_\psi,\qquad
  \Gamma_\eta=(H\times\HH)_\eta.
\]
Then
\begin{align}
  (H\times\HH)_\eta&=(H\times\HH)_{\eta'},\label{eq:2-1}\\
  (H\times\HH)_\psi&=(H\times\HH)_{\psi'},\label{eq:2-2}\\
  \Gamma_\psi&=\Gamma_\eta(L\times1)
                 =(L\times1)\Gamma_\eta.\label{eq:2-3}
\end{align}
More precisely, for every $a\in\Gamma_\psi$, there is $l\in L$
such that $a_0=a(l^{-1},1)\in\Gamma_\eta$; this element fixes
$\theta$, $\varphi$, $\eta$, and $\eta'$ simultaneously.
\end{lemma}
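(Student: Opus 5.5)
The plan is to prove \eqref{eq:2-1} first, then the factorization \eqref{eq:2-3} by a Frattini-type correction inside $L$, and finally \eqref{eq:2-2} by combining the two with Clifford induction.

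\emph{Step 1: \eqref{eq:2-1}.} Let $a=(h,\sigma)\in H\times\HH$ fix $\eta$. Since $\eta$ lies over $\theta$, the character $\theta^{a}$ is a constituent of $\eta_N$. Hence $\theta^{a}=\theta^{t}$ for some $t\in T$, and as $T$ fixes $\theta$, we get $\theta^a=\theta$. Thus $a\in(H\times\HH)_\theta=(H\times\HH)_\varphi$. Lemma~\ref{lem:2-3} with $W=T$, using $T^h=T$ from Lemma~\ref{lem:2-2}, gives $\tau_T(\eta^a)=\eta'^a$. Since $\tau_T$ is a bijection $\Irr(T\mid\theta)\to\Irr(U\mid\varphi)$, we have $\eta^a=\eta$ if and only if $\eta'^a=\eta'$, provided $a$ fixes $\theta$. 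Conversely, if $a$ fixes $\eta'$, the same argument with $U$, $M$ and $\varphi$ shows that $a$ fixes $\varphi$, hence $\theta$. So both stabilizers lie in $(H\times\HH)_\theta$, and there they coincide.

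\emph{Step 2: \eqref{eq:2-3}.} Take $a=(h,\sigma)\in\Gamma_\psi$. Then $\theta^a$ lies under $\psi^a=\psi$, so $\theta^a=\theta^j$ for some $j\in J=NL$ (Lemma~\ref{lem:2-2}). Writing $j=nl$ and using that $N$ fixes $\theta$, we may take $j=l\in L$. Set $a_1=a(l^{-1},1)$, so that $a_1$ fixes $\theta$ and $\psi$. Because $H$ normalizes $T$, Clifford's correspondence is equivariant: $\eta^{a_1}$ is the Clifford correspondent of $\psi^{a_1}=\psi$ over $\theta^{a_1}=\theta$, and by uniqueness $\eta^{a_1}=\eta$. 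This gives $a_0=a_1\in\Gamma_\eta$, which by Step 1 also fixes $\eta'$ and $\varphi$. The reverse inclusion is clear, since $\Gamma_\eta$ fixes $\eta^J=\psi$ and $L\times1$ fixes $\psi$. The equality $\Gamma_\eta(L\times1)=(L\times1)\Gamma_\eta$ holds because $\Gamma_\psi$ is a group, so the set is closed under inverses.

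\emph{Step 3: \eqref{eq:2-2}.} The same argument on the local side, with $\psi'=(\eta')^L$, where $\eta'$ is the Clifford correspondent over $\varphi$ (via $U=L_\varphi$), gives $(H\times\HH)_{\psi'}=\Gamma_{\eta'}(L\times1)$. Step 1 gives $\Gamma_{\eta'}=\Gamma_\eta$, and combining with Step 2 yields \eqref{eq:2-2}.

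The delicate step is the Clifford-correspondent uniqueness used in Step 2. One must check that $a_1$ maps $\Irr(T\mid\theta)$ to itself, which holds because $a_1$ fixes $\theta$ and normalizes $T$. One must also check that induction commutes with the mixed action $\eta\mapsto\eta^{h\sigma}$ when $h$ normalizes $T$ and $J$; this is routine. On the local side, $\eta'\in\Irr(U\mid\varphi)$ does induce irreducibly to $L$, because $U=L_\varphi$ is the inertia group of $\varphi$ in $L$.
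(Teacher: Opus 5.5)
Your proof is correct and takes the same approach as the paper. For \eqref{eq:2-1} you use restriction to $N$ and $M$, Lemma~\ref{lem:2-3}, and injectivity of $\tau_T$; for the remaining assertions you correct $a$ by an $l\in L$ with $\theta^a=\theta^l$ and invoke uniqueness of the Clifford correspondent. The differences are cosmetic: you get \eqref{eq:2-2} from the local factorization $(H\times\HH)_{\psi'}=(H\times\HH)_{\eta'}(L\times1)$ combined with \eqref{eq:2-1}, where the paper checks both inclusions elementwise, and you obtain the reversed product from $\Gamma_\psi$ being closed under inverses rather than from $L\unlhd H$.
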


\begin{proof}
The restrictions satisfy $\eta_N=e\theta$ and $\eta'_M=e\varphi$
for the same positive integer $e$. Thus any mixed element fixing
$\eta$ also fixes $\theta$. Equality of the mixed stabilizers of
$\theta$ and $\varphi$, together with Lemma~\ref{lem:2-3}, shows
that it fixes $\eta'$. Conversely, an element fixing $\eta'$ fixes
$\varphi$, hence $\theta$, and injectivity of $\tau_T$ then shows
that it fixes $\eta$. This proves \eqref{eq:2-1}.

Let $a\in\Gamma_\psi$. By Clifford theory, the distinct irreducible
constituents of $\psi_N$ are the characters in
$\{\theta^j\mid j\in J\}$. Hence $\theta^a=\theta^j$ for some
$j\in J$. Write $j=nl$ with $n\in N$ and $l\in L$; then
$\theta^a=\theta^l$. The element $a_0=a(l^{-1},1)$ fixes
$\theta$ and, since $l\in J$, also fixes $\psi$. As $T\unlhd G$,
the character $\eta^{a_0}$ is the Clifford correspondent of $\psi$
over $\theta$. Uniqueness gives $\eta^{a_0}=\eta$. Equation
\eqref{eq:2-1} now gives $(\eta')^{a_0}=\eta'$, and induction yields
$(\psi')^{a_0}=\psi'$. Inner automorphisms induced by $L$ fix
$\psi'$, so $(\psi')^a=\psi'$.

Conversely, suppose that $(\psi')^a=\psi'$. Choose $l\in L$ such
that $\varphi^a=\varphi^l$. Then $a_0=a(l^{-1},1)$ fixes
$\varphi$ and $\psi'$, and hence their Clifford correspondent
$\eta'$. By \eqref{eq:2-1}, it also fixes $\eta$, so $a$ fixes
$\psi$. This proves \eqref{eq:2-2}. The correction just constructed
gives $\Gamma_\psi=\Gamma_\eta(L\times1)$. Since $L\unlhd H$,
the product may also be written in the reverse order.
\end{proof}

\begin{lemma}[Groups in the induction step]\label{lem:2-5}
Set
\[
  K=G_\psi,\qquad K_1=G_\eta,\qquad
  V=H_{\psi'},\qquad V_1=H_{\eta'}.
\]
Then
\begin{align}
  K_1&=G_{\psi,\theta},&
  V_1&=H_{\psi',\varphi},&
  K_1\cap H&=V_1,& K\cap H&=V,\label{eq:2-4}\\
  K&=JK_1=NV,& V&=LV_1,&
  K_1\cap J&=T,& V_1\cap L&=U.\label{eq:2-5}
\end{align}
Thus a right transversal for $U$ in $L$ is also a right transversal
for $K_1$ in $K$ and for $V_1$ in $V$.
If $X=G_{\psi^{\HH}}$ and $Y=H_{(\psi')^{\HH}}$, then
\[
  X=NY=JY,\qquad X\cap H=Y,\qquad
  J\cap Y=L,\qquad \C_X(J)\leq V.
\]
\end{lemma}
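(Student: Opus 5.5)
We prove the group-theoretic identities of Lemma~\ref{lem:2-5} directly from Lemmas~\ref{lem:2-2} and~\ref{lem:2-4}, together with uniqueness in the Clifford correspondence. The argument runs in four steps.

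\textbf{Step 1: inertia groups of the Clifford correspondents.} We first show $K_1=G_{\psi,\theta}$. Since $T\unlhd G$ (Lemma~\ref{lem:2-2}), an element $g\in G_{\psi,\theta}$ sends $\eta$ to a character of $T$ lying over $\theta$ and inducing $\psi$. By uniqueness of Clifford correspondents, $\eta^g=\eta$. Conversely, if $g$ fixes $\eta$, then $g$ fixes $\eta^J=\psi$. Also $g$ fixes $\theta$, because $\eta_N=e\theta$. The same argument on the local side, using $U\unlhd H$, gives $V_1=H_{\psi',\varphi}$.

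The inclusion $K_1\cap H\subseteq V_1$ is the $\sigma=1$ case of \eqref{eq:2-1}. The reverse inclusion also follows from \eqref{eq:2-1}, since $V_1\leq H$ fixes $\eta'$ and hence $\eta$. Likewise \eqref{eq:2-2} with $\sigma=1$ gives $K\cap H=H_\psi=H_{\psi'}=V$.

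\textbf{Step 2: factorizations.} The Frattini-type argument of Lemma~\ref{lem:2-4} gives $K=JK_1$. Take $g\in K$; then $\theta^g$ is a constituent of $\psi_N$, so $\theta^g=\theta^j$ for some $j\in J$, and $gj^{-1}\in K_1$. The same argument on the local side gives $V=LV_1$.

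Next, $K=NV$. Write $g\in K$ as $g=nh$ with $n\in N$ and $h\in H$. Since $N\leq J$ fixes $\psi$, we get $h\in K\cap H=V$.

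For the intersections, $K_1\cap J=J_\theta=T$, because every element of $J$ fixes $\psi$. Similarly $V_1\cap L=L_\varphi=U$.

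These facts give the transversal claim. We have $K/K_1\cong J/(J\cap K_1)=J/T$ and $V/V_1\cong L/U$. By Lemma~\ref{lem:2-2}, $L\cap T=U$ and $J=TL$, so a right transversal for $U$ in $L$ is one for $T$ in $J$. Hence it is also a transversal for $K_1$ in $K=JK_1$, and likewise for $V_1$ in $V$.

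\textbf{Step 3: the Galois-orbit stabilizers.} Here we use the full mixed statement \eqref{eq:2-2}. Take $h\in H$. Then $h\in X$ if and only if $(h,\sigma)\in(H\times\HH)_\psi$ for some $\sigma$, which holds if and only if $(h,\sigma)$ fixes $\psi'$, i.e.\ $h\in Y$. Thus $X\cap H=Y$.

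Writing $x\in X$ as $x=nh$ with $N\leq J$ fixing $\psi$, we get $h\in X\cap H=Y$, so $X=NY$. Since $N\leq J\leq X$, this also gives $X=JY$.

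Finally, $J\cap Y=J\cap X\cap H=J\cap H=L$, because $J\leq X$.

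\textbf{Step 4: the centralizer condition.} Since $N\leq J$, we have $\C_X(J)\leq\C_G(N)\leq H$. An element of $\C_X(J)$ fixes $\psi$, so it lies in $K\cap H=V$.

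The only step needing care is Step 3, where fixing a Galois orbit must be converted into a mixed stabilizer element. This is exactly what \eqref{eq:2-2} supplies. Everything else is routine Clifford theory.
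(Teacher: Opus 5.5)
Your proof is correct and follows essentially the same route as the paper. Clifford uniqueness gives $K_1=G_{\psi,\theta}$ and $V_1=H_{\psi',\varphi}$, and the trivial-Galois cases of \eqref{eq:2-1} and \eqref{eq:2-2} give the intersections with $H$. Correcting by elements of $J$ (resp.\ $L$) gives the factorizations and the transversal claim, and \eqref{eq:2-2} together with $G=NH$ and $N\leq J\leq X$ handles $X$, $Y$ and $\C_X(J)$; the only difference is that you write out the conversion of $h\in X\cap H$ into a mixed stabilizer element, which the paper leaves implicit.
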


\begin{proof}
An element fixing $\eta$ fixes $\eta^J=\psi$ and
$\eta_N/e=\theta$. Conversely, an element fixing $\psi$ and
$\theta$ fixes $\eta$ by uniqueness of the Clifford correspondent.
Thus $K_1=G_{\psi,\theta}$, and the same argument applies locally.
Taking trivial Galois components in \eqref{eq:2-1} and
\eqref{eq:2-2} gives the asserted intersections with $H$.

For $g\in K$, the character $\theta^g$ is a constituent of
$\psi_N$. Multiplying $g$ by a suitable element of $J$ therefore
produces an element fixing $\theta$, so $K=JK_1$. The local argument
gives $V=LV_1$. Also, $K_1\cap J=J_\theta=T$ and $V_1\cap L=U$.
Since $G=NH$ and $N\leq J\leq K$, we have
$K=N(K\cap H)=NV$. Together with $J=NL$ and $N\leq K_1$,
these equalities give
\[
  K=K_1L,\qquad V=V_1L,\qquad K_1\cap L=V_1\cap L=U,
\]
which proves the transversal assertion.

Similarly, $X=N(X\cap H)$, and \eqref{eq:2-2} gives $X\cap H=Y$.
Inner automorphisms induced by $J$ fix $\psi$, and those induced by
$L$ fix $\psi'$. Hence $J\cap Y=L$ and $X=JY$. Finally,
$\C_X(J)\leq\C_G(N)\leq H$, and every element of $\C_X(J)$
fixes $\psi$. Thus $\C_X(J)\leq V$.
\end{proof}

\begin{proposition}[Gluing the character fibers]\label{prop:2-6}
The assertion in Theorem~\ref{thm:2-19}\textup{(a)} holds.
\end{proposition}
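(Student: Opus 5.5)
We need to guess that Theorem 2.19(a) asserts: the map $\Omega_J$ defined by $\psi\mapsto\psi'=(\tau_T(\eta))^L$ on $\Irr(J\mid\theta)$, extended to $\Irr(J\mid\theta^{\HH})$ via simultaneous Galois transforms, is a well-defined $H\times\HH$-equivariant bijection onto $\Irr(L\mid\varphi^{\HH})$.

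The plan is to build $\Omega_J$ first on a single fiber and then spread it over the Galois orbit. On the fiber $\Irr(J\mid\theta)$, compose three bijections. The first is the Clifford correspondence $\Irr(T\mid\theta)\to\Irr(J\mid\theta)$, $\eta\mapsto\eta^J$. The second is $\tau_T:\Irr(T\mid\theta)\to\Irr(U\mid\varphi)$ from Definition~\ref{def:block}(iv); this is a bijection because the quotient identification $T/N\cong U/M$ of Lemma~\ref{lem:2-2} matches the projective representations of the common quotient. The third is the Clifford correspondence $\Irr(U\mid\varphi)\to\Irr(L\mid\varphi)$, $\eta'\mapsto(\eta')^L$, valid since $U=L_\varphi$. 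This gives a bijection $\Omega_{J,\theta}:\Irr(J\mid\theta)\to\Irr(L\mid\varphi)$, $\psi\mapsto\psi'$.

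Next I treat the Galois transforms. For $\sigma\in\HH$, Lemma~\ref{lem:transport}(ii) says that the pair $(\PP^\sigma,(\PP')^\sigma)$ affords the relation for $\theta^\sigma$ and $\varphi^\sigma$, with correspondences $\tau^\sigma$. I therefore define $\Omega_{J,\theta^\sigma}(\psi^\sigma)=\Omega_{J,\theta}(\psi)^\sigma$. To see that this is well defined, suppose $\theta^\sigma=\theta^{\sigma'}$. Then $\sigma'\sigma^{-1}\in\HH_\theta=\HH_\varphi$, and Lemma~\ref{lem:2-3} with $h=1$ shows that $\tau_T$ is $\HH_\theta$-equivariant. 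Clifford induction commutes with Galois action, so the two definitions agree.

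I then glue over $J$-conjugates and handle overlaps. The fibers $\Irr(J\mid\theta^\sigma)$ and $\Irr(J\mid\theta^{\sigma'})$ are either equal or disjoint. They are equal exactly when $\theta^{\sigma'}=\theta^{\sigma j}$ for some $j\in J$, and we may take $j=l\in L$ since $J=NL$. By the equality of mixed stabilizers in Definition~\ref{def:block}(i), this holds exactly when $\varphi^{\sigma'}=\varphi^{\sigma l}$, that is, exactly when the local fibers coincide. On a coincident fiber the two definitions must agree, which amounts to compatibility under the mixed element $(l,\sigma'\sigma^{-1})$. This compatibility is Lemma~\ref{lem:2-3} applied to the element $a_0$ obtained by the correction in Lemma~\ref{lem:2-4}, combined with the fact that inner automorphisms by $l\in L\leq J$ fix $\psi$ and $\psi'$. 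Since fibers match fibers bijectively and $\Omega_J$ is a bijection on each fiber, $\Omega_J$ is a bijection onto $\Irr(L\mid\varphi^{\HH})=\bigcup_\sigma\Irr(L\mid\varphi^\sigma)$.

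For $H\times\HH$-equivariance, take $(h,\sigma)$ and $\psi\in\Irr(J\mid\theta)$. Because $(H\times\HH)$ acts on $\theta$ through $\theta^{\HH}$, there is $\rho\in\HH$ with $\theta^{h}=\theta^{\rho}$. Then $(h,\rho^{-1})$ lies in the mixed stabilizer, and $T$ and $U$ are normal by Lemma~\ref{lem:2-2}. Lemma~\ref{lem:2-3} gives $\tau_T(\eta^{(h,\rho^{-1})})=\tau_T(\eta)^{(h,\rho^{-1})}$, and conjugation commutes with Clifford induction. Combining this with the Galois definition yields $\Omega_J(\psi^{h\sigma})=\Omega_J(\psi)^{h\sigma}$.

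The main obstacle is the well-definedness check on overlapping fibers. It requires that a mixed element relating two labellings can be corrected by $L$ into the stabilizer of $\theta$, so that Lemma~\ref{lem:2-3} applies with the same fixed pair $(\PP,\PP')$. This is exactly the content of Lemma~\ref{lem:2-4}, which I would invoke in the form $\Gamma_\psi=\Gamma_\eta(L\times1)$.
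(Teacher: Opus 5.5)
Your proposal is correct and follows essentially the paper's route. You compose the two Clifford correspondences with $\tau_T$ on a single fiber, spread the map by transforms of the fixed pair, and check independence of choices via Lemma~\ref{lem:2-3} together with the fact that $L$ acts trivially on $\psi$ and $\psi'$. Your equivariance computation is the explicit version of what the paper gets from its construction by $H\times\HH$-transports.

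One small repair is needed in the overlap check. If $\theta^{\sigma'}=\theta^{\sigma l}$, the element fixing $\theta$ is $(l,\sigma\sigma'^{-1})$, not $(l,\sigma'\sigma^{-1})$. This element generally moves $\psi$ within the fiber, so the form $\Gamma_\psi=\Gamma_\eta(L\times1)$ of Lemma~\ref{lem:2-4} does not apply to it. What you need, and what the paper uses, is only that it lies in $(H\times\HH)_\theta$, so Lemma~\ref{lem:2-3} applies to it directly.
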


\begin{proof}
The groups $T=J_\theta$ and $U=L_\varphi$ are the relevant inertia
groups. Clifford induction on each side and the map $\tau_T$ are
bijections. Their composite
$\eta^J\mapsto\tau_T(\eta)^L$ is therefore a bijection between
the specified fibers.

Set $\Theta=\theta^{\HH}$ and $\Phi=\varphi^{\HH}$. Equality of
the mixed stabilizers of $\theta$ and $\varphi$ shows that
\[
  \theta^{h\sigma}\longmapsto\varphi^{h\sigma}
\]
defines an $H\times\HH$-equivariant bijection
$\Theta\to\Phi$. Since $J=NL$, the $J$-orbits on $\Theta$ are
its $L$-orbits, and this bijection matches them with the $L$-orbits on
$\Phi$. Choose a representative from each $L$-orbit on $\Theta$, transport
$(\PP,\PP')$ simultaneously to the corresponding pair of
constituents, and apply the Clifford construction above.

We verify that the resulting maps are independent of these choices.
If $a=(h,\sigma)$ sends a chosen constituent to another one, use
$(\PP^a,(\PP')^a)$ on the two sides; the tensor correspondences
are then transformed simultaneously by $a$. Two choices leading to
the same constituent differ by an element of its mixed stabilizer,
so Lemma~\ref{lem:2-3} shows that they give the same correspondence.
If the constituents differ by $l\in L$, their Clifford correspondents
are $\eta^l$ and $(\eta')^l$, whose inductions give the same
$J$-character and $L$-character, respectively. Thus the maps do not
depend on the orbit representatives.

By Clifford theory, the fibers belonging to distinct constituent
orbits are disjoint and exhaust the required character sets. The maps
therefore combine to a bijection $\Omega_J$. The construction by
simultaneous transforms makes $\Omega_J$ $H\times\HH$-equivariant.
Its uniqueness is relative to the fixed pair $(\PP,\PP')$ and its
simultaneous transforms.
\end{proof}

\subsection{Homogeneous restriction}\label{subsec:homogeneous}

We first prove the transfer theorem when $\theta$ is invariant
under the normal intermediate subgroup $J$. The hypothesis includes
the full ordinary block isomorphism, with its common defect group and
normalizer condition, together with equality of the mixed comparison
functions for the same pair of associated representations.

\begin{theorem}[Transfer under homogeneous restriction]\label{thm:2-7}
Suppose that
\[
  (G,N,\theta)_{\HH}\geq_b(H,M,\varphi)_{\HH},
  \qquad M=N\cap H,
\]
and fix a pair of projective representations
\[
  \PP:G_\theta\longrightarrow\GL_{\theta(1)}(\Qab),\qquad
  \PP':H_\varphi\longrightarrow\GL_{\varphi(1)}(\Qab)
\]
affording this relation. Denote the corresponding intermediate-group
maps by $\tau_U$. Let
\[
  N\leq J\unlhd G,\qquad J\leq G_\theta,\qquad L=J\cap H.
\]
For $\psi\in\Irr(J\mid\theta)$, set
$\psi'=\tau_J(\psi)\in\Irr(L\mid\varphi)$. Then
\begin{equation}\label{eq:2-6}
  (G_{\psi^{\HH}},J,\psi)_{\HH}
  \geq_b(H_{(\psi')^{\HH}},L,\psi')_{\HH}.
\end{equation}
The blocks $\bl(\psi)$ and $\bl(\psi')$ have a common defect
group, and
\begin{equation}\label{eq:2-7}
  \htc(\psi)-\htc(\theta)=\htc(\psi')-\htc(\varphi).
\end{equation}
In particular, if $\theta$ and $\varphi$ have height zero, then
$\psi$ has height zero if and only if $\psi'$ does.
\end{theorem}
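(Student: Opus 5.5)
Since $J\leq G_\theta$, the character $\psi$ is afforded by $\QQ\otimes\PP_J$ for a unique irreducible projective representation $\QQ$ of $J/N$ with factor set $\alpha^{-1}$. Then $\psi'$ is afforded by $\QQ|_{L}\otimes\PP'_L$, using $J/N\cong L/M$. My plan is to build the new pair of associated projective representations for $(G_\psi,J,\psi)$ and $(H_{\psi'},L,\psi')$ from the old pair by the standard tensor construction of Navarro--Späth \cite{NS}. Then I check conditions (i)--(iv) of Definition~\ref{def:block}, using Lemma~\ref{lem:criterion}(iii) so that the ordinary block part and the mixed comparison part can be verified separately.

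\textbf{Group conditions.} Lemma~\ref{lem:2-5} applies with $T=J$, $U=L$, $\eta=\psi$, and gives
\[
X=NY=JY,\qquad X\cap H=Y,\qquad J\cap Y=L,\qquad \C_X(J)\leq V.
\]
Lemma~\ref{lem:2-4} gives $(H\times\HH)_\psi=(H\times\HH)_{\psi'}$; in this homogeneous case, every element of the mixed stabilizer of $\psi$ already fixes $\theta$.

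\textbf{Representations and comparison functions.} Set $K=G_\psi\leq G_\theta$ (since $\theta$ is the unique constituent) and $V=H_{\psi'}$. I extend $\QQ$ to a projective representation $\widehat\QQ$ of $K/N$ with a suitable factor set; this follows the construction in \cite[Theorem~3.10 or Corollary~4.2]{NS}, where $\QQ$ is $K$-invariant because $\psi$ is. I then define
\[
\widehat\PP=\widehat\QQ\otimes\PP_K,\qquad \widehat\PP'=\widehat\QQ|_V\otimes\PP'_V.
\]
These are associated with $\psi$ and $\psi'$, and their factor sets agree on $V$. Central scalars on $\C_X(J)\leq\C_G(N)$ agree because they agree for $\PP,\PP'$ and $\widehat\QQ$ is common. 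For $a=(h,\sigma)$ in the mixed stabilizer of $\psi$, the element $a$ fixes $\theta$. By \eqref{eq:comparison}, $\PP^a\sim\mu_a\PP$, and Lemma~\ref{lem:2-3} gives $(\mu_a\QQ^a)\sim\QQ$ on $J/N$. Extending this to $K/N$ produces a function $\nu_a$ with $\widehat\QQ^a\sim\nu_a\mu_a^{-1}\widehat\QQ$. Tensoring then yields the same comparison function $\nu_a$ for $\widehat\PP$ and for $\widehat\PP'$, since the factor $\mu_a$ restricts to $\mu'_a$ on $H_\varphi$. Hence (iii) holds.

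\textbf{Block conditions, heights and defect groups.} For $J\leq W\leq K$, the correspondence afforded by $(\widehat\PP,\widehat\PP')$ on $\Irr(W\mid\psi)$ is the restriction of $\tau_W$ afforded by $(\PP,\PP')$. This is because $\Irr(W\mid\psi)\subseteq\Irr(W\mid\theta)$, and projective representations of $W/J$ tensor with $\widehat\QQ$ to give those of $W/N$. So the block induction identity is inherited from (iv). For the common defect group, I take a defect group $Q$ of $\bl(\psi')$ and use $\bl(\psi')^J=\bl(\psi)$ to make $Q$ a defect group of $\bl(\psi)$ as well. Block induction gives a defect group of $\bl(\psi)$ containing a conjugate of $Q$. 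The equality of defect groups and the condition $\N_J(Q)\leq L$ should then follow as in \cite[Theorem~4.1/Corollary~4.3]{NS}, where block isomorphic character triples have corresponding characters whose blocks share defect groups. The condition $Q\cap N=D$ holds by Fong-type arguments.

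For the heights, I use $\psi(1)/\theta(1)=\psi'(1)/\varphi(1)=\QQ(1)$ together with the common defect group $Q$ and the relation $|J:L|=|N:M|$. Writing $\htc$ via $\log_p$ of $|J|_p/|Q|$ and $\psi(1)_p$ gives \eqref{eq:2-7}, since $|J|_p/|L|_p=|N|_p/|M|_p$. \textbf{The main obstacle} is the defect-group comparison, namely showing that the defect groups of $\bl(\psi)$ and $\bl(\psi')$ are equal rather than merely contained up to conjugacy. I would first try the trace criterion, Lemma~\ref{lem:trace}, applied to $J$ and $L$, together with the \cite{NS} machinery.
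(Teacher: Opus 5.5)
Your new pair, $\widehat\PP=\widehat\QQ\otimes\PP_K$ and $\widehat\PP'=\widehat\QQ|_V\otimes\PP'_V$, is the paper's $\mathcal D=\RR\otimes\PP$, $\mathcal D'=\RR\otimes\PP'$. Your treatment of the stabilizers, central scalars, intermediate-group correspondences and comparison functions follows the paper. For the comparison step, the clean justification is this: restrict the comparison identity of $\widehat\PP$ to $N$, use Schur's lemma to split its intertwiner as $S_a\otimes B_a$ with $B_a$ the intertwiner for $\PP$, and cancel the second factor.

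\textbf{The gap.} It is the one you flag yourself: you never produce a common defect group $Q$ of $\bl(\psi)$ and $\bl(\psi')$ with $\N_J(Q)\leq L$. This is not peripheral. It is part of the statement, Definition~\ref{def:block}(iv) demands it for \eqref{eq:2-6}, and your derivation of \eqref{eq:2-7} depends on it. The identity $\bl(\psi')^J=\bl(\psi)$ only places a defect group of $\bl(\psi')$ inside a conjugate of a defect group of $\bl(\psi)$. Your proposed fallback also fails. Lemma~\ref{lem:trace} is stated under the common-defect hypotheses of Lemma~\ref{lem:criterion}, so applying it to $(J,L)$ presupposes exactly what you need.

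\textbf{The missing idea} is a Harris--Kn\"orr argument through the common Brauer correspondent.
Since $\N_N(D)\leq M$ and $\bl(\varphi)^N=\bl(\theta)$, the blocks $b=\bl(\theta)$ and $b'=\bl(\varphi)$ have the same Brauer correspondent $c_0$ in $\N_N(D)=\N_M(D)$. As $J\leq G_\theta$, $b$ is $J$-invariant and $b'$ is $L$-invariant. Moreover $R=\N_J(D)\leq L$ by Lemma~\ref{lem:2-13}.

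The Harris--Kn\"orr bijections for $N\unlhd J$ and $M\unlhd L$ both run through blocks of $R$ covering $c_0$. Hence $\bl(\psi')=c^L$ for some $c\in\Bl(R)$, and then $c^J=\bl(\psi')^J=\bl(\psi)$. A defect group $Q\leq R$ of $c$ is therefore a defect group of both blocks. Since $Q\leq R$ normalizes $D$ and $c$ covers $c_0$, you get $Q\cap N=Q\cap\N_N(D)=D$, whence $\N_J(Q)\leq\N_J(D)\leq L$.

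This is Lemma~\ref{lem:2-15} and Proposition~\ref{prop:2-16} with $V=J$. In the proof of this theorem, the paper reaches the same conclusion, together with \eqref{eq:2-7}, by applying \cite[Proposition~3.9(a),(b)]{NS} to the restricted ordinary block isomorphism. Once $Q$ is available, your height computation via $[J:L]=[N:M]$ is correct.
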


\begin{proof}
We first determine the stabilizers and then construct a pair of
projective representations affording \eqref{eq:2-6}. Since
$J\leq G_\theta$, Clifford theory and the fixed tensor correspondence
give
\[
  \psi_N=e\theta,\qquad \psi'_M=e\varphi,\qquad
  e=\frac{\psi(1)}{\theta(1)}=\frac{\psi'(1)}{\varphi(1)}.
\]
Let $a=(h,\sigma)\in H\times\HH$. If $\psi^a=\psi$, restriction
to $N$ gives $\theta^a=\theta$, and equality of the mixed
stabilizers of $\theta$ and $\varphi$ gives
$\varphi^a=\varphi$. Since $J\unlhd G$, covariance for the fixed
pair, as in Lemma~\ref{lem:2-3} and \cite[Lemma~1.9(a)]{NSV}, gives
\[
  \tau_J(\psi^a)=\tau_J(\psi)^a.
\]
Hence $(\psi')^a=\psi'$. Conversely, if $(\psi')^a=\psi'$, then
restriction to $M$ gives $\varphi^a=\varphi$, hence
$\theta^a=\theta$, and injectivity of $\tau_J$ gives $\psi^a=\psi$.
Thus
\begin{equation}\label{eq:2-8}
  (H\times\HH)_\psi=(H\times\HH)_{\psi'}.
\end{equation}
Write
\[
  X=G_{\psi^{\HH}},\qquad Y=H_{(\psi')^{\HH}},\qquad
  T=G_\psi,\qquad V=H_{\psi'}.
\]
Since $G=NH$, $N\leq J$, and inner automorphisms induced by $J$
fix $\psi$, equation~\eqref{eq:2-8} gives
\begin{equation}\label{eq:2-9}
  X=JY,\qquad J\cap Y=L,\qquad T=NV,\qquad V=T\cap H.
\end{equation}
The identity $\psi_N=e\theta$ also gives $T\leq G_\theta$, so all
subsequent evaluations of $\PP$ lie in its domain.

Let $\alpha$ be the factor set of $\PP$. Choose an irreducible
projective representation $\QQ$ of $J/N\cong L/M$, with factor
set $\alpha^{-1}|_{J\times J}$ after inflation, such that
\[
  \psi=\tr(\QQ\otimes\PP_J),\qquad
  \psi'=\tr(\QQ_L\otimes\PP'_L).
\]
By \cite[Theorem~1.1]{NSV}, its entries may be taken in a finite
cyclotomic field. Choose a projective representation $\mathcal D$
associated with $(T,J,\psi)$ such that
$\mathcal D_J=\QQ\otimes\PP_J$, and denote its factor set by
$\beta$. We may choose $\mathcal D$ with entries in a cyclotomic
field and with $\beta$ taking values in roots of unity. Indeed, start
with such an associated representation and conjugate it over a
sufficiently large cyclotomic field so that its restriction to $J$ is
the prescribed representation $\QQ\otimes\PP_J$. A conjugating
matrix exists because the restrictions afford the same irreducible
character.

For $t\in T$, the matrices $\mathcal D(t)$ and
$I_e\otimes\PP(t)$ implement the same conjugation on $N$.
Their quotient therefore centralizes $I_e\otimes\PP(N)$. Since
$\PP_N$ is irreducible, there is a unique matrix $\RR(t)$ such that
\begin{equation}\label{eq:2-10}
  \mathcal D(t)=\RR(t)\otimes\PP(t).
\end{equation}
Multiplication shows that $\RR$ has factor set $\beta\alpha^{-1}$.
It is constant on $N$-cosets and satisfies $\RR_J=\QQ$. Define
\[
  \mathcal D'(v)=\RR(v)\otimes\PP'(v)\qquad(v\in V).
\]
The factor set of $\PP'_V$ is $\alpha|_{V\times V}$, so that of
$\mathcal D'$ is $\beta|_{V\times V}$. Its restriction to $L$
affords $\psi'$, and the multiplication conditions for an associated
representation follow from those of $\RR$ and $\PP'$. Thus
$\mathcal D$ and $\mathcal D'$ are associated with the required
upper triples.

Restrict the given ordinary block isomorphism from $G_\theta,H_\varphi$
to $T,V$. This is permitted by $T=NV$, $V=T\cap H$, and
$T\leq G_\theta$. Since $J\unlhd T$ and $\psi$ is $T$-invariant,
the construction above is the tensor construction of
\cite[Proposition~3.9(b)]{NS}. It gives the prescribed
intermediate-group correspondences: if $J\leq U\leq T$ and
\[
  \rho=\tr(\mathcal S\otimes\mathcal D_U)\in\Irr(U\mid\psi),
\]
then its local image is
\[
  \tr(\mathcal S_{U\cap H}\otimes\mathcal D'_{U\cap H})
  =\tau_U(\rho),
\]
because $\mathcal S\otimes\RR_U$ is a projective representation
constant on $N$-cosets. The block-induction identities on all
intermediate groups are therefore inherited from the same original
correspondence. By \cite[Proposition~3.9(a),(b)]{NS}, the upper
blocks have a common defect group $Q_0$ with $\N_J(Q_0)\leq L$,
and the same result gives \eqref{eq:2-7}.

We also have $\C_X(J)\leq\C_G(N)\leq H$. Every element of
$\C_X(J)$ fixes $\psi$, so $\C_X(J)\leq V$. For $x\in\C_X(J)$,
both $\mathcal D(x)$ and $\PP(x)$ are scalar matrices.
Equation~\eqref{eq:2-10} implies that $\RR(x)$ is scalar as well.
Since $\PP(x)$ and $\PP'(x)$ have the same scalar, so do
$\mathcal D(x)$ and $\mathcal D'(x)$. Thus this pair affords the
required ordinary block isomorphism.

It remains to compare the mixed functions. Let
$a=(h,\sigma)\in(Y\times\HH)_\psi$. Equation~\eqref{eq:2-8}
implies that $a$ fixes $\psi'$; homogeneous restriction then shows
that it fixes $\theta$ and $\varphi$. Write
\[
  \begin{aligned}
    \PP^a&=\mu_aB_a^{-1}\PP B_a,&
    (\PP')^a&=\mu'_a(B'_a)^{-1}\PP'B'_a,\\
    \mathcal D^a&=\nu_aA_a^{-1}\mathcal D A_a,&
    \mu'_a&=\mu_a|_{H_\varphi}.
  \end{aligned}
\]
The group component of $a$ normalizes $T$ and $V$. The function
$\nu_a$ is constant on $J$-cosets and equals $1$ on $J$.
Restricting the comparison identity for $\mathcal D$ to $N$, and
using $\mathcal D_N=I_e\otimes\PP_N$ and $\mu_a|_N=1$, shows
that $A_a(I_e\otimes B_a^{-1})$ centralizes $I_e\otimes\PP(N)$.
Schur's lemma gives an invertible matrix $S_a$ such that
\[
  A_a=S_a\otimes B_a.
\]
Substituting this expression into the identity for $\mathcal D^a$
and cancelling the invertible second tensor factor gives
\begin{equation}\label{eq:2-11}
  \RR^a(t)=\frac{\nu_a(t)}{\mu_a(t)}S_a^{-1}\RR(t)S_a
  \qquad(t\in T).
\end{equation}
Consequently, for $v\in V$,
\[
  \begin{aligned}
    (\mathcal D')^a(v)
      &=\RR^a(v)\otimes(\PP')^a(v)\\
      &=\nu_a(v)(S_a\otimes B'_a)^{-1}
        \mathcal D'(v)(S_a\otimes B'_a).
  \end{aligned}
\]
Uniqueness of the normalized comparison function gives
$\nu'_a=\nu_a|_V$. Together with \eqref{eq:2-8}, \eqref{eq:2-9},
and the ordinary block isomorphism established above, this proves
\eqref{eq:2-6}.
\end{proof}

\subsection{Induced representations and mixed comparison functions}

We now construct the representations and intertwining matrices needed
for the nonhomogeneous Clifford step. Recall our convention for the
right action:
\[
  f^{(h,\gamma)}(x)=\gamma\bigl(f(hxh^{-1})\bigr).
\]
For matrix-valued functions, $\gamma$ acts entrywise, and
$(f^a)^b=f^{ab}$. For an associated projective representation, we write
the comparison identity as
\[
  \RR^a(x)=\kappa_a(x)A_a^{-1}\RR(x)A_a.
\]
Thus $A_a^{-1}$ plays the role of the intertwining matrix
$T_a$ in \eqref{eq:comparison}.

\subsubsection{A common transversal}

Assume that
\[
  (G,N,\theta)_{\HH}\geq_b(H,M,\varphi)_{\HH},
  \qquad G=NH,\qquad M=N\cap H.
\]
Let $N\leq J\unlhd G$, and set
\[
  L=J\cap H,\qquad T=J_\theta,\qquad U=T\cap H.
\]
Then $T\unlhd G$ and $U=L_\varphi$. Fix
\[
  \eta\in\Irr(T\mid\theta),\qquad
  \eta'=\tau_T(\eta),\qquad
  \psi=\eta^J,\qquad \psi'=(\eta')^L.
\]
Both induced characters are irreducible. The homogeneous transfer
theorem provides a fixed pair of associated projective representations
\[
  \RR:G_\eta\longrightarrow\GL_{\eta(1)}(F),\qquad
  \RR':H_{\eta'}\longrightarrow\GL_{\eta'(1)}(F),
\]
where $F$ is a sufficiently large finite cyclotomic field. Their factor
sets take values in roots of unity and agree under the natural
identification of the corresponding quotients. This pair affords the
fixed homogeneous correspondences on intermediate groups. Moreover,
\begin{equation}\label{eq:2-12}
  (H\times\HH)_\eta=(H\times\HH)_{\eta'}.
\end{equation}
For each $a$ in this mixed stabilizer, there are invertible matrices
$A_a$ and $A_a'$ such that
\begin{align}
  \RR^a&=\kappa_a A_a^{-1}\RR A_a,
  \label{eq:2-13}\\
  (\RR')^a&=
  \bigl(\kappa_a|_{H_{\eta'}}\bigr)(A_a')^{-1}\RR'A_a'.
  \label{eq:2-14}
\end{align}
The function $\kappa_a$ is constant on $T$-cosets and equals $1$ on $T$.
Write
\[
  K=G_\psi,\qquad K_1=G_\eta=G_{\psi,\theta},\qquad
  V=H_{\psi'},\qquad V_1=H_{\eta'}.
\]
Clifford theory and the homogeneous relation give
\begin{equation}\label{eq:2-15}
  \begin{gathered}
    K=JK_1,\qquad K_1\cap J=T,\qquad V=LV_1,\\
    V_1\cap L=U,\qquad K_1=TV_1.
  \end{gathered}
\end{equation}
The natural maps identify the quotients
\begin{equation}\label{eq:2-16}
  K_1/T\cong K/J\cong V/L\cong V_1/U.
\end{equation}
Let $\alpha_0$ denote the common normalized factor set of $\RR$ and
$\RR'$ on this quotient, and let $\alpha$ be its inflation to $K$.

Choose a right transversal
\[
  r_1=1,r_2,\ldots,r_m\in L
\]
for $U$ in $L$. Since $J=NL$ and $N\leq T$, these elements also form
right transversals for $T$ in $J$, for $K_1$ in $K$, and for $V_1$ in
$V$. Thus the right cosets have the form $Tr_i$, $K_1r_i$, and so on.

\subsubsection{Induced representations and their factor sets}

For $x\in K$, define the block matrix $\mathcal D(x)$, with blocks of
size $\eta(1)$, by
\begin{equation}\label{eq:2-17}
  \mathcal D(x)_{ij}=
  \begin{cases}
    \RR(r_ixr_j^{-1}),&r_ixr_j^{-1}\in K_1,\\
    0,&r_ixr_j^{-1}\notin K_1.
  \end{cases}
\end{equation}
For $x\in V$, use the same representatives to define
\begin{equation}\label{eq:2-18}
  \mathcal D'(x)_{ij}=
  \begin{cases}
    \RR'(r_ixr_j^{-1}),&r_ixr_j^{-1}\in V_1,\\
    0,&r_ixr_j^{-1}\notin V_1.
  \end{cases}
\end{equation}

\begin{lemma}\label{lem:2-8}
The projective representations $\mathcal D$ and $\mathcal D'$ are
associated with $(K,J,\psi)$ and $(V,L,\psi')$, respectively. Their
factor sets are $\alpha$ and $\alpha|_{V\times V}$.
\end{lemma}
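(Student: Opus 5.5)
This is the standard verification that the block-matrix formula defines an induced projective representation; I will carry it out in three steps.

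\emph{Step 1: block structure.} For $x\in K$ and each $i$, there is exactly one $j$ with $r_ixr_j^{-1}\in K_1$, because the $r_j$ form a right transversal for $K_1$ in $K$ (Lemma~\ref{lem:2-5}). So $\mathcal D(x)$ is a block monomial matrix, and its nonzero blocks $\RR(r_ixr_j^{-1})$ are invertible. Hence $\mathcal D(x)$ is invertible. The same holds for $\mathcal D'$, using that the $r_i$ form a transversal for $V_1$ in $V$.

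\emph{Step 2: factor set.} Take $x,y\in K$. The $(i,k)$ block of $\mathcal D(x)\mathcal D(y)$ is nonzero only for the unique $j$ with $r_ixr_j^{-1}\in K_1$ and $r_jyr_k^{-1}\in K_1$. In that case
\[
  \RR(r_ixr_j^{-1})\RR(r_jyr_k^{-1})
  =\alpha(r_ixr_j^{-1},r_jyr_k^{-1})\,\RR(r_ixyr_k^{-1}).
\]
I must show this scalar equals $\alpha(x,y)$, independently of $i,j,k$. The factor set $\alpha$ is inflated from $K/J\cong K_1/T$, and each $r_i$ lies in $L\le J$. So $r_ixr_j^{-1}$ has the same image in $K/J$ as $x$, and $r_jyr_k^{-1}$ has the same image as $y$. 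Under the identification \eqref{eq:2-16}, the factor set of $\RR$ on $K_1$ is the inflation of $\alpha_0$ from $K_1/T$. Its value at elements of $K_1$ therefore depends only on their images in $K_1/T\cong K/J$, and this gives $\alpha(x,y)$. The same argument applies to $\mathcal D'$ via $V_1/U\cong V/L$, which yields $\alpha|_{V\times V}$.

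\emph{Step 3: association.} This is the step that needs the most care. On $J$, the matrix $\mathcal D_J$ is the induced representation $(\RR_T)^J$, and $\RR_T$ affords $\eta$ because $\RR$ is associated with $(K_1,T,\eta)$. So $\mathcal D_J$ affords $\eta^J=\psi$. Similarly $\mathcal D'_L$ affords $(\eta')^L=\psi'$.

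It remains to check $\mathcal D(jx)=\mathcal D(j)\mathcal D(x)$ and $\mathcal D(xj)=\mathcal D(x)\mathcal D(j)$ for $j\in J$ and $x\in K$. By Step 2 this amounts to $\alpha(j,x)=\alpha(x,j)=1$. That holds because $\alpha$ is inflated from $K/J$ and normalized. More directly, the relevant blocks are $\RR(r_ijr_l^{-1})$ with $r_ijr_l^{-1}\in T$, and $\RR$ is associated, so $\RR(tk)=\RR(t)\RR(k)$ for $t\in T$. The local case is identical with $L$, $U$, $V_1$ in place of $J$, $T$, $K_1$.
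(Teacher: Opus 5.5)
Your proposal is correct and follows essentially the same argument as the paper: the block-monomial structure comes from the common transversal, the factor set comes from $r_i\in L\le J$ together with $\alpha$ being inflated from $K/J\cong K_1/T$, and association comes from $\mathcal D_J=(\RR_T)^J$ together with $\alpha(j,x)=\alpha(x,j)=1$ for $j\in J$. The only cosmetic difference is on the local side: the paper notes that for $x\in V$ one has $r_ixr_j^{-1}\in K_1$ if and only if $r_ixr_j^{-1}\in V_1$, so $\mathcal D'$ has the same block pattern, whereas you rerun the argument directly with the transversal for $V_1$ in $V$.
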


\begin{proof}
For each $i$ and $x\in K$, there is a unique $j$ such that
$r_ixr_j^{-1}\in K_1$. Hence $\mathcal D(x)$ is an invertible block
permutation matrix. If $r_ixr_j^{-1}\in K_1$ and
$r_jyr_k^{-1}\in K_1$, then
\[
  \begin{aligned}
    \RR(r_ixr_j^{-1})\RR(r_jyr_k^{-1})
      &=\alpha_0\bigl((r_ixr_j^{-1})T,(r_jyr_k^{-1})T\bigr)
        \RR(r_ixyr_k^{-1})\\
      &=\alpha(x,y)\RR(r_ixyr_k^{-1}).
  \end{aligned}
\]
The last equality follows from $r_i,r_j,r_k\in J$ and the
identifications in \eqref{eq:2-16}. Block multiplication therefore gives
$\mathcal D(x)\mathcal D(y)=\alpha(x,y)\mathcal D(xy)$.
Since $\alpha$ is inflated from $K/J$, it equals $1$ whenever either
argument lies in $J$. The restriction $\mathcal D_J$ is the ordinary
representation induced from $\RR_T$, so it affords $\eta^J=\psi$.
These observations also verify the normalized multiplication
conditions for an associated projective representation.

If $x\in V$, then $r_ixr_j^{-1}\in V$, and this element belongs to
$K_1$ if and only if it belongs to $V_1$. Thus the local matrices have
the same permutation pattern. The same calculation gives the factor
set $\alpha|_{V\times V}$ for $\mathcal D'$, whose restriction to $L$
affords $\psi'$.
\end{proof}

\subsubsection{Adjusting a mixed stabilizer element}

\begin{lemma}\label{lem:2-9}
We have
\[
  (H\times\HH)_\psi=(H\times\HH)_{\psi'}.
\]
For every $a=(h,\gamma)$ in this group, there is an element $l\in L$
such that
\begin{equation}\label{eq:2-19}
  a_0=a(l^{-1},1)=(hl^{-1},\gamma)\in(H\times\HH)_\eta.
\end{equation}
The element $a_0$ also fixes $\theta$, $\varphi$, and $\eta'$.
Its group component normalizes $K$, $K_1$, $V$, $V_1$, $T$, $U$,
$J$, and $L$.
\end{lemma}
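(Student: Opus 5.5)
The first two assertions are a restatement of Lemma~\ref{lem:2-4} in the present notation, so the plan is to cite it and then check the normalizing claims. The equality $(H\times\HH)_\psi=(H\times\HH)_{\psi'}$ is \eqref{eq:2-2}. For $a=(h,\gamma)\in(H\times\HH)_\psi$ I will repeat the correction argument. The distinct constituents of $\psi_N$ form the $J$-orbit of $\theta$, and $\theta^a$ is one of them. Since $J=NL$ and $N$ fixes $\theta$, there is $l\in L$ with $\theta^a=\theta^l$. Then $a_0=a(l^{-1},1)$ fixes $\theta$, and it fixes $\psi$ because $l\in J$. As $T=J_\theta\unlhd G$, the character $\eta^{a_0}$ lies in $\Irr(T\mid\theta)$ and induces to $\psi$. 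Uniqueness of the Clifford correspondent gives $\eta^{a_0}=\eta$, which is \eqref{eq:2-19}.

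Next I show that $a_0$ fixes $\varphi$ and $\eta'$. Because $a_0$ fixes $\theta$ and the mixed stabilizers of $\theta$ and $\varphi$ agree (Definition~\ref{def:block}(i)), $a_0$ fixes $\varphi$. Equation~\eqref{eq:2-1}, equivalently \eqref{eq:2-12}, then gives $(\eta')^{a_0}=\eta'$. That equation rests on Lemma~\ref{lem:2-3} with $W=T$, which needs $T^{h}=T$; this holds by Lemma~\ref{lem:2-2}.

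For the normalizing assertions, write $h_0=hl^{-1}\in H$. The subgroups $J$, $T$ and $L=J\cap H$ are normal in $G$, or in $H$ for $L$, so $h_0$ normalizes them. It normalizes $U=T\cap H$ because $U\unlhd H$ by Lemma~\ref{lem:2-2}. For $K=G_\psi$, write $\psi^{a_0}=\psi$, that is, $\psi^{h_0}=\psi^{\gamma^{-1}}$. Since Galois action commutes with conjugation, $(G_\psi)^{h_0}=G_{\psi^{h_0}}=G_{\psi^{\gamma^{-1}}}=G_\psi$. The same argument applied to $\eta$, $\psi'$ and $\eta'$ handles $K_1=G_\eta$, $V=H_{\psi'}$ and $V_1=H_{\eta'}$.

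The only delicate point is getting the conventions right. With right actions, $(h_0,\gamma)$ fixing $\chi$ means $\chi^{h_0}=\chi^{\gamma^{-1}}$, and a Galois conjugate has the same inertia group. I expect no genuine obstacle beyond checking that $a(l^{-1},1)$ is ordered so that $l$ acts last. This ordering is harmless because $l\in L$ fixes $\psi$, and $L\unlhd H$ lets the correction be placed on either side, as in \eqref{eq:2-3}.
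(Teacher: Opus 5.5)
Your proposal is correct and matches the paper's proof: the same correction by $l\in L$ using $J=NL$, Clifford uniqueness over $\theta$ to get $\eta^{a_0}=\eta$, then \eqref{eq:2-12} for $\eta'$, and commutation of the group and Galois actions for the inertia groups $K,K_1,V,V_1$. The only difference is that you cite \eqref{eq:2-2} from Lemma~\ref{lem:2-4} for the stabilizer equality instead of re-deriving both inclusions as the paper does. That citation is legitimate, since Lemma~\ref{lem:2-4} concerns the same characters, and it also gives that $a_0$ fixes $\psi'$, which your argument for $V$ uses.
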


\begin{proof}
Suppose first that $\psi^a=\psi$. Restriction to $N$ and Clifford
theory show that $\theta^a$ is $J$-conjugate to $\theta$. Since $J=NL$
and inner automorphisms from $N$ fix $\theta$, we may choose $l\in L$
such that $\theta^a=\theta^l$. Then $a_0=a(l^{-1},1)$ fixes $\theta$
and, as $l\in J$, also fixes $\psi$. Induction gives the Clifford
bijection from $\Irr(T\mid\theta)$ to $\Irr(J\mid\theta)$, and $T$ is
normal in $G$. Consequently, $\eta^{a_0}$ and $\eta$ are both the
Clifford correspondent of $\psi$ over $\theta$, so $\eta^{a_0}=\eta$.
By \eqref{eq:2-12}, $a_0$ also fixes $\eta'$. Since induction commutes
with the group and Galois actions, $(\psi')^{a_0}=\psi'$. As $l\in L$,
we obtain $(\psi')^a=\psi'$.

Conversely, suppose that $(\psi')^a=\psi'$. Applying the same argument
to the Clifford constituents over $M$, choose $l\in L$ such that
$a_0=a(l^{-1},1)$ fixes $\varphi$. Equality of the mixed stabilizers
of $\theta$ and $\varphi$ implies that $a_0$ fixes $\theta$.
Uniqueness of the local Clifford correspondent gives
$(\eta')^{a_0}=\eta'$. Equation~\eqref{eq:2-12} then yields
$\eta^{a_0}=\eta$, and induction gives $\psi^a=\psi$.

Because the group and Galois actions commute, the group component of
a mixed element fixing a character normalizes its ordinary inertia
group. This proves the assertion for $K$, $K_1$, $V$, and $V_1$.
The assertions for $J$, $L$, $T$, and $U$ follow from normality and
the fact that $a_0\in H\times\HH$.
\end{proof}

\subsubsection{Intertwining matrices and comparison functions}

\begin{proposition}[Equality of mixed comparison functions]
\label{prop:2-10}
The fixed induced pair $\mathcal D,\mathcal D'$ has equal mixed
comparison functions on the common domain. More precisely, let
$a\in(H\times\HH)_\psi$, and choose $l$ and $a_0$ as in
\eqref{eq:2-19}. For $x\in K$, choose $t_x\in K_1$ with $t_xJ=xJ$.
The comparison function for $\mathcal D$ is given by
\begin{equation}\label{eq:2-20}
  \mu_a(x)=\kappa_{a_0}(t_x).
\end{equation}
This value is independent of both $t_x$ and the choice of $l$.
Moreover, the comparison function $\mu_a'$ for $\mathcal D'$ satisfies
\[
  \mu_a'(x)=\mu_a(x)\qquad(x\in V).
\]
\end{proposition}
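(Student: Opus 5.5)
The plan is to reduce to the adjusted element $a_0$ of Lemma~\ref{lem:2-9}, write down an explicit intertwining matrix for $\mathcal D^{a_0}$ built from the matrices $A_{a_0}$ of \eqref{eq:2-13}, and repeat the same computation on the local side with $A'_{a_0}$ of \eqref{eq:2-14}. The choice of transversal in $L$ is what makes the two calculations run in parallel.

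\emph{Step 1: removal of $l$.} Write $a=a_0(l,1)$ with $l\in L\leq J$. By our convention, $\mathcal D^{a}=(\mathcal D^{a_0})^{(l,1)}$ and $\mathcal D^{(l,1)}(x)=\mathcal D(lxl^{-1})$. By Lemma~\ref{lem:2-8} the factor set $\alpha$ of $\mathcal D$ is inflated from $K/J$, so it equals $1$ whenever one argument lies in $J$. Hence
\[
  \mathcal D(lxl^{-1})=\mathcal D(l)\mathcal D(x)\mathcal D(l)^{-1}
\]
exactly, with comparison function $1$ and intertwiner $\mathcal D(l)$. The same holds for $\mathcal D'$, since $l\in L$. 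Composing comparison identities shows $\mu_a=\mu_{a_0}$ and $\mu'_a=\mu'_{a_0}$. It therefore suffices to treat $a_0=(h,\gamma)\in(H\times\HH)_\eta$.

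\emph{Step 2: the intertwiner for $a_0$.} By Lemma~\ref{lem:2-9}, $h$ normalizes $L$ and $U$, so it permutes the right cosets $Ur_i$. Write
\[
  h^{-1}r_ih=u_ir_{\pi(i)}\qquad(u_i\in U),
\]
for a permutation $\pi$. Then the $(i,j)$ block of $\mathcal D^{a_0}(x)$ is $\RR^{a_0}(u_iyu_j^{-1})$, where $y=r_{\pi(i)}xr_{\pi(j)}^{-1}$. This element lies in $K_1$ exactly when $y$ does, so the block patterns match after applying $\pi$.

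Apply \eqref{eq:2-13}. The function $\kappa_{a_0}$ is constant on $T$-cosets and $u_i\in U\leq T$, and $\alpha$ is trivial on arguments in $J$. This gives
\[
  \RR^{a_0}(u_iyu_j^{-1})
  =\kappa_{a_0}(y)A_{a_0}^{-1}\RR(u_i)\RR(y)\RR(u_j)^{-1}A_{a_0}.
\]
Since $y\equiv x\pmod J$ and $K_1/T\cong K/J$, we have $\kappa_{a_0}(y)=\kappa_{a_0}(t_x)$. Now let $\Pi$ be the block permutation matrix of $\pi$, and let $C=\operatorname{diag}\bigl(\RR(u_i)^{-1}A_{a_0}\bigr)_i$. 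Then
\[
  \mathcal D^{a_0}(x)=\kappa_{a_0}(t_x)\,(\Pi C)^{-1}\mathcal D(x)\,(\Pi C),
\]
up to the ordering convention for $\Pi$. The matrix $\Pi C$ is independent of $x$. Uniqueness of the normalized comparison function in \cite[Lemma~1.4]{NSV} then gives \eqref{eq:2-20}.

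Independence of the choice of $t_x$ follows because $\kappa_{a_0}$ is constant on $T$-cosets and $K_1\cap J=T$. Independence of the choice of $l$ is automatic, since $\mu_a$ is determined by $a$ alone.

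\emph{Step 3: local side.} For $x\in V$, choose $t_x\in V_1$, which is possible by \eqref{eq:2-15}. The same $u_i$, the same $\pi$ and the matrices $\RR'(u_i)^{-1}A'_{a_0}$, together with \eqref{eq:2-14}, give
\[
  \mu'_a(x)=\kappa_{a_0}|_{H_{\eta'}}(t_x)=\mu_a(x).
\]

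The main obstacle is the scalar bookkeeping in Step 2. One must check that no stray factor-set values $\alpha(u_i,\cdot)$ or Galois twists $\gamma(\RR(u_i))$ survive. These are handled by the triviality of $\alpha$ on $J$, and by the identity $\RR^{a_0}(u_i)=A_{a_0}^{-1}\RR(u_i)A_{a_0}$, which holds because $\kappa_{a_0}=1$ on $T$. The remaining care is to get the direction of the permutation right under the right-action convention.
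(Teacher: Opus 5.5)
Your proposal is correct and follows essentially the paper's proof: the same coset decomposition $h^{-1}r_ih=u_ir_{\pi(i)}$, the same block-permutation intertwiner (your $(\Pi C)^{-1}$ is exactly the paper's $B_{a_0}$), the same repetition on the local side, and the same removal of $l$ via $\mathcal D(lxl^{-1})=\mathcal D(l)\mathcal D(x)\mathcal D(l)^{-1}$. The differences are only in ordering. You strip off $l$ first rather than last, and you deduce independence of $l$ from uniqueness of the normalized comparison function, which is the Schur's-lemma argument the paper spells out; when composing, state explicitly that $\mu_{a_0}(lxl^{-1})=\mu_{a_0}(x)$, which holds because $l\in J\unlhd K$.
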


\begin{proof}
First suppose that $a_0=(h_0,\gamma)$ fixes $\eta$. Since $h_0$
normalizes $T$ and $L$, there are a unique permutation $\pi$ and
unique elements $u_i\in U$ such that
\begin{equation}\label{eq:2-21}
  h_0^{-1}r_ih_0=u_ir_{\pi(i)}.
\end{equation}
Indeed, both $h_0^{-1}r_ih_0$ and the representatives lie in $L$,
while the left factor in the coset decomposition lies in $T$.
That factor therefore belongs to $T\cap L=U$.

Write $A=A_{a_0}$, and define the block permutation matrix $B_{a_0}$
by specifying its nonzero blocks:
\begin{equation}\label{eq:2-22}
  (B_{a_0})_{i,\pi(i)}=A^{-1}\RR(u_i).
\end{equation}
Let $\nu_{a_0}:K\longrightarrow F^\times$ be the inflation of
$\kappa_{a_0}$ through $K_1/T\cong K/J$. This function is constant
on $J$-cosets and equals $1$ on $J$. We claim that
\begin{equation}\label{eq:2-23}
  \mathcal D^{a_0}(x)
    =\nu_{a_0}(x)B_{a_0}\mathcal D(x)B_{a_0}^{-1}.
\end{equation}
To verify this identity block by block, use \eqref{eq:2-21} to write
\[
  r_ih_0xh_0^{-1}r_j^{-1}
    =h_0u_i\bigl(r_{\pi(i)}xr_{\pi(j)}^{-1}\bigr)u_j^{-1}h_0^{-1}.
\]
Since $h_0$ normalizes $K_1$ and $u_i,u_j\in T\leq K_1$, the
corresponding induced blocks vanish simultaneously. For a nonzero
block, set $t=r_{\pi(i)}xr_{\pi(j)}^{-1}\in K_1$.
Equation~\eqref{eq:2-13} gives
\[
  \begin{aligned}
    \mathcal D^{a_0}(x)_{ij}
      &=\RR(h_0u_itu_j^{-1}h_0^{-1})^\gamma\\
      &=\kappa_{a_0}(u_itu_j^{-1})A^{-1}\RR(u_itu_j^{-1})A\\
      &=\nu_{a_0}(x)A^{-1}\RR(u_i)
        \mathcal D(x)_{\pi(i),\pi(j)}\RR(u_j)^{-1}A.
  \end{aligned}
\]
The last equality uses that $\kappa_{a_0}$ is constant on $T$-cosets
and that the factor set equals $1$ whenever either argument lies in
$T$. The resulting expression is precisely the $(i,j)$ block on the
right-hand side of \eqref{eq:2-23}.

For the local representation, use the same $\pi$ and $u_i$, and define
\[
  (B_{a_0}')_{i,\pi(i)}=(A_{a_0}')^{-1}\RR'(u_i).
\]
The same block calculation, now using \eqref{eq:2-14}, gives
\begin{equation}\label{eq:2-24}
  (\mathcal D')^{a_0}(x)
    =\nu_{a_0}(x)B_{a_0}'\mathcal D'(x)(B_{a_0}')^{-1}
    \qquad(x\in V).
\end{equation}

Now let $a=a_0(l,1)$ be arbitrary. Since $l\in J$, the normalized
multiplication conditions give
\begin{equation}\label{eq:2-25}
  \mathcal D^{(l,1)}(x)=\mathcal D(lxl^{-1})
    =\mathcal D(l)\mathcal D(x)\mathcal D(l)^{-1}.
\end{equation}
Thus the comparison function for $(l,1)$ is identically $1$.
To combine this identity with \eqref{eq:2-23}, we use the composition
rule for the right action. If
$\mathcal D^a=\mu_aB_a\mathcal D B_a^{-1}$ and
$\mathcal D^b=\mu_bB_b\mathcal D B_b^{-1}$, then
\begin{equation}\label{eq:2-26}
  \mathcal D^{ab}
    =\mu_a^b\mu_b(B_a^{\gamma_b}B_b)
      \mathcal D(B_a^{\gamma_b}B_b)^{-1},
\end{equation}
where $\gamma_b$ denotes the Galois component of $b$. For $b=(l,1)$,
we have $\gamma_b=1$. Moreover,
$\nu_{a_0}(lxl^{-1})=\nu_{a_0}(x)$ because $\nu_{a_0}$ is inflated
from $K/J$ and $l\in J$. Substituting \eqref{eq:2-23} and
\eqref{eq:2-25} into \eqref{eq:2-26} yields
\[
  \mathcal D^a(x)=\nu_{a_0}(x)
  \bigl(B_{a_0}\mathcal D(l)\bigr)\mathcal D(x)
  \bigl(B_{a_0}\mathcal D(l)\bigr)^{-1}.
\]
Similarly, on the local side,
\[
  (\mathcal D')^a(x)=\nu_{a_0}(x)
  \bigl(B_{a_0}'\mathcal D'(l)\bigr)\mathcal D'(x)
  \bigl(B_{a_0}'\mathcal D'(l)\bigr)^{-1}
  \qquad(x\in V).
\]
Here $\mathcal D'(l)$ is defined because $l\in L$.
The scalar functions in these identities are those in
\eqref{eq:2-20}, so they agree on $V$.

It remains to verify independence of the choices. Replacing $t_x$
changes only the representative of its $T$-coset in $K_1$, and hence
does not change $\kappa_{a_0}(t_x)$. Two choices of $l$ give two
intertwining matrices and their comparison functions. Both functions
equal $1$ on $J$, so the two matrices intertwine the
same irreducible representations $\mathcal D_J$ and
$(\mathcal D_J)^a$. By Schur's lemma, they differ by a scalar.
Substitution into the identities on $K$ then shows that the two
comparison functions agree pointwise. The same argument applies
locally.
\end{proof}

\subsubsection{Correspondences on intermediate groups}

\begin{proposition}\label{prop:2-11}
Let $J\leq W\leq K$, and set
\[
  W_1=W\cap K_1,\qquad W'=W\cap V,\qquad W_1'=W_1\cap V_1.
\]
For $\chi\in\Irr(W\mid\psi)$, let
$\chi_1\in\Irr(W_1\mid\eta)$ be its Clifford correspondent.
The correspondence afforded by $\mathcal D,\mathcal D'$ is
\begin{equation}\label{eq:2-27}
  \Sigma_W(\chi)=\bigl(\tau_{W_1}(\chi_1)\bigr)^{W'}.
\end{equation}
Here $\tau_{W_1}$ is the homogeneous correspondence afforded by
$\RR,\RR'$, and hence the appropriate restriction of the original
correspondence afforded by $(\PP,\PP')$.
\end{proposition}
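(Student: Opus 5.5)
The plan is to compute the character of $\mathcal S\otimes\mathcal D_W$ directly from the block description \eqref{eq:2-17} and recognize it as an induced character. Let $\chi\in\Irr(W\mid\psi)$. The tensor construction gives a projective representation $\mathcal S$ of $W/J$ with factor set $\alpha^{-1}|_{W\times W}$ such that $\chi=\tr(\mathcal S\otimes\mathcal D_W)$. By definition, the correspondence afforded by $(\mathcal D,\mathcal D')$ sends $\chi$ to $\tr(\mathcal S_{W'}\otimes\mathcal D'_{W'})$. Here $\mathcal S$ is also regarded as a function on $W'/L$ through $W/J\cong W'/L$, which holds because $W=JW'$ by $K=JV$ and Lemma~\ref{lem:2-5}.

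\emph{Step 1: the global side.} The transversal $r_1,\dots,r_m\in L$ for $U$ in $L$ is also a transversal for $W_1$ in $W$, since $W=JW_1$ and $W_1\cap J=T$. Conjugating the blocks by the $r_i$ shows that $\mathcal S\otimes\mathcal D_W$ is block-monomial. Its $(i,j)$ block is $\mathcal S(x)\otimes\RR(r_ixr_j^{-1})$ whenever $r_ixr_j^{-1}\in W_1$. Because $r_i\in J$ and $\mathcal S$ is inflated from $W/J$, we have $\mathcal S(x)=\mathcal S(r_ixr_j^{-1})$, up to factor-set values. These values equal $1$, because $\alpha$ is inflated from $K/J$ and the $r_i$ lie in $J$. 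Taking traces of the diagonal blocks therefore gives
\[
\chi=\bigl(\tr(\mathcal S_{W_1}\otimes\RR_{W_1})\bigr)^W .
\]
The character $\chi_0=\tr(\mathcal S_{W_1}\otimes\RR_{W_1})$ lies over $\eta$ and is irreducible, since $\RR$ is associated with $(K_1,T,\eta)$. Uniqueness in Clifford's theorem then gives $\chi_0=\chi_1$.

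\emph{Step 2: the local side.} The same transversal works for $W_1'$ in $W'$, because $W'=LW_1'$ and $W_1'\cap L=U$. This uses $V=LV_1$, $V_1\cap L=U$ and $W\cap V_1=W_1\cap V_1$. The identical computation gives
\[
\tr(\mathcal S_{W'}\otimes\mathcal D'_{W'})=\bigl(\tr(\mathcal S_{W_1'}\otimes\RR'_{W_1'})\bigr)^{W'} .
\]
By the definition of the homogeneous correspondence afforded by $(\RR,\RR')$ on the intermediate group $T\le W_1\le K_1$, the inner character is $\tau_{W_1}(\chi_1)$. This uses the identification $W_1/T\cong W_1'/U$, which comes from $K_1=TV_1$. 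Together the two steps give \eqref{eq:2-27}.

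\emph{Step 3: compatibility with $(\PP,\PP')$.} In Theorem~\ref{thm:2-7} we have $\RR=\mathcal R_0\otimes\PP$ by \eqref{eq:2-10} and $\RR'=\mathcal R_0\otimes\PP'$. Hence $\mathcal S\otimes\RR=(\mathcal S\otimes\mathcal R_0)\otimes\PP$, where $\mathcal S\otimes\mathcal R_0$ is constant on $N$-cosets. So $\tau_{W_1}$ is the restriction of the original correspondence, exactly as in the proof of Theorem~\ref{thm:2-7}.

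\emph{Main obstacle.} I expect the main obstacle to be the bookkeeping in Step 1. One has to check that the identifications of $W/J$, $W_1/T$, $W'/L$ and $W_1'/U$ are mutually compatible with the coset map $x\mapsto r_ixr_j^{-1}$. One also has to check that no stray factor-set scalars survive, so that $\mathcal S(x)$ can indeed be replaced by $\mathcal S(r_ixr_j^{-1})$. I would handle this by invoking the normalized multiplication conditions for an associated projective representation, as used in Lemma~\ref{lem:2-8}.
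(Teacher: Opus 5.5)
Your proposal is correct and is essentially the paper's argument. The common transversal $r_1,\dots,r_m\in L\le J$ identifies $\mathcal S\otimes\mathcal D_W$ and $\mathcal S_{W'}\otimes\mathcal D'_{W'}$, after a fixed reordering of the tensor basis, with the representations induced from $\mathcal S_{W_1}\otimes\RR_{W_1}$ and $\mathcal S_{W_1'}\otimes\RR'_{W_1'}$. The only difference is direction: the paper starts from $\chi_1=\tr(\QQ\otimes\RR_{W_1})$ and inflates $\QQ$, whereas you start from $\chi$ and recover $\chi_1$ by Clifford uniqueness.

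Two small cleanups. No factor-set scalars arise in Step~1, since $r_ixr_j^{-1}\in xJ$ and $\mathcal S$ is inflated from $W/J$. Also, the irreducibility of $\chi_0$ follows directly from $\chi_0^W=\chi$, not from $\RR$ being associated with $(K_1,T,\eta)$.
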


\begin{proof}
Equation~\eqref{eq:2-15} gives $W=JW_1$ and $W'=LW_1'$.
Thus the same elements $r_i$ form right transversals for $W_1$ in
$W$ and for $W_1'$ in $W'$. Since $W_1=W_\theta=W_\eta$,
Clifford theory gives the unique correspondent $\chi_1$ satisfying
$\chi_1^W=\chi$. Choose a projective representation $\QQ$ of the
common quotient $W_1/T\cong W/J$, with the restriction of
$\alpha_0^{-1}$ as its factor set, such that
\[
  \chi_1=\tr(\QQ\otimes\RR_{W_1}),\qquad
  \tau_{W_1}(\chi_1)=\tr(\QQ\otimes\RR'_{W_1'}).
\]
Let $\widetilde{\QQ}$ be its inflation from $W/J$ to $W$.
In each nonzero block of the induced matrices, we have
\[
  \QQ(r_ixr_j^{-1})=\widetilde{\QQ}(x),
\]
because $r_i,r_j\in J$. After a fixed permutation of the tensor-factor
basis, the matrices obtained by inducing $\QQ\otimes\RR_{W_1}$ are
therefore $\widetilde{\QQ}\otimes\mathcal D_W$.
The same calculation on the local side gives
$\widetilde{\QQ}_{W'}\otimes\mathcal D'_{W'}$.
Consequently, the triple correspondence defined using the common
factor $\widetilde{\QQ}$ is exactly \eqref{eq:2-27}.
\end{proof}

\subsubsection{Central scalars}

\begin{lemma}\label{lem:2-12}
Let $X=G_{\psi^{\HH}}$. If $c\in\C_X(J)$, then $\mathcal D(c)$
and $\mathcal D'(c)$ are scalar matrices with the same scalar.
\end{lemma}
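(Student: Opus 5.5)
First we place $c$ in the right groups. Lemma~\ref{lem:2-5} gives $\C_X(J)\leq V$, so $c\in V\leq K$, and both $\mathcal D(c)$ and $\mathcal D'(c)$ are defined. Since $c$ centralizes $J$ and each $r_i\in L\leq J$, we have $r_icr_j^{-1}=c\,r_ir_j^{-1}$. Also $c$ fixes $\eta$: it centralizes $T\leq J$, so it acts trivially on $\Irr(T)$. Hence $c\in K_1\cap V=V_1$ by \eqref{eq:2-4}.

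It follows that $r_icr_j^{-1}\in K_1$ if and only if $r_ir_j^{-1}\in K_1\cap J=T$, which happens if and only if $i=j$. So $\mathcal D(c)$ is block diagonal, with $i$th block $\RR(r_icr_i^{-1})=\RR(c)$. Likewise $\mathcal D'(c)$ is block diagonal with blocks $\RR'(c)$.

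It therefore suffices to show that $\RR(c)$ and $\RR'(c)$ are the same scalar matrix. Here $\RR,\RR'$ is the pair affording the homogeneous relation
\[
(G_{\eta^{\HH}},T,\eta)_{\HH}\geq_b(H_{(\eta')^{\HH}},U,\eta')_{\HH}
\]
obtained from Theorem~\ref{thm:2-7} applied with $T$ in place of $J$. Definition~\ref{def:block}(ii) for this relation gives exactly the desired conclusion for elements of $\C_{G_{\eta^{\HH}}}(T)$.

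We check that $c$ lies in this centralizer. Since $c\in K_1=G_\eta\leq G_{\eta^{\HH}}$ and $c$ centralizes $J\supseteq T$, this holds. Alternatively, the scalar property can be seen directly from the proof of Theorem~\ref{thm:2-7}. There $\mathcal D(c)$ is scalar on the upper side because $c$ centralizes $T$ and $\eta$ is $c$-invariant. The factorization $\RR(c)=\RR_0(c)\otimes\PP(c)$ with $\PP(c)$ scalar, together with $\RR'(c)=\RR_0(c)\otimes\PP'(c)$ and the equality of the scalars of $\PP(c)$ and $\PP'(c)$ (which holds because $c\in\C_G(N)$), gives equal scalars.

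The main obstacle is making sure that the pair $\RR,\RR'$ fixed in Subsection~2.3 really is the pair produced by Theorem~\ref{thm:2-7}, including its central-scalar property on $\C_{G_{\eta^{\HH}}}(T)$. The domains are $G_\eta$ and $H_{\eta'}$, and $c$ lies in both. If one prefers not to rely on the statement of that theorem, the tensor-factorization argument from its proof, repeated for $c$, settles it directly.
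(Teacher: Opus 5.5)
Your proposal is correct and matches the paper's proof: you place $c$ in $V_1=K_1\cap V$, use that $c$ commutes with every $r_i\in L\leq J$ to see that $\mathcal D(c)$ and $\mathcal D'(c)$ are block diagonal with blocks $\RR(c)$ and $\RR'(c)$, and then apply the central-scalar condition of the homogeneous relation from Theorem~\ref{thm:2-7} (applied to $T$). Your explicit check that $r_icr_j^{-1}$ lies in $K_1$ (or $V_1$) only for $i=j$, and your alternative tensor-factorization argument, fill in details that the paper leaves implicit.
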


\begin{proof}
Since $N\leq J$, we have $\C_X(J)\leq\C_G(N)\leq H$.
The element $c$ centralizes $J$ and hence fixes $\psi$; it also
centralizes $N$ and hence fixes $\theta$. Thus
$c\in K_1\cap V=V_1$. Each $r_i$ belongs to $J$ and therefore
commutes with $c$. It follows that
\[
  \begin{aligned}
    \mathcal D(c)&=\operatorname{diag}\bigl(\RR(c),\ldots,\RR(c)\bigr),\\
    \mathcal D'(c)&=\operatorname{diag}\bigl(\RR'(c),\ldots,\RR'(c)\bigr).
  \end{aligned}
\]
Since $c$ centralizes $T$, the matrix $\RR(c)$ is scalar.
The central-scalar condition for the homogeneous relation implies
that $\RR'(c)$ has the same scalar.
\end{proof}

The induced pair therefore has equal factor sets and mixed comparison
functions on the common domains. We have also identified its
correspondences on intermediate groups and verified the central-scalar
condition. The block arguments below establish the common defect group,
its normalizer condition, and the assertions concerning heights.

\subsection{Blocks, defect groups, and relative heights}\label{subsec:2-4}

Fix a pair of associated projective representations affording
\[
  (G,N,\theta)_{\HH}\geq_b(H,M,\varphi)_{\HH},
  \qquad G=NH,\qquad M=N\cap H,
\]
and let $\tau$ denote the resulting family of character correspondences.
Write $b=\bl(\theta)$ and $b'=\bl(\varphi)$, and let $D\leq M$ be
the common defect group specified by the given relation. Thus
\begin{equation}\label{eq:2-28}
  \N_N(D)\leq M,\qquad (b')^N=b.
\end{equation}
The second equality is the block condition at $N$. For $N\leq V\leq G$,
write $V'=V\cap H$. Throughout this subsection, $\theta$ and $\varphi$
may have arbitrary heights, and $V$ need not be contained in $G_\theta$.

\begin{lemma}[Normalizer of the defect group $D$]\label{lem:2-13}
We have
\[
  H=M\N_G(D),\qquad \N_G(D)\leq H.
\]
Consequently, for every $N\leq V\leq G$,
\begin{equation}\label{eq:2-29}
  V=NV',\qquad V'=M\N_V(D),\qquad \N_V(D)\leq V'.
\end{equation}
\end{lemma}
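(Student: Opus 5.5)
The plan is to run a Frattini argument on the local side and then use the normalizer hypothesis in \eqref{eq:2-28} to move the whole normalizer $\N_G(D)$ into $H$. Since $D$ is a defect group of $b'=\bl(\varphi)\in\Bl(M)$, we have $D\leq M$.

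\textbf{Step 1: $H=M\N_H(D)$.} Let $h\in H$. Because $(H,M,\varphi)_{\HH}$ is an $\HH$-triple, there is $\sigma\in\HH$ with $\varphi^h=\varphi^\sigma$. Then
\[
  \bl(\varphi)^h=\bl(\varphi^h)=\bl(\varphi^\sigma)=\bl(\varphi)^\sigma .
\]
The left-hand block has defect group $D^h$. By Lemma~\ref{lem:galois-blocks} (equivalently Lemma~\ref{lem:2-1}), the right-hand block has defect group $D$. Defect groups of a block of $M$ are $M$-conjugate, so $D^h=D^m$ for some $m\in M$. Hence $hm^{-1}\in\N_H(D)$, and so $H=\N_H(D)M=M\N_H(D)$, using $M\unlhd H$.

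\textbf{Step 2: $\N_G(D)\leq H$.} Let $x\in\N_G(D)$. Since $G=NH$, write $x=nh$ with $n\in N$ and $h\in H$. By Step 1, $h=mk$ with $m\in M$ and $k\in\N_H(D)$. Then $nm=xk^{-1}$ lies in both $N$ and $\N_G(D)$, so $nm\in\N_N(D)\leq M$ by \eqref{eq:2-28}. Therefore $x=(nm)k\in H$. It follows that $\N_H(D)=\N_G(D)$, and Step 1 gives $H=M\N_G(D)$.

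\textbf{Step 3: the consequences \eqref{eq:2-29}.} Fix $N\leq V\leq G$.
\begin{itemize}
\item The equality $V=NV'$ is the Dedekind law applied to $G=NH$ with $N\leq V$.
\item The inclusion $\N_V(D)\leq V\cap H=V'$ follows from Step 2.
\item For $V'=M\N_V(D)$, take $v\in V'\leq H$ and write $v=my$ with $m\in M$ and $y\in\N_G(D)$. Then $y=m^{-1}v\in V$, because $M\leq N\leq V$, so $y\in\N_V(D)$. The reverse inclusion follows from $M\leq V'$ and $\N_V(D)\leq V'$.
\end{itemize}

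The only nonformal point is Step 1. The local triple is only an $\HH$-triple, so $\varphi$ need not be $H$-invariant. The Frattini argument therefore has to use the fact that Galois conjugation in $\HH$ preserves defect groups. This is exactly what Lemma~\ref{lem:galois-blocks} supplies, and it is what makes the argument work without assuming $V\leq G_\theta$.
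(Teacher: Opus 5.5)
Your proposal is correct and follows essentially the same route as the paper. It runs a Frattini argument for $H=M\N_H(D)$ using semi-invariance of $\varphi$ and Galois invariance of defect groups (Lemma~\ref{lem:2-1}), then combines $G=N\N_H(D)$ with $\N_N(D)\leq M$ to get $\N_G(D)\leq H$, and intersects with $V$ to obtain \eqref{eq:2-29}; the only cosmetic difference is that you write $x=(nm)k$ explicitly instead of first recording $G=N\N_H(D)$.
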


\begin{proof}
By Lemma~\ref{lem:2-1}, the specified subgroup $D$ is a defect group
of every Galois conjugate of $b'$ under $\HH$. For $h\in H$,
semi-invariance of the local $\HH$-triple gives $\gamma\in\HH$ such
that $\varphi^h=\varphi^\gamma$, and hence $(b')^h=(b')^\gamma$.
Thus $D^h$ and $D$ are defect groups of the same block and are conjugate
in $M$. It follows that $H=M\N_H(D)$. Since $G=NH$ and $M\leq N$,
we also have $G=N\N_H(D)$. If $g\in\N_G(D)$, write $g=nh$ with
$n\in N$ and $h\in\N_H(D)$. Then $n\in\N_N(D)\leq M$, so
$g\in H$. This proves the first two assertions.

Since $N\leq V$, the factorization $G=NH$ gives
$V=N(V\cap H)=NV'$. Intersecting $H=M\N_G(D)$ with $V$ and using
$M\leq N\leq V$ gives the remaining assertions in
\eqref{eq:2-29}. This argument uses semi-invariance of the local
$\HH$-triple; it does not require $b'$ to be fixed by every element
of $\HH$.
\end{proof}

\begin{lemma}[Stabilizers of $b$ and $b'$]\label{lem:2-14}
We have $H_b=H_{b'}$. For $N\leq V\leq G$, set $W=V_b$ and
$W'=(V')_{b'}$. Then
\begin{equation}\label{eq:2-30}
  W=NW',\qquad W\cap V'=W',\qquad N\cap W'=M.
\end{equation}
\end{lemma}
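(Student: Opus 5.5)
We first prove $H_b=H_{b'}$ and then derive the factorizations in \eqref{eq:2-30} from it together with Lemma~\ref{lem:2-13}.

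\emph{Step 1: $H_b=H_{b'}$.} Let $h\in H$. Block induction commutes with conjugation, so \eqref{eq:2-28} gives
\[
  \bigl((b')^h\bigr)^N=b^h .
\]
Hence $(b')^h=b'$ implies $b^h=b$, and $H_{b'}\leq H_b$.

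For the reverse inclusion, let $h\in H_b$. Semi-invariance of the local $\HH$-triple gives $\gamma\in\HH$ with $\varphi^h=\varphi^\gamma$, so $(b')^h=(b')^\gamma$. By Lemma~\ref{lem:2-1} this block has defect group $D$, and $D^h$ is also a defect group of it. So $D^h=D^m$ for some $m\in M$. Replacing $h$ by $hm^{-1}$ changes neither membership in $H_b$ nor in $H_{b'}$, since $M$ fixes both blocks. We may therefore assume $h\in\N_H(D)$.

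Now $b'$ and $(b')^h$ are blocks of $M$ with defect group $D$, and both induce to $b$ (the second because $b^h=b$). Since $\N_N(D)\leq M$, Brauer's first main theorem gives an injection from blocks of $M$ with defect group $D$ to blocks of $N$ with defect group $D$. To see this, write $\N_M(D)=\N_N(D)$; both blocks correspond to blocks of $\N_M(D)$, and transitivity of block induction identifies their $N$-inductions with the Brauer correspondents in $N$. Injectivity gives $(b')^h=b'$. This uniqueness step is the main obstacle: one must check that the induced blocks are defined and that the defect-group equality is used correctly via Brauer's first main theorem.

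\emph{Step 2: $W=NW'$.} Since $N$ fixes $b$, $V=NV'$, and $V'\leq H$, Step 1 gives
\[
  W=V_b=N(V'\cap H_b)=N(V'\cap H_{b'})=NW'.
\]

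\emph{Step 3: the intersections.} We have $W\cap V'=V'\cap H_b=W'$, again by Step 1. Finally $M\leq W'$ because $M$ fixes its own block, and $N\cap W'\leq N\cap H=M$, so $N\cap W'=M$.
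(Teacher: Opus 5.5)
Your proof is correct and takes essentially the paper's route. The reduction to $\N_H(D)$ via $D^h=D^m$ is the factorization $H=M\N_G(D)$ from Lemma~\ref{lem:2-13}, and your injectivity of $b_1\mapsto b_1^N$ on blocks of $M$ with defect group $D$ is the same Brauer-first-main-theorem argument that the paper phrases through the common Brauer correspondent $c_0\in\Bl(\N_N(D))$; the derivation of \eqref{eq:2-30} from $H_b=H_{b'}$ and $V=NV'$ matches the paper.
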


\begin{proof}
Equation~\eqref{eq:2-28} gives $\N_M(D)=\N_N(D)$. Let $c_0$ be
the Brauer correspondent of $b'$ in $\N_M(D)$. Transitivity of
Brauer correspondence and the equality $(b')^N=b$ yield
\[
  c_0^M=b',\qquad c_0^N=b.
\]
Thus $c_0$ is the Brauer correspondent of both $b$ and $b'$ in the
common subgroup $\N_N(D)=\N_M(D)$.

Every $h\in\N_G(D)$ normalizes $N$, $M$, and $D$. Uniqueness in
Brauer's first main theorem, applied to both blocks, gives
\[
  b^h=b\quad\Longleftrightarrow\quad c_0^h=c_0
  \quad\Longleftrightarrow\quad (b')^h=b'.
\]
For an arbitrary $h\in H$, Lemma~\ref{lem:2-13} gives a factorization
$h=mh_0$ with $m\in M$ and $h_0\in\N_G(D)$. Conjugation by $m$
fixes both blocks, so the same equivalence holds for $h$. Therefore
$H_b=H_{b'}$.

Finally, $V=NV'$ and $N$ fixes $b$, so
$V_b=N(V'\cap H_b)=NW'$. The other two equalities in
\eqref{eq:2-30} follow by intersection.
\end{proof}

The next lemma formulates the argument of
\cite[Lemma~3.5(b)--(d)]{NS} in terms of blocks. It requires only
that the covered block be invariant; an irreducible character in that
block need not be invariant under the whole upper group.

\begin{lemma}[A common Harris--Kn\"orr correspondence]\label{lem:2-15}
With $W$ and $W'$ as in Lemma~\ref{lem:2-14}, block induction gives
a bijection
\[
  \begin{aligned}
    \{C'\in\Bl(W')\mid C'\text{ covers }b'\}
      &\longrightarrow \{C\in\Bl(W)\mid C\text{ covers }b\},\\
    C'&\longmapsto (C')^W.
  \end{aligned}
\]
For each corresponding pair $C'$ and $C=(C')^W$, there is a common
defect group $Q\leq W'$ such that
\begin{equation}\label{eq:2-31}
  Q\cap N=Q\cap M=D.
\end{equation}
\end{lemma}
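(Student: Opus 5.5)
The plan is to reduce to the Harris--Kn\"orr correspondence over the common Brauer correspondent $c_0$ of $b$ and $b'$ in $\N_N(D)=\N_M(D)$, which was found in the proof of Lemma~\ref{lem:2-14}. The key input is that $b$ is $W$-invariant and $b'$ is $W'$-invariant. By Lemma~\ref{lem:2-13}, $W=NW'$ and $\N_W(D)\leq W'$, and Lemma~\ref{lem:2-14} gives $W\cap V'=W'$. Set $X_0=\N_W(D)=\N_{W'}(D)$. By the Frattini argument, $W=N X_0$ and $W'=M X_0$. Also $c_0$ is $X_0$-invariant: an element of $X_0$ fixes $b$, normalizes $\N_N(D)$, and hence fixes $c_0$ by uniqueness in Brauer's first main theorem.

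First step: apply the Harris--Kn\"orr theorem to $N\unlhd W$ with covered block $b$ and Brauer correspondent $c_0$ in $\N_N(D)\unlhd X_0$. This gives a bijection $\beta\mapsto\beta^W$ from the blocks of $X_0$ covering $c_0$ to the blocks of $W$ covering $b$. Moreover, a defect group $Q$ of $\beta^W$ may be chosen as a defect group of $\beta$, so $Q\leq X_0$ and $Q\cap N=D$. That $Q\cap N$ is a defect group of the covered block $b$ containing, and in fact equal to, $D$ comes from the standard Kn\"orr statement that $Q\cap N$ is a defect group of $b$. One still has to check that one can arrange $Q\cap N=D$ exactly rather than an $N$-conjugate. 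I would do this by working inside $X_0$: since $D\unlhd X_0$ and $D\leq Q$, the intersection $Q\cap\N_N(D)$ is a defect group of $c_0$ containing the normal $p$-subgroup $D$, hence equals $D$. Then $Q\cap N\geq D$ is a $p$-subgroup of $N$ normalized by $Q$, and maximality forces equality.

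Second step: apply the same theorem to $M\unlhd W'$, covered block $b'$, with the same $c_0$ and the same $X_0$. This gives the bijection $\beta\mapsto\beta^{W'}$ from blocks of $X_0$ covering $c_0$ to blocks of $W'$ covering $b'$, with the same defect group $Q$ and $Q\cap M=D$. Transitivity of block induction ($X_0\leq W'\leq W$) gives $(\beta^{W'})^W=\beta^W$. Composing the inverse of the second bijection with the first therefore yields exactly $C'\mapsto (C')^W$. This map is a bijection between the stated sets, and each pair $C',C$ shares the defect group $Q\leq X_0\leq W'$ with $Q\cap N=Q\cap M=D$.

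The main obstacle is verifying the hypotheses of Harris--Kn\"orr in the form needed. It is usually stated with $b$ invariant and a correspondence to $\N_W(D)$ covering the Brauer correspondent, and the defect-group conclusion gives $Q\cap N$ only up to conjugacy. I would first invoke the version in \cite[Theorem~9.28]{NavBlocks} together with the intersection argument in $X_0$ above, following \cite[Lemma~3.5(b)--(d)]{NS}. In that reference the invariance of a character is used only through the invariance of its block, which is exactly what Lemma~\ref{lem:2-14} supplies here.
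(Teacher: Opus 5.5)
Your proposal is correct and follows essentially the same route as the paper. Both arguments use the Frattini factorizations $W=N\N_W(D)$ and $W'=M\N_W(D)$, then two Harris--Kn\"orr bijections over the common $\N_W(D)$-invariant Brauer correspondent $c_0$, and finally transitivity of block induction.

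The only difference is how you get $Q\cap N=D$. The paper notes that $Q\leq\N_W(D)$ forces $Q\cap N\leq\N_N(D)$, so $Q\cap N=Q\cap\N_N(D)=D$ directly. This is a little cleaner than your appeal to Kn\"orr's theorem for $N\unlhd W$ combined with comparing orders.
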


\begin{proof}
Set $R_D=\N_W(D)$. By Lemma~\ref{lem:2-13}, we have $R_D\leq W'$
and $R_D=\N_{W'}(D)$. Since $b$ is $W$-invariant, $b'$ is
$W'$-invariant, and both blocks have defect group $D$, the Frattini
argument gives
\[
  W=NR_D,\qquad W'=MR_D.
\]
The proof of Lemma~\ref{lem:2-14} identifies a common local block
$c_0\in\Bl(\N_N(D))$. This block is $R_D$-invariant: the group
$R_D$ normalizes $N$ and $D$ and fixes $b$, so uniqueness of the
Brauer correspondent implies that it also fixes $c_0$.

Apply the Harris--Kn\"orr correspondence first to $N\unlhd W$ and
its invariant block $b$, and then to $M\unlhd W'$ and its invariant
block $b'$. In both applications, the local groups are $R_D$ and
$\N_N(D)=\N_M(D)$, and $c_0$ is the common invariant block of the
latter. Thus each block $C'$ covering $b'$ corresponds to a unique
block $c\in\Bl(R_D)$ covering $c_0$ such that $c^{W'}=C'$. The same
block $c$ corresponds to the unique block $C=c^W$ covering $b$.
Both block inductions are defined by the Harris--Kn\"orr theorem.
Since $R_D\leq W'\leq W$, transitivity gives $C=(C')^W$. The two
Harris--Kn\"orr bijections also give uniqueness of the inverse
correspondence.

These correspondences preserve defect groups. Choose a defect group
$Q\leq R_D$ of $c$. Then $Q$ is a defect group of both $C$ and
$C'$. Since $c$ covers $c_0$ and $\N_N(D)\unlhd R_D$, the
normal-subgroup intersection theorem gives $Q\cap\N_N(D)=D$.
Here $D\unlhd R_D$, so the equality holds for the specified subgroup
$D$. Since $Q\leq R_D$, every element of $Q$ normalizes $D$. Hence
\[
  Q\cap N=Q\cap\N_N(D)=D.
\]
Since $D\leq M\leq N$, we also have $Q\cap M=D$.
\end{proof}

\begin{proposition}[Blocks of the fixed Clifford correspondence]\label{prop:2-16}
Let $N\leq V\leq G$, write $V'=V\cap H$, and set
\[
  V_0=V_\theta,\qquad V_0'=V_0\cap H=(V')_\varphi.
\]
For $\chi_0\in\Irr(V_0\mid\theta)$, define
\[
  \chi_0'=\tau_{V_0}(\chi_0),\qquad
  \chi=\chi_0^V,\qquad \chi'=(\chi_0')^{V'}.
\]
Then $\chi$ and $\chi'$ are irreducible, and their blocks have a
common defect group $Q\leq V'$ satisfying
\begin{align}
  Q\cap N&=Q\cap M=D,\qquad \N_V(Q)\leq V',\label{eq:2-32}\\
  \bl(\chi')^V&=\bl(\chi),\label{eq:2-33}\\
  \htc(\chi)-\htc(\theta)&=\htc(\chi')-\htc(\varphi).
    \label{eq:2-34}
\end{align}
\end{proposition}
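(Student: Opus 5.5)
Irreducibility is Clifford's theorem on both sides: $\chi_0$ lies over $\theta$ with $V_0=V_\theta$, and $\chi_0'$ lies over $\varphi$ with $V_0'=(V')_\varphi$, so $\chi=\chi_0^V$ and $\chi'=(\chi_0')^{V'}$ are irreducible.

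For the block statements we pass to the block stabilizers $W=V_b$ and $W'=(V')_{b'}$ of Lemma~\ref{lem:2-14}. Since $\theta$ lies in $b$, we have $V_0\leq W$ and $V_0'\leq W'$. The characters $\chi_W:=\chi_0^W$ and $\chi'_{W'}:=(\chi_0')^{W'}$ are irreducible and lie over $b$ and $b'$. By the Fong--Reynolds theorem, block induction from $W$ to $V$ is a defect-preserving bijection on blocks covering $b$, sending $\bl(\chi_W)$ to $\bl(\chi)$; similarly on the local side. We will therefore prove the statement with $(W,W')$ in place of $(V,V')$. First we show $\bl(\chi'_{W'})^W=\bl(\chi_W)$. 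By Definition~\ref{def:block}(iv), applied to $V_0\leq G_\theta$, we have $\bl(\chi_0')^{V_0}=\bl(\chi_0)$. We will then use transitivity of block induction along $V_0'\leq V_0\leq W$ and $V_0'\leq W'\leq W$. This gives $\bl(\chi_0')^{W}=\bl(\chi_W)$, and it also gives $\bl(\chi_0')^{W}=(\bl(\chi_0')^{W'})^W$, provided these inductions are defined. Definedness follows because $\C_W(D)\leq\N_W(D)\leq W'$ by Lemma~\ref{lem:2-13}, and since the blocks involved have defect groups containing a conjugate of $D$, the usual centralizer criterion applies. It remains to check that $\bl(\chi_0')^{W'}=\bl(\chi'_{W'})$. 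Both blocks cover $b'$, and $\chi'_{W'}$ is induced from $\chi_0'$ inside the stabilizer of $b'$; the Fong--Reynolds-type induction identity (induced characters lie in the induced block when the inducing subgroup contains the block stabilizer data) should give this. This is the step I expect to need the most care: Fong--Reynolds is usually stated for the inertia group of the covered \emph{block}, whereas $V_0'$ is the inertia group of a character. I would instead argue via central characters, computing $\lambda_{\chi'_{W'}}$ on class sums of $W'$ from the induction formula and comparing with $\lambda_{\chi_0'}$ through the Brauer map, using $\N_{W'}(D)$-control.

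With $C'=\bl(\chi'_{W'})$ and $C=\bl(\chi_W)=(C')^W$, Lemma~\ref{lem:2-15} provides a common defect group $Q\leq W'$ with $Q\cap N=Q\cap M=D$; transporting up by Fong--Reynolds gives \eqref{eq:2-33} and the first part of \eqref{eq:2-32}. For $\N_V(Q)\leq V'$, let $g\in\N_V(Q)$. Then $g$ normalizes $Q\cap N=D$, so $g\in\N_V(D)\leq V'$ by Lemma~\ref{lem:2-13}.

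For \eqref{eq:2-34}, the degrees satisfy $\chi(1)=[V:V_0]\chi_0(1)$ and $\chi'(1)=[V':V_0']\chi_0'(1)$. The indices agree because $V=NV'$ and $V_0=NV_0'$ with $N\cap V'=M$. Next, $\chi_0(1)/\theta(1)=\chi_0'(1)/\varphi(1)$, since both are the degree of the common projective factor $\QQ$ of $V_0/N$. The defect groups of $\chi$ and $\chi'$ coincide, as do those of $\theta$ and $\varphi$, namely $D$. Writing $\htc(\chi)=\log_p(\chi(1)|Q|/|V|)_p$, and similarly for the others, the differences are $\log_p$ of $\bigl(\chi(1)|Q||N|/(\theta(1)|D||V|)\bigr)_p$. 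The analogous local expression has the same ratio of degrees, and $|V|/|N|=|V'|/|M|$. Hence the two differences agree.
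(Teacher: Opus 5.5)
Your proposal is correct and follows essentially the paper's argument: you reduce to the block stabilizers $W,W'$, obtain $(C')^W=C$ by transitivity through $V_0$ and through $W'$ starting from $(d')^{V_0}=d$, take $Q$ from Lemma~\ref{lem:2-15}, lift by Fong--Reynolds, and compare heights. The one difference is that you get definedness of $(C')^W$ from the centralizer criterion rather than from Lemma~\ref{lem:2-15}, which is fine. The step you flagged is the standard fact, which you also use without comment on the upper side for $\bl(\chi_0)^W=\bl(\chi_0^W)$, that if $S\leq R$, $\zeta\in\Irr(S)$ and $\zeta^R$ is irreducible, then $\bl(\zeta)^R$ is defined and equals $\bl(\zeta^R)$. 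It follows at once from the central-character identity $\omega_{\zeta^R}(\widehat K)=\omega_\zeta(\widehat{K\cap S})$, obtained from the induction formula, so no Brauer-map or local-control argument is needed.
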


\begin{proof}
Equality of the ordinary stabilizers gives $H_\theta=H_\varphi$,
and hence $V_0'=(V')_\varphi$. Clifford correspondence for
$N\unlhd V$ and $M\unlhd V'$ shows that $\chi$ and $\chi'$ are
irreducible.

Set $W=V_b$ and $W'=(V')_{b'}$. Lemma~\ref{lem:2-14} gives
$W=NW'$, and we have $V_0\leq W$ and $V_0'\leq W'$. First induce
to these block stabilizers, setting
\[
  \chi_b=\chi_0^W,\qquad \chi'_{b'}=(\chi_0')^{W'},\qquad
  C=\bl(\chi_b),\qquad C'=\bl(\chi'_{b'}).
\]
These characters are irreducible because $W_\theta=V_0$ and
$(W')_\varphi=V_0'$. Write $d=\bl(\chi_0)$ and
$d'=\bl(\chi_0')$. Since $\chi_0'$ is defined by the fixed map
$\tau_{V_0}$, the given block relation yields
\[
  (d')^{V_0}=d.
\]
Since the induced characters are irreducible, the standard
block-induction formula gives $C=d^W$ and $C'=(d')^{W'}$; see
\cite[proof of Theorem~3.14, pp.~712--713]{NS}. Since $C'$ covers
$b'$, Lemma~\ref{lem:2-15} also ensures that $(C')^W$ is defined.
Applying transitivity along $V_0'\leq W'\leq W$ and
$V_0'\leq V_0\leq W$, we obtain
\begin{equation}\label{eq:2-35}
  (C')^W=\bigl((d')^{W'}\bigr)^W=(d')^W
    =\bigl((d')^{V_0}\bigr)^W=d^W=C.
\end{equation}
Thus $C$ and $C'$ are the corresponding blocks of
Lemma~\ref{lem:2-15}. Choose the common defect group $Q$ supplied
by that lemma, so that $Q\cap N=Q\cap M=D$.

Now apply Fong--Reynolds correspondence to $b$ for $N\unlhd V$ and
to $b'$ for $M\unlhd V'$. The block inertia groups are $W$ and
$W'$, respectively, and
\[
  \chi_b^V=\chi,\qquad (\chi'_{b'})^{V'}=\chi'.
\]
Consequently, $\bl(\chi)=C^V$ and $\bl(\chi')=(C')^{V'}$ are
the corresponding Fong--Reynolds blocks. This correspondence preserves
defect groups, so the same subgroup $Q$ is a defect group of both
$\bl(\chi)$ and $\bl(\chi')$.

Since $N\unlhd V$, every element of $\N_V(Q)$ also normalizes
$Q\cap N=D$. Lemma~\ref{lem:2-13} therefore gives
\[
  \N_V(Q)\leq\N_V(D)\leq V'.
\]
Brauer's first main theorem now ensures that the block $\bl(\chi')$,
whose defect group is $Q$, induces to $V$. Using \eqref{eq:2-35}
and transitivity once more gives
\[
  \bl(\chi')^V=\bigl((C')^{V'}\bigr)^V=(C')^V
    =\bigl((C')^W\bigr)^V=C^V=\bl(\chi).
\]
This proves \eqref{eq:2-32} and \eqref{eq:2-33}.

Finally, $V=NV'$ and $V_0=NV_0'$ imply
$[V:V_0]=[V':V_0']$. The fixed triple correspondence preserves
relative degrees, so
\[
  \frac{\chi(1)}{\chi'(1)}
    =\frac{[V:V_0]\chi_0(1)}{[V':V_0']\chi_0'(1)}
    =\frac{\theta(1)}{\varphi(1)}.
\]
The blocks of $\chi$ and $\chi'$ have common defect group $Q$,
the blocks $b$ and $b'$ have common defect group $D$, and
$[V:V']=[N:M]$. The definition of height therefore gives
\[
  \begin{aligned}
    \htc(\chi)-\htc(\chi')
      &=v_p\!\left(\frac{\chi(1)}{\chi'(1)}\right)
        -v_p\!\left(\frac{|V|}{|V'|}\right)\\
      &=v_p\!\left(\frac{\theta(1)}{\varphi(1)}\right)
        -v_p\!\left(\frac{|N|}{|M|}\right)\\
      &=\htc(\theta)-\htc(\varphi),
  \end{aligned}
\]
which is \eqref{eq:2-34}.
\end{proof}

\begin{corollary}[Height changes in a normal intermediate group]\label{cor:2-17}
Let $N\leq J\unlhd G$, and set $L=J\cap H$, $T=J_\theta$, and
$U=T\cap H$. For $\eta\in\Irr(T\mid\theta)$, define
\[
  \eta'=\tau_T(\eta),\qquad \psi=\eta^J,\qquad \psi'=(\eta')^L.
\]
Then $\bl(\psi)$ and $\bl(\psi')$ have a common defect group $Q$
with $\N_J(Q)\leq L$, and
\begin{equation}\label{eq:2-36}
  \htc(\psi)-\htc(\eta)=\htc(\psi')-\htc(\eta').
\end{equation}
\end{corollary}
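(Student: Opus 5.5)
The plan is to apply Proposition~\ref{prop:2-16} with $V=J$, and then to convert its relative-height formula, which is measured against $\theta$ and $\varphi$, into one measured against $\eta$ and $\eta'$. Take $V=J$, so that $V'=J\cap H=L$, $V_0=J_\theta=T$ and $V_0'=T\cap H=U$, and put $\chi_0=\eta$. Then $\chi_0'=\tau_T(\eta)=\eta'$, $\chi=\eta^J=\psi$ and $\chi'=(\eta')^L=\psi'$. Proposition~\ref{prop:2-16} immediately gives a common defect group $Q\leq L$ of $\bl(\psi)$ and $\bl(\psi')$ with $Q\cap N=Q\cap M=D$ and $\N_J(Q)\leq L$. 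It also gives
\[
  \htc(\psi)-\htc(\theta)=\htc(\psi')-\htc(\varphi).
\]

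It then suffices to show that
\[
  \htc(\eta)-\htc(\theta)=\htc(\eta')-\htc(\varphi),
\]
since subtracting this from the previous display yields \eqref{eq:2-36}. The required identity is exactly the height statement \eqref{eq:2-7} of Theorem~\ref{thm:2-7}. I would apply that theorem to the homogeneous situation $N\leq T\unlhd G$, which is legitimate because $T\leq G_\theta$ and $T\unlhd G$ by Lemma~\ref{lem:2-2}, together with $T\cap H=U$. In that application $\psi$ is replaced by $\eta\in\Irr(T\mid\theta)$, and its image is $\tau_T(\eta)=\eta'$ under the same fixed pair.

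Alternatively, one can apply Proposition~\ref{prop:2-16} a second time with $V=T$. Then $V_0=T$, no induction occurs, and \eqref{eq:2-34} gives the same identity directly. I would probably use this version, since it avoids invoking the full $\HH$-triple machinery of Theorem~\ref{thm:2-7}.

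I do not expect a serious obstacle. The only point needing care is the bookkeeping of heights: they must be computed with respect to the blocks of $T$ and $U$ and their own defect groups. This is automatic, because both formulas are differences of heights, each taken with respect to its own block. I would also note that no common defect group between $\bl(\eta)$ and $\bl(\psi)$ is required, since the argument only combines two separately established height identities.
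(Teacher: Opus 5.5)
Your proposal is correct and is essentially the paper's own proof: apply Proposition~\ref{prop:2-16} with $V=J$ to obtain the common defect group $Q$, the inclusion $\N_J(Q)\leq L$, and the height identity relative to $\theta$ and $\varphi$. Then apply the same proposition with $V=T$, where no induction occurs, and subtract the two identities.
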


\begin{proof}
Apply Proposition~\ref{prop:2-16} first with $V=J$. It gives the
common defect group, the inclusion $\N_J(Q)\leq L$, and the equality
\[
  \htc(\psi)-\htc(\psi')=\htc(\theta)-\htc(\varphi).
\]
Apply the same proposition with $V=T$. Since $T_\theta=T$, no
induction is needed, and we obtain
\[
  \htc(\eta)-\htc(\eta')=\htc(\theta)-\htc(\varphi).
\]
Subtracting proves \eqref{eq:2-36}. Thus, in this setting, the given
block relation implies the height-change and defect-normalizer
hypotheses of \cite[Theorem~3.14]{NS}.
\end{proof}

\begin{corollary}[Block conditions for every intermediate group]\label{cor:2-18}
With the notation of Corollary~\ref{cor:2-17}, consider the pair
associated with $\psi$ and $\psi'$ obtained by projective induction
from the homogeneous upper pair. For every $J\leq V\leq G_\psi$,
the tensor correspondence $\Omega_V$ afforded by this pair satisfies
\[
  \bl\bigl(\Omega_V(\chi)\bigr)^V=\bl(\chi)
  \qquad\bigl(\chi\in\Irr(V\mid\psi)\bigr).
\]
\end{corollary}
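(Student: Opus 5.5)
The strategy is to identify $\Omega_V$ explicitly and then read off the block identity from Proposition~\ref{prop:2-16}, applied to a suitable ambient configuration. Fix $J\leq V\leq G_\psi=K$ and $\chi\in\Irr(V\mid\psi)$. By Proposition~\ref{prop:2-11}, the correspondence afforded by the induced pair $(\mathcal D,\mathcal D')$ is
\[
  \Omega_V(\chi)=\bigl(\tau_{V_1}(\chi_1)\bigr)^{V'},
\]
where $V_1=V\cap K_1=V_\eta$, $V'=V\cap H_{\psi'}$, and $\chi_1\in\Irr(V_1\mid\eta)$ is the Clifford correspondent of $\chi$. Here $\tau_{V_1}$ is the homogeneous correspondence afforded by $(\RR,\RR')$, which is a restriction of the original one afforded by $(\PP,\PP')$ (Theorem~\ref{thm:2-7} and the tensor construction in its proof).

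The first step is to express $\chi$ through a character over $\theta$. Since $V_1=V_\eta\leq G_\eta\leq G_\theta$, the group $V_1$ is contained in $V_\theta=:V_0$. Conversely, an element of $V_0$ fixes $\psi$ and $\theta$, so by Lemma~\ref{lem:2-5} it lies in $K_1$. Hence $V_0=V_1$.

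Next, the character $\chi_1\in\Irr(V_1\mid\eta)$ lies over $\theta$. Its $\tau$-image, computed via the homogeneous pair $(\RR,\RR')$, agrees with $\tau_{V_0}(\chi_1)$ computed via $(\PP,\PP')$. To see this, write $\chi_1=\tr(\mathcal S\otimes\RR_{V_1})$. Then $\RR=\RR_0\otimes\PP$ as in \eqref{eq:2-10}, so $\chi_1=\tr((\mathcal S\otimes\RR_0)\otimes\PP_{V_1})$, and the local side matches in the same way. Since $\RR'=\RR_0\otimes\PP'$ on $V_1'$, this gives
\[
  \tau_{V_1}(\chi_1)=\tau_{V_0}(\chi_1),\qquad \chi=\chi_1^V .
\]
With $\chi_0=\chi_1$, the pair $(\chi,\Omega_V(\chi))$ is exactly the pair $(\chi,\chi')$ of Proposition~\ref{prop:2-16} for the group $N\leq V\leq G$. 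For this we also need $V\cap H=V'$. This holds because $V\cap H\leq K\cap H=V$ by Lemma~\ref{lem:2-5}, where the latter $V$ denotes $H_{\psi'}$ in that lemma's notation.

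Proposition~\ref{prop:2-16}, specifically \eqref{eq:2-33}, then gives $\bl(\Omega_V(\chi))^V=\bl(\chi)$, including the fact that the induction is defined. Indeed, the common defect group $Q$ satisfies $\N_V(Q)\leq V'$, so Brauer's first main theorem guarantees definability.

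The main obstacle is the identification of the two tensor correspondences in the second step. The factor sets must be tracked consistently: $\RR_0$ has factor set $\beta\alpha^{-1}$, so $\mathcal S\otimes\RR_0$ has factor set $\alpha^{-1}$ restricted to $V_1$. This is exactly what the definition of $\tau_{V_0}$ via $\PP$ requires. One must also check that the same $\RR_0$ serves on the local side, which is guaranteed by the definition of $\RR'$ in the proof of Theorem~\ref{thm:2-7}. I would verify this carefully and otherwise cite Proposition~\ref{prop:2-11} and Proposition~\ref{prop:2-16} directly.
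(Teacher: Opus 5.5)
Your proposal is correct and takes essentially the same route as the paper. You identify $\Omega_V(\chi)=\bigl(\tau_{V_\theta}(\chi_0)\bigr)^{V\cap H}$ using $V_\theta=V\cap G_\eta$, the induced-tensor description of Proposition~\ref{prop:2-11}, and the factorization \eqref{eq:2-10} showing that the homogeneous pair $(\RR,\RR')$ affords the restriction of the original $\tau$, and then read off the block identity from \eqref{eq:2-33} of Proposition~\ref{prop:2-16}; the only difference is that the paper re-verifies the tensor--induction identity inside the proof instead of citing Proposition~\ref{prop:2-11}, while you spell out the compatibility with $\tau$ that the paper only cites.
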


\begin{proof}
Set $G_1=G_\eta=G_{\psi,\theta}$ and
$H_1=H_{\eta'}=H_{\psi',\varphi}$. The group and Clifford
calculations above give $G_\psi=JG_1$, $G_1\cap J=T$, and the
corresponding local equalities. Hence, for $J\leq V\leq G_\psi$,
\[
  V_\theta=V\cap G_1.
\]
For $\chi\in\Irr(V\mid\psi)$, let
$\chi_0\in\Irr(V_\theta\mid\eta)$ be its Clifford correspondent,
so that $\chi_0^V=\chi$. The correspondence determined by the fixed
representations is
\begin{equation}\label{eq:2-37}
  \Omega_V(\chi)=\bigl(\tau_{V_\theta}(\chi_0)\bigr)^{V\cap H}.
\end{equation}
To verify this identity for the specified pair, recall that the
homogeneous construction on $\Irr(V_\theta\mid\eta)$ gives the
restriction of the original correspondence $\tau_{V_\theta}$.
Let $\RR$ be the homogeneous upper representation associated with
$\eta$, let $\mathcal D$ be its induced representation, and let $\QQ$
be a projective representation inflated from $V/J$. Then
\[
  \QQ\otimes\mathcal D_V\cong
  \operatorname{Ind}_{V_\theta}^{V}
    \bigl(\QQ|_{V_\theta}\otimes\RR_{V_\theta}\bigr).
\]
This identity follows by comparing the matrix blocks indexed by the
chosen representatives of $J/T$. The representation $\QQ$ takes
the identity matrix on these representatives, so moving its tensor
factor through the induced matrices introduces no scalar. Repeating
this argument for the primed representations on $V\cap H$, with the
same representatives viewed in $L/U$, gives the corresponding
tensor-induction identity.
Together with compatibility of the homogeneous pair with $\tau$,
this proves \eqref{eq:2-37}.

Apply Proposition~\ref{prop:2-16} to $V$ and $\chi_0$. Its block
equality, together with \eqref{eq:2-37}, proves the assertion.
Thus every intermediate-group block condition is satisfied by the
same induced pair whose mixed comparison functions were computed
in Subsection~2.3.
\end{proof}

\subsection{The Clifford transfer theorem}\label{subsec:2-5}

\begin{theorem}[Clifford correspondence]\label{thm:2-19}
Suppose that
\[
  (G,N,\theta)_{\HH}\geq_b(H,M,\varphi)_{\HH},
  \qquad N\leq J\unlhd G,
\]
and fix the representations, the family of correspondences, and the
common defect group $D$ in Definition~\ref{def:block}. Set
\[
  L=J\cap H,\qquad T=J_\theta,\qquad U=T\cap H=L_\varphi.
\]
For $\psi\in\Irr(J\mid\theta)$, let
$\eta\in\Irr(T\mid\theta)$ be its Clifford correspondent, and define
\[
  \eta'=\tau_T(\eta),\qquad \psi'=(\eta')^L.
\]
Then the following statements hold.
\begin{enumerate}[label=\textup{(\alph*)},ref=\textup{(\alph*)}]
\item The map $\psi\mapsto\psi'$ is a bijection from
  $\Irr(J\mid\theta)$ onto $\Irr(L\mid\varphi)$. Together with
  the maps obtained by simultaneous Galois transforms of the fixed
  pair of representations, it determines a unique
  $H\times\HH$-equivariant bijection
  \[
    \Omega_J\colon\Irr(J\mid\theta^{\HH})
      \longrightarrow\Irr(L\mid\varphi^{\HH}).
  \]
\item If $X=G_{\psi^{\HH}}$ and $Y=H_{(\psi')^{\HH}}$, then
  \begin{equation}\label{eq:2-38}
    (X,J,\psi)_{\HH}\geq_b(Y,L,\psi')_{\HH}.
  \end{equation}
  In particular,
  \[
    (H\times\HH)_\psi=(H\times\HH)_{\psi'},\qquad
    X=JY,\qquad J\cap Y=L.
  \]
  The relation in \eqref{eq:2-38} is afforded by the pair of induced
  projective representations $\mathcal D$ and $\mathcal D'$
  constructed in the preceding subsections.
\item For every $J\leq V\leq G_\psi$ and
  $\xi\in\Irr(V\mid\psi)$, the correspondence afforded by this
  pair satisfies
  \begin{equation}\label{eq:2-39}
    \widehat\tau_V(\xi)
      =\bigl(\tau_{V_\theta}(\xi_\theta)\bigr)^{V\cap H},
    \qquad
    \bl\bigl(\widehat\tau_V(\xi)\bigr)^V=\bl(\xi),
  \end{equation}
  where $\xi_\theta$ is the Clifford correspondent of $\xi$ over
  $\theta$.
\item The blocks of $\psi$ and $\psi'$ have a common defect group
  $Q$ such that
  \begin{equation}\label{eq:2-40}
    Q\cap N=Q\cap M=D,\qquad \N_J(Q)\leq L.
  \end{equation}
  Moreover,
  \begin{equation}\label{eq:2-41}
    \frac{\psi(1)}{\theta(1)}=\frac{\psi'(1)}{\varphi(1)},
    \qquad
    \htc(\psi)-\htc(\theta)=\htc(\psi')-\htc(\varphi).
  \end{equation}
\end{enumerate}
\end{theorem}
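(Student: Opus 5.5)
The proof assembles the results of Subsections~2.1--2.4, in the order (a), (b)/(c), (d). Part (a) is Proposition~\ref{prop:2-6}: Clifford induction $\Irr(T\mid\theta)\to\Irr(J\mid\theta)$ and $\Irr(U\mid\varphi)\to\Irr(L\mid\varphi)$ are bijections, and $\tau_T$ is a bijection, so $\psi\mapsto\psi'$ is bijective on the $\theta$-fiber. Gluing over $L$-orbit representatives of $\theta^{\HH}$, using simultaneous transforms of $(\PP,\PP')$, gives $\Omega_J$. Lemma~\ref{lem:2-3} and the independence argument there show that $\Omega_J$ is well defined and $H\times\HH$-equivariant.

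For (b), I will first fix $\psi\in\Irr(J\mid\theta)$ and $\eta$. I then apply Theorem~\ref{thm:2-7} to the normal subgroup $T\unlhd G$, with $T\leq G_\theta$. This gives the homogeneous pair $(\RR,\RR')$ associated with $\eta,\eta'$, the equality \eqref{eq:2-12}, and the comparison identities \eqref{eq:2-13}--\eqref{eq:2-14}. Next I form the induced pair $(\mathcal D,\mathcal D')$ via \eqref{eq:2-17}--\eqref{eq:2-18} on the common transversal, and verify the conditions of Definition~\ref{def:block} for $(X,J,\psi)$ and $(Y,L,\psi')$ one at a time:
\begin{itemize}
\item Condition (i) is the last display of Lemma~\ref{lem:2-5}, namely $X=JY$, $J\cap Y=L$ and $\C_X(J)\leq V$, together with Lemma~\ref{lem:2-9}, which gives the equality of mixed stabilizers, and Lemma~\ref{lem:2-2}.
\item For condition (ii), Lemma~\ref{lem:2-8} shows that the factor sets agree, and Lemma~\ref{lem:2-12} gives the central scalars.
\item Condition (iii) is Proposition~\ref{prop:2-10}.
\item For condition (iv), Proposition~\ref{prop:2-11} identifies the intermediate correspondences, which gives the first identity in \eqref{eq:2-39}. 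Corollary~\ref{cor:2-18} gives the block-induction identity on every $J\leq V\leq G_\psi=K$, which proves (c).
\end{itemize}
The common defect group $Q$ with $\N_J(Q)\leq L$ comes from Corollary~\ref{cor:2-17}. Corollary~\ref{cor:2-18} handles every $V$ with $J\leq V\leq K$, and the ordinary inertia group of $\psi$ in $X$ is exactly $K$, so that corollary covers every group required in Definition~\ref{def:block}(iv).

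For (d), I apply Proposition~\ref{prop:2-16} with $V=J$ and $\chi_0=\eta$. This gives $Q\cap N=Q\cap M=D$, $\N_J(Q)\leq L$, and the height formula \eqref{eq:2-34}. The degree ratio follows from $[J:T]=[L:U]$ (Lemma~\ref{lem:2-2}) together with $\eta(1)/\theta(1)=\eta'(1)/\varphi(1)$ for the tensor correspondence.

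I expect the main obstacle to be a consistency point in (b): the data built for (iii) and for (iv) must come from one and the same pair. Proposition~\ref{prop:2-10} uses the specific $\mathcal D,\mathcal D'$, and Corollary~\ref{cor:2-18} must refer to that same pair rather than merely to an isomorphic one. I will check this by noting that $\QQ\otimes\mathcal D_V$ is literally the induced representation up to a fixed basis permutation that is independent of $\QQ$. I will also confirm that the $\HH$-triple semi-invariance holds for $\psi$ in $X$ and for $\psi'$ in $Y$, which is immediate from the definitions of $X$ and $Y$.
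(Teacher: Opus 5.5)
Your proposal is correct and follows the paper's proof essentially step for step. You use Proposition~\ref{prop:2-6} for (a), then Theorem~\ref{thm:2-7} applied to $T$ and the induced pair $(\mathcal D,\mathcal D')$, checked against conditions (i)--(iii) via Lemmas~\ref{lem:2-5}, \ref{lem:2-8}, \ref{lem:2-12} and Proposition~\ref{prop:2-10}, and finally Proposition~\ref{prop:2-11} with Proposition~\ref{prop:2-16} for (c), the defect group and the heights; the only difference is that you reach Proposition~\ref{prop:2-16} through Corollaries~\ref{cor:2-17} and~\ref{cor:2-18}, where the paper invokes it directly.
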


The theorem allows $J\nleq G_\theta$ and arbitrary heights for
$\theta$ and $\varphi$. If their heights are equal,
\eqref{eq:2-41} shows that the upper correspondence preserves
heights. The common defect group and its normalizer condition follow
from the given block relation.

\begin{proof}[Proof of Theorem~\ref{thm:2-19}]
Proposition~\ref{prop:2-6} gives the fiber bijection and its
$H\times\HH$-equivariant extension. By Lemma~\ref{lem:2-2}, we
have $T\unlhd G$ and $T\leq G_\theta$, so
Theorem~\ref{thm:2-7} applies to $T$. It supplies a fixed pair
$\RR,\RR'$ associated with $\eta,\eta'$ whose mixed comparison
functions agree and whose intermediate-group correspondences are
restrictions of the original family $\tau$.

Induce this pair using the common transversal for $U$ in $L$ chosen
in Subsection~2.3. This yields associated projective representations
$\mathcal D$ and $\mathcal D'$ for $\psi$ and $\psi'$.
Lemma~\ref{lem:2-8} identifies their common factor set.
Proposition~\ref{prop:2-10} proves equality of their comparison
functions for every mixed stabilizer element, including those that
permute the irreducible constituents of $\psi_N$ and $\psi'_M$.
Lemma~\ref{lem:2-12} gives equality of the central scalars. The
required group factorizations, intersections, and centralizer
containment follow from Lemma~\ref{lem:2-5}.

Proposition~\ref{prop:2-11} gives the first equality in
\eqref{eq:2-39}. Applying Proposition~\ref{prop:2-16} to these
same correspondences gives the block equality in that formula.
The case $V=J$ also supplies the common defect group and normalizer
condition in \eqref{eq:2-40}. Thus the same induced pair satisfies
every condition of Definition~\ref{def:block}, proving
\eqref{eq:2-38}.

Finally, the degree formula for Clifford induction and the equality
$[J:T]=[L:U]$ give equality of relative degrees. The height
calculation in Subsection~\ref{subsec:2-4} proves the remaining
equality in \eqref{eq:2-41}.
\end{proof}

\begin{corollary}[Height-zero fibers with a specified defect group]\label{cor:2-20}
In the setting of Theorem~\ref{thm:2-19}, suppose that $\theta$
and $\varphi$ have equal heights, and let $Q\leq L$ be a
$p$-subgroup with $Q\cap N=D$. Let
$\Irr_0(J\mid Q,\theta^{\HH})$ denote the height-zero characters
lying in blocks with defect group $Q$ and covering a character in
$\theta^{\HH}$. Define $\Irr_0(L\mid Q,\varphi^{\HH})$
analogously. Then the same map restricts to an
$\N_H(Q)\times\HH$-equivariant bijection
\[
  \Omega_J\colon\Irr_0(J\mid Q,\theta^{\HH})
    \longrightarrow\Irr_0(L\mid Q,\varphi^{\HH}).
\]
Every corresponding pair satisfies the block $\HH$-triple relation
in Theorem~\ref{thm:2-19}.
\end{corollary}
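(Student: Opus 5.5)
The corollary follows by restricting the bijection $\Omega_J$ of Theorem~\ref{thm:2-19}(a). The work is to check that the restriction lands in the right sets and is equivariant under the smaller group $\N_H(Q)\times\HH$.

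\textbf{Step 1: the restriction is well defined on each fiber $\Irr(J\mid\theta)$.} Let $\psi\in\Irr(J\mid\theta)$ and $\psi'=\Omega_J(\psi)$. Theorem~\ref{thm:2-19}(d) gives a common defect group $Q_\psi$ of $\bl(\psi)$ and $\bl(\psi')$, together with
\[
  \htc(\psi)-\htc(\theta)=\htc(\psi')-\htc(\varphi).
\]
Since $\htc(\theta)=\htc(\varphi)$, the character $\psi$ has height zero if and only if $\psi'$ does.

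It remains to see that $Q$ is a defect group of $\bl(\psi)$ exactly when it is a defect group of $\bl(\psi')$. Defect groups of $\bl(\psi')$ form one $L$-class, and those of $\bl(\psi)$ form one $J$-class. Both classes contain $Q_\psi$, and $Q\leq L$. One direction is immediate: if $Q$ is $L$-conjugate to $Q_\psi$, then it is $J$-conjugate to $Q_\psi$. For the converse, suppose $Q=Q_\psi^j$ with $j\in J$, and write $J=NL$ (Lemma~\ref{lem:2-2}), say $j=nl$. Then $Q_\psi^n=Q^{l^{-1}}\leq L$. I would try to use $Q_\psi\cap N=D$ and $\N_N(D)\leq M$ from Theorem~\ref{thm:2-19}(d) to force $n\in M$ up to an element normalizing $Q_\psi$.

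I expect this step to be the main obstacle. A cleaner route is to invoke Proposition~\ref{prop:2-16} with $V=J$ and argue through Lemma~\ref{lem:2-15}. There the defect group comes from a block $c$ of $R_D=\N_W(D)\leq W'$. Any $J$-conjugate of $Q_\psi$ lying in $L$ and meeting $N$ in $D$ normalizes $D$ (because $D=Q\cap N\unlhd Q$). By Lemma~\ref{lem:2-13}, its conjugating element can then be moved into $\N_J(D)\leq L$ after adjusting by the Frattini factorization. This gives $L$-conjugacy. The hypothesis $Q\cap N=D$ is exactly what this argument uses.

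\textbf{Step 2: extend to Galois orbits.} For $\psi$ lying over $\theta^{h\sigma}$, use the simultaneous transform by $(h,\sigma)$. Lemma~\ref{lem:galois-blocks} shows that Galois conjugation preserves defect groups. Conjugation by $h\in H$ sends $Q$ to $Q^h$ on both sides simultaneously. So Step 1, applied to the transformed relation (Lemma~\ref{lem:transport}), gives the bijection on every fiber over $\theta^{\HH}$. These fibers are disjoint by Proposition~\ref{prop:2-6}, so the pieces assemble into a bijection between the full sets.

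\textbf{Step 3: equivariance and the triple relation.} $\Omega_J$ is $H\times\HH$-equivariant, and $\N_H(Q)\times\HH$ stabilizes both sets $\Irr_0(J\mid Q,\theta^{\HH})$ and $\Irr_0(L\mid Q,\varphi^{\HH})$. Hence the restriction is $\N_H(Q)\times\HH$-equivariant. The final sentence of the corollary, that every corresponding pair satisfies the block relation, is \eqref{eq:2-38} of Theorem~\ref{thm:2-19}(b).
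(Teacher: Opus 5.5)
Your outline is the paper's: restrict $\Omega_J$, use the height identity of Theorem~\ref{thm:2-19}(d), compare defect groups, transport by Galois automorphisms, and restrict the equivariance. However, you leave the one step with real content unproved. That step is to show that if $Q$ is a defect group of $\bl(\psi)$ with $Q\cap N=D$, then $Q$ is a defect group of $\bl(\psi')$.

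Neither of your sketches establishes this. The fact that $Q$ normalizes $D$ is not what matters, and you need neither a Frattini adjustment nor Lemma~\ref{lem:2-15}. The missing observation is that the conjugating element itself normalizes $D$. Suppose $Q=Q_\psi^j$ with $j\in J$. Theorem~\ref{thm:2-19}(d) gives $Q_\psi\cap N=D$, and $N\unlhd J$, so
\[
  D^j=(Q_\psi\cap N)^j=Q_\psi^j\cap N=Q\cap N=D.
\]
Lemma~\ref{lem:2-13} with $V=J$ gives $\N_J(D)\leq L$, so $j\in L$. Hence $Q$ is $L$-conjugate to $Q_\psi$, and so it is a defect group of $\bl(\psi')$. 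Your first attempt also closes with this computation: it gives $D^{nl}=D$, so $n\in N\cap L=M$ outright, not merely up to an element normalizing $Q_\psi$.

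There is a smaller issue in Step~2: transport only by Galois automorphisms. Every member of $\theta^{\HH}$ has the form $\theta^\sigma$, and Lemma~\ref{lem:2-1} preserves the specified group $D$. If you transport by $(h,\sigma)$ with $h\notin\N_H(D)$, the specified defect group becomes $D^h$, and the hypothesis $Q\cap N=D$ no longer matches it. Replacing $Q$ by $Q^h$ does not help, because it changes the target sets instead of describing the fiber for the fixed $Q$. With these two repairs, the rest of your argument, including Step~3, coincides with the paper's proof.
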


\begin{proof}
First fix a corresponding pair $\psi,\psi'$. Suppose that $Q$ is
a defect group of $\bl(\psi)$ and $Q\cap N=D$.
Theorem~\ref{thm:2-19} gives a common defect group $Q_0$ of the
two blocks with $Q_0\cap N=D$. Choose $j\in J$ such that
$Q_0^j=Q$. Since $N\unlhd J$,
\[
  D^j=(Q_0\cap N)^j=Q\cap N=D.
\]
Hence $j\in\N_J(D)\leq L$. Conjugation by $j$ fixes
$\bl(\psi')$, so $Q$ is also a defect group of this local block.
Conversely, if $Q$ is a defect group of $\bl(\psi')$, conjugating
$Q_0$ to $Q$ inside $L$ shows that $Q$ is also a defect group of
$\bl(\psi)$.

The same argument applies to every Galois conjugate of the given
relation, since Lemma~\ref{lem:2-1} preserves the specified common
defect group $D$. The height equality and bijectivity in
Theorem~\ref{thm:2-19} therefore give the asserted bijection on
height-zero characters. The group $\N_H(Q)$ fixes $Q$, while
$\HH$ preserves its occurrence as a defect group. Equivariance
thus follows by restricting the $H\times\HH$-equivariance of
$\Omega_J$.
\end{proof}

The argument relies on the normality of $J$ in $G$, the factorization
$G=NH$, and semi-invariance of the given $\HH$-triple. It does not
assert that arbitrary irreducible character inductions preserve a
block $\HH$-triple relation. The hypotheses of
\cite[Theorem~3.14]{NS} cannot in general be omitted. In the present
setting, the required block-theoretic hypotheses follow from
Proposition~\ref{prop:2-16} and Corollary~\ref{cor:2-17}, while
Lemma~\ref{lem:2-9} supplies the adjustment of mixed stabilizer
elements needed to compare the induced representations.

Section~3 assembles these correspondences over families of character
fibers and treats induction when the two Clifford indices differ.

\section{Induction and gluing of correspondences}

We establish two gluing results for block $\HH$-triple relations.
The first assembles correspondences on a normal subgroup into a
correspondence on a larger normal subgroup and provides a Galois
analogue of \cite[Proposition~4.7]{NS}. The second assembles
correspondences on inertia groups, as in
\cite[Proposition~7.4]{NS}. In the second construction, the two
Clifford indices need not be equal. We therefore begin with an
induction lemma that uses separate sets of coset representatives.

Throughout this section, we use the right action
\[
  \PP^{h\sigma}(x)=\PP(hxh^{-1})^\sigma.
\]
Comparison functions are normalized to equal $1$ on the relevant
normal subgroup and are constant on its cosets. Whenever a block
isomorphism is afforded by a specified pair of projective
representations, we retain that pair throughout the construction.

\subsection{Compatible choices on Galois orbits}

In each application, we first choose representatives of the relevant
group--Galois orbits on characters of the normal subgroup and fix
the required data for these representatives. We then transport both the maps on character fibers and the
pairs of representations affording them. Equivariance under the
full mixed stabilizer ensures that different transporting elements
give the same correspondence. We also check compatibility with
Clifford conjugacy, so that the resulting map is independent of the
chosen irreducible constituent on the normal subgroup.

Section~2 treats the passage from a fixed pair of characters to
characters of larger groups. Here the initial data range over a
family of character fibers. The gluing construction in
Subsection~3.4 may involve unequal Clifford indices; the following
lemma supplies the required induced representations.

\subsection{Induction with unequal Clifford indices}

\begin{lemma}\label{lem:3-1}
Let $N\unlhd A$, let $B\leq A$, and set $M=N\cap B$. Suppose that
\[
  A=NB,\qquad \psi\in\Irr(N),\qquad \psi'\in\Irr(M),
\]
that $\psi^a\in\psi^{\HH}$ for every $a\in A$ and
$(\psi')^b\in(\psi')^{\HH}$ for every $b\in B$, and that
\begin{equation}\label{eq:3-1}
  (B\times\HH)_\psi=(B\times\HH)_{\psi'}=:\Gamma.
\end{equation}
Set $G=A_\psi$ and $H=B_{\psi'}$. Let $G_1\leq G$, and write
\[
  N_1=N\cap G_1,\qquad H_1=H\cap G_1,\qquad M_1=M\cap G_1.
\]
Suppose that $G=NG_1$, $H=MH_1$, and that fixed projective
representations
\[
  \RR:G_1\longrightarrow\GL_{\eta(1)}(F),\qquad
  \RR':H_1\longrightarrow\GL_{\eta'(1)}(F)
\]
afford a block isomorphism
\begin{equation}\label{eq:3-2}
  (G_1,N_1,\eta)\sim_b(H_1,M_1,\eta').
\end{equation}
Here $F$ is a sufficiently large finite cyclotomic field,
$\eta$ is $G_1$-invariant, $\eta'$ is $H_1$-invariant, and
$\eta^N=\psi$, $(\eta')^M=\psi'$. Assume that the factor sets of
$\RR$ and $\RR'$ are inflated from the same root-of-unity-valued
factor set on $G_1/N_1\cong H_1/M_1$. Assume also that the following
conditions hold.
\begin{enumerate}
  \item For every $N\leq V\leq G$, induction gives bijections
  \[
    \Irr(V\cap G_1\mid\eta)\longrightarrow\Irr(V\mid\psi),
    \qquad
    \Irr(V\cap H_1\mid\eta')\longrightarrow\Irr(V\cap H\mid\psi').
  \]
  \item We have
  \[
    \htc(\psi)-\htc(\eta)=\htc(\psi')-\htc(\eta'),
  \]
  and some defect group $Q$ of $\bl(\psi')$ satisfies
  $\N_N(Q)\leq M$.
  \item For every $a\in\Gamma$, there is $m\in M$ such that
  $a_0=a(m^{-1},1)=(h_0,\sigma)$ normalizes
  $G_1,H_1,N_1,M_1$, fixes $\eta,\eta'$, and satisfies
  \begin{equation}\label{eq:3-3}
    \RR^{a_0}=\kappa_{a_0}A_{a_0}^{-1}\RR A_{a_0},\qquad
    (\RR')^{a_0}
       =\kappa_{a_0}|_{H_1}(A'_{a_0})^{-1}\RR'A'_{a_0},
  \end{equation}
  where $\kappa_{a_0}$ is constant on $N_1$-cosets and equals
  $1$ on $N_1$.
\end{enumerate}
Then the pair induced from $\RR,\RR'$ affords
\begin{equation}\label{eq:3-4}
  (A,N,\psi)_{\HH}\geq_b(B,M,\psi')_{\HH}.
\end{equation}
Neither $[N:N_1]=[M:M_1]$ nor $N_1\unlhd N$ is required.
\end{lemma}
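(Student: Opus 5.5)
The plan is to imitate the construction of Subsection~2.3, but with two \emph{separate} transversals. First I would record the group theory. From $A=NB$, $G=A_\psi$, $H=B_{\psi'}$ and \eqref{eq:3-1} (trivial Galois component) we get $H=B\cap G$ and $G=NH$. Then $G=NG_1$ and $H=MH_1$ give natural isomorphisms
\[
  G_1/N_1\cong G/N\cong H/M\cong H_1/M_1 .
\]
So the common factor set of $\RR,\RR'$ inflates to one factor set $\alpha$ on $G$, whose restriction to $H$ is the local one.

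Next choose a right transversal $s_1=1,\dots,s_m$ of $N_1$ in $N$, which is then a right transversal of $G_1$ in $G$. Independently, choose a right transversal $s'_1=1,\dots,s'_{m'}$ of $M_1$ in $M$, which is one of $H_1$ in $H$; here $m\ne m'$ is allowed. Define $\mathcal D$ on $G$ and $\mathcal D'$ on $H$ by the block formulas \eqref{eq:2-17} and \eqref{eq:2-18}, using these respective transversals. The computation of Lemma~\ref{lem:2-8} goes through verbatim, since it only uses $s_i\in N$ (resp. $s'_i\in M$) and the inflation of the factor set. It shows that $\mathcal D,\mathcal D'$ are associated with $(G,N,\psi)$ and $(H,M,\psi')$ and have factor sets $\alpha$ and $\alpha|_{H\times H}$. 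Normality of $N_1$ in $N$ is never used; only the coset decomposition is.

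For the mixed comparison functions, let $a\in\Gamma$ and take $a_0=a(m^{-1},1)$ as in hypothesis (iii). Since $h_0$ normalizes $N$ and $G_1$, we can write $h_0^{-1}s_ih_0=u_is_{\pi(i)}$ with $u_i\in N_1$, and do the analogous thing locally with $u'_i\in M_1$ and a permutation $\pi'$ of the other index set. Set $(B_{a_0})_{i,\pi(i)}=A_{a_0}^{-1}\RR(u_i)$ and $(B'_{a_0})_{i,\pi'(i)}=(A'_{a_0})^{-1}\RR'(u'_i)$. The block calculation of Proposition~\ref{prop:2-10} then gives
\[
  \mathcal D^{a_0}=\nu_{a_0}B_{a_0}\mathcal D B_{a_0}^{-1},\qquad
  (\mathcal D')^{a_0}=\nu_{a_0}|_H\,B'_{a_0}\mathcal D'(B'_{a_0})^{-1},
\]
where $\nu_{a_0}$ is $\kappa_{a_0}$ inflated through the displayed quotient isomorphisms. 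The scalars agree because both are read off the same quotient function $\kappa_{a_0}$, regardless of the sizes of the transversals. Composing with the inner correction by $\mathcal D(m)$, $\mathcal D'(m)$ as in \eqref{eq:2-25}--\eqref{eq:2-26} gives $\mu'_a=\mu_a|_H$. The Schur-lemma argument at the end of that proof gives independence of the choice of $m$. For the central-scalar and centralizer condition, I would note that $\C_A(N)\le\C_G(N)\le G$. I expect $\C_A(N)\le H$ to follow from the normalizer condition in (ii), via the Frattini argument of Lemma~\ref{lem:criterion}. Then, for $c\in\C_A(N)$, both $\mathcal D(c)$ and $\mathcal D'(c)$ are block diagonal with blocks $\RR(c)$ and $\RR'(c)$. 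These are scalar with the same scalar by the central-scalar part of \eqref{eq:3-2}, as in Lemma~\ref{lem:2-12}.

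For the intermediate groups $N\le V\le G$, the tensor-induction identity of Proposition~\ref{prop:2-11} shows that the correspondence afforded by $(\mathcal D,\mathcal D')$ is
\[
  \chi=\chi_1^V\longmapsto \tau_{V\cap G_1}(\chi_1)^{V\cap H}.
\]
This uses hypothesis (i) to know that both inductions are bijective. The remaining and main obstacle is the block equality $\bl(\tau_{V\cap G_1}(\chi_1)^{V\cap H})^V=\bl(\chi_1^V)$ together with a common defect group, now that the Clifford indices differ. Here I would invoke \cite[Theorem~3.14]{NS}: hypothesis (ii) supplies exactly its height-difference and $\N_N(Q)\le M$ hypotheses, and \eqref{eq:3-2} supplies its block isomorphism. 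That theorem then yields the ordinary block isomorphism $(G,N,\psi)\sim_b(H,M,\psi')$, afforded by the induced pair. I would check carefully that its induced pair coincides with ours up to the choice of transversal, using Lemma~\ref{lem:2-3}-type covariance. Finally, Lemma~\ref{lem:criterion}(ii)$\Rightarrow$(i) combines this with the $\geq_c$-relation established above to give \eqref{eq:3-4}.
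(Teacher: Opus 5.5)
Your construction of the induced pair and your comparison-function computation are the paper's. This covers the separate transversals for $N_1$ in $N$ and $M_1$ in $M$, the block-monomial intertwiners built from $A_{a_0}^{-1}\RR(\cdot)$ and $(A'_{a_0})^{-1}\RR'(\cdot)$, the inflated scalar $\nu_{a_0}$, the correction by $\mathcal D(m)$, and the tensor identity on intermediate groups, with the block part drawn from \cite[Theorem~3.14]{NS}.

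The main gap is at the end. Lemma~\ref{lem:criterion} has as a standing hypothesis that $\bl(\psi)$ and $\bl(\psi')$ have a \emph{common} defect group $Q$ with $\N_N(Q)\leq M$, and Definition~\ref{def:block}(iv) requires this as well. Hypothesis~(ii) gives such a $Q$ only for $\bl(\psi')$. The ordinary block isomorphism of \cite[Definition~3.6]{NS} does not contain the rest, which is why the criterion lists it separately; block induction can enlarge defect groups. This is exactly where the height condition in~(ii) must be used, rather than merely passed on to \cite{NS}. The paper takes a common defect group $D_1$ of $\bl(\eta)$ and $\bl(\eta')$. The equalities $\bl(\eta')^M=\bl(\psi')$ and $\bl(\psi')^N=\bl(\psi)$ then allow a choice $D_1\leq Q\leq\widetilde Q$, with $\widetilde Q$ a defect group of $\bl(\psi)$. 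Since $\psi'=(\eta')^M$ and $\psi=\eta^N$, the degree formula gives $|Q:D_1|=p^{\htc(\psi')-\htc(\eta')}$ and $|\widetilde Q:D_1|=p^{\htc(\psi)-\htc(\eta)}$. Hence (ii) forces $Q=\widetilde Q$, and conjugating within $M$ preserves $\N_N(Q)\leq M$.

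There is also a smaller gap in the central-scalar step. To see that $\mathcal D(c)$ and $\mathcal D'(c)$ are block diagonal with blocks $\RR(c)$ and $\RR'(c)$, you need $c\in G_1$ and $c\in H_1$. This does not follow ``as in Lemma~\ref{lem:2-12}'': there the corresponding subgroup was the stabilizer of a constituent, whereas here $G_1$ is arbitrary, and your Frattini argument yields only $c\in H$. The paper extends $\psi$ to $\langle N,c\rangle$ and uses~(i): the extension is induced from $\langle N,c\rangle\cap G_1$ but does not vanish at the central element $c$, so $c\in G_1$. A shorter route is to note that $\mathcal D(c)$ commutes with the irreducible $\mathcal D_N$, so it is a nonzero scalar matrix. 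Its $(1,1)$ block is then nonzero, which forces $c\in G_1$. Then $c\in\C_{G_1}(N_1)\leq H_1$, and the central-scalar part of \eqref{eq:3-2} gives equal scalars.

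Finally, matching your pair with the one in \cite{NS} needs no covariance argument. A change of transversal conjugates each induced representation by a fixed block-monomial matrix, so the afforded correspondences are unchanged. Since you already have the tensor identity, you can also finish directly by transitivity of block induction along $V\cap H_1\leq V\cap G_1\leq V$ and $V\cap H_1\leq V\cap H\leq V$, as the paper does.
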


\begin{proof}
Taking ordinary stabilizers in \eqref{eq:3-1} gives
$H=G\cap B$. Since $A=NB$ and $N$ acts trivially on $\psi$,
we have $G=NH$. The given block isomorphism yields
$G_1=N_1H_1$. Moreover, $N_1\unlhd G_1$, and the natural maps
induce isomorphisms
\begin{equation}\label{eq:3-5}
  G_1/N_1\cong G/N\cong H/M\cong H_1/M_1.
\end{equation}
Let $\alpha_0$ be the common factor set on these quotients, and
let $\alpha$ be its inflation to $G$.

Choose right coset representatives $r_1=1,\ldots,r_s$ for $N_1$
in $N$ and $s_1=1,\ldots,s_t$ for $M_1$ in $M$. These also
represent $G_1\backslash G$ and $H_1\backslash H$, respectively.
Define
\begin{equation}\label{eq:3-6}
  \PP(x)_{ij}=
  \begin{cases}
    \RR(r_i x r_j^{-1}),&r_i x r_j^{-1}\in G_1,\\
    0,&\text{otherwise},
  \end{cases}
  \qquad x\in G,
\end{equation}
and
\begin{equation}\label{eq:3-7}
  \PP'(y)_{ij}=
  \begin{cases}
    \RR'(s_i y s_j^{-1}),&s_i y s_j^{-1}\in H_1,\\
    0,&\text{otherwise},
  \end{cases}
  \qquad y\in H.
\end{equation}
The matrices have $s$ and $t$ block rows, respectively, and their
underlying permutations need not agree. If two consecutive blocks
in the first matrix product are nonzero, then
\[
\begin{aligned}
  \RR(r_i x r_j^{-1})\RR(r_j y r_k^{-1})
    &={}\alpha_0\bigl((r_i x r_j^{-1})N_1,
                       (r_j y r_k^{-1})N_1\bigr)
          \RR(r_i xy r_k^{-1})\\
    &={}\alpha(x,y)\RR(r_i xy r_k^{-1}).
\end{aligned}
\]
The second equality follows from $r_i,r_j,r_k\in N$ and
\eqref{eq:3-5}. Thus $\PP$ has factor set $\alpha$. In the
second matrix product, the corresponding scalar is
\[
  \alpha_0\bigl((s_i x s_j^{-1})M_1,
                (s_j y s_k^{-1})M_1\bigr)=\alpha(x,y).
\]
Hence $\PP'$ has factor set $\alpha|_{H\times H}$.
The restrictions of $\PP$ and $\PP'$ to $N$ and $M$ are the
ordinary induced representations affording $\psi$ and $\psi'$,
respectively. Since $\alpha$ is inflated from $G/N$, these
projective representations satisfy the normalization conditions
for associated representations.

We next determine their comparison functions. Fix
$a_0=(h_0,\sigma)$ as in~(iii). Since $h_0$ normalizes
$N,M,N_1,M_1$, there are unique permutations $\pi,\pi'$ and
elements $t_i\in N_1$, $u_j\in M_1$ such that
\begin{equation}\label{eq:3-8}
  h_0^{-1}r_i h_0=t_i r_{\pi(i)},\qquad
  h_0^{-1}s_j h_0=u_j s_{\pi'(j)}.
\end{equation}
Let $\nu_{a_0}$ be the inflation of $\kappa_{a_0}$ to $G$ via
\eqref{eq:3-5}. Define invertible block permutation matrices by
\[
  (B_{a_0})_{i,\pi(i)}=A_{a_0}^{-1}\RR(t_i),\qquad
  (B'_{a_0})_{j,\pi'(j)}=(A'_{a_0})^{-1}\RR'(u_j),
\]
with all other blocks equal to zero. If
$v=r_{\pi(i)}x r_{\pi(j)}^{-1}\in G_1$, then
\[
\begin{aligned}
  \PP^{a_0}(x)_{ij}
    &={}\RR(h_0t_i v t_j^{-1}h_0^{-1})^\sigma\\
    &={}\kappa_{a_0}(t_i v t_j^{-1})
         A_{a_0}^{-1}\RR(t_i v t_j^{-1})A_{a_0}\\
    &={}\nu_{a_0}(x)A_{a_0}^{-1}\RR(t_i)
         \PP(x)_{\pi(i),\pi(j)}\RR(t_j)^{-1}A_{a_0}.
\end{aligned}
\]
Here $\kappa_{a_0}$ is constant on $N_1$-cosets, and the factor
set equals $1$ whenever one argument belongs to $N_1$. If
$v\notin G_1$, the corresponding blocks on both sides vanish.
Consequently,
\[
  \PP^{a_0}=\nu_{a_0}B_{a_0}\PP B_{a_0}^{-1}.
\]
On the local side, expansion using \eqref{eq:3-8} gives the
nonzero block
\[
\begin{aligned}
  &\kappa_{a_0}\bigl(u_i s_{\pi'(i)}y
                         s_{\pi'(j)}^{-1}u_j^{-1}\bigr)
    (A'_{a_0})^{-1}\RR'(u_i)\\[-2pt]
  &\hspace{5em}{}
    \cdot\PP'(y)_{\pi'(i),\pi'(j)}\RR'(u_j)^{-1}A'_{a_0}.
\end{aligned}
\]
By \eqref{eq:3-5}, the scalar is $\nu_{a_0}(y)$. Therefore
\begin{equation}\label{eq:3-9}
  (\PP')^{a_0}
    =\nu_{a_0}|_H B'_{a_0}\PP'(B'_{a_0})^{-1}.
\end{equation}
Thus equality of the comparison functions does not require the
same number of cosets on the two sides.

Write $a=a_0(m,1)$, where $m\in M\leq N$. The associated
representations satisfy
\[
  \PP^{(m,1)}=\PP(m)\PP\PP(m)^{-1},\qquad
  (\PP')^{(m,1)}=\PP'(m)\PP'\PP'(m)^{-1}.
\]
Also, $\nu_{a_0}^{(m,1)}=\nu_{a_0}$ because $\nu_{a_0}$ is
inflated from $G/N$. Applying $(m,1)$ to the preceding identities
gives
\[
\begin{aligned}
  \PP^a
    &={}\nu_{a_0}\bigl(B_{a_0}\PP(m)\bigr)
          \PP\bigl(B_{a_0}\PP(m)\bigr)^{-1},\\
  (\PP')^a
    &={}\nu_{a_0}|_H\bigl(B'_{a_0}\PP'(m)\bigr)
          \PP'\bigl(B'_{a_0}\PP'(m)\bigr)^{-1}.
\end{aligned}
\]
Hence the mixed comparison functions agree on $H$. The resulting
function is independent of the choice of $m$: two choices give
normalized intertwining identities whose restrictions to the
irreducible representation $\PP_N$ differ by a scalar, by
Schur's lemma. Substitution into the identities on $G$ then shows
that the functions agree pointwise. The same argument applies to
$\PP'$.

It remains to verify the block and central conditions for this
pair. Conditions~(i) and~(ii) are the hypotheses of
\cite[Theorem~3.14]{NS}, and \eqref{eq:3-6} and
\eqref{eq:3-7} are the representations used in its proof.
We recall the argument to identify the resulting block
correspondence.

Let $D_1$ be a common defect group of $\bl(\eta)$ and
$\bl(\eta')$. The block results for irreducible induction,
together with transitivity of block induction, give
\[
  \bl(\eta')^M=\bl(\psi'),\qquad
  \bl(\eta)^N=\bl(\psi),\qquad
  \bl(\psi')^N=\bl(\psi).
\]
We may choose defect groups $D_1\leq Q\leq\widetilde Q$,
where $Q$ is a defect group of $\bl(\psi')$ and
$\widetilde Q$ is a defect group of $\bl(\psi)$. The degree
formula for induction gives
\[
  |Q:D_1|=p^{\htc(\psi')-\htc(\eta')},\qquad
  |\widetilde Q:D_1|=p^{\htc(\psi)-\htc(\eta)}.
\]
Condition~(ii) therefore implies $Q=\widetilde Q$. Conjugation
of $Q$ within $M$ preserves $\N_N(Q)\leq M$, so this
normalizer condition can be maintained. The existence of these
block inductions and defect group inclusions is the first part of
the proof of \cite[Theorem~3.14]{NS}.

Let $x\in\C_G(N)$, and choose an extension of $\psi$ to
$\langle N,x\rangle$. Since $x$ is central in this group, the
extension has nonzero value at $x$. By~(i), it is induced from
$\langle N,x\rangle\cap G_1$. Thus $x$ is conjugate within
$\langle N,x\rangle$ to an element of that subgroup. As $x$
is central, it follows that $x\in G_1$, and hence
$x\in\C_{G_1}(N_1)\leq H_1$. The matrices $\RR(x)$ and
$\RR'(x)$ are scalar with the same scalar. Since $x$
centralizes every $r_i$ and $s_j$, the induced matrices have
that same scalar. Finally, $\C_A(N)\leq G$, so
$\C_A(N)\leq B$.

Let $N\leq V\leq G$, and set
\[
  V_1=V\cap G_1,\qquad V'=V\cap H,\qquad V'_1=V\cap H_1.
\]
Take $\xi_1\in\Irr(V_1\mid\eta)$ and set $\xi=\xi_1^V$.
If $\tau_{V_1}^{(1)}$ is the correspondence afforded by
$\RR,\RR'$, then the correspondence afforded by their induced
pair satisfies
\begin{equation}\label{eq:3-10}
  \tau_V(\xi)=\bigl(\tau_{V_1}^{(1)}(\xi_1)\bigr)^{V'}.
\end{equation}
To see this, take the projective representation $\mathcal S$ on
$V_1/N_1\cong V/N$ used in the tensor construction. In each
nonzero block, $\mathcal S(r_i x r_j^{-1})$ is the value at
$x$ of its inflation from $V/N$. Tensoring with $\mathcal S$
therefore commutes with the induction matrix. The same argument
applies locally because $s_j\in M\leq N$. This proves
\eqref{eq:3-10} for the fixed pair $\PP,\PP'$.

Set $\xi'_1=\tau_{V_1}^{(1)}(\xi_1)$. The given block
condition is $\bl(\xi'_1)^{V_1}=\bl(\xi_1)$. Applying
block induction along $V'_1\leq V_1\leq V$ and
$V'_1\leq V'\leq V$ gives
\[
\begin{aligned}
  \bl\bigl(\tau_V(\xi)\bigr)^V
    &={}\bigl(\bl(\xi'_1)^{V'}\bigr)^V
     =\bigl(\bl(\xi'_1)^{V_1}\bigr)^V\\
    &={}\bl(\xi_1)^V=\bl(\xi).
\end{aligned}
\]
These inductions are defined by the common defect group and
normalizer conditions, as in the final part of the proof of
\cite[Theorem~3.14]{NS}. Thus the same pair satisfies the block
conditions and all mixed comparison conditions. Together with
\eqref{eq:3-1} and the group conditions, this proves
\eqref{eq:3-4}.
\end{proof}

\subsection{Gluing over a normal subgroup}

\begin{theorem}\label{thm:3-2}
Let $K\unlhd A$, let $H\leq A$, and set $M=K\cap H$.
Let $D_0$ be a $p$-subgroup of $M$ such that
\[
  A=KH,\qquad H=M\N_A(D_0).
\]
Suppose that there is an $\N_A(D_0)\times\HH$-equivariant
bijection
\[
  \Lambda:\Irr_0(K\mid D_0)\longrightarrow\Irr_0(M\mid D_0)
\]
such that every pair $\theta,\theta'=\Lambda(\theta)$ satisfies
\begin{equation}\label{eq:3-11}
  (A_{\theta^{\HH}},K,\theta)_{\HH}
    \geq_b(H_{(\theta')^{\HH}},M,\theta')_{\HH}.
\end{equation}
Let $K\leq J\unlhd A$, and let $Q\leq J$ be a $p$-subgroup
with $Q\cap K=D_0$. Then there is an
$\N_H(Q)\times\HH$-equivariant bijection
\begin{equation}\label{eq:3-12}
  \Omega_J:\Irr_0(J\mid Q)\longrightarrow\Irr_0(J\cap H\mid Q)
\end{equation}
such that every pair $\psi,\psi'=\Omega_J(\psi)$ satisfies
\[
  (A_{\psi^{\HH}},J,\psi)_{\HH}
    \geq_b(H_{(\psi')^{\HH}},J\cap H,\psi')_{\HH}.
\]
\end{theorem}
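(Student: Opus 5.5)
The idea is to assemble $\Omega_J$ fiber by fiber, applying Theorem~\ref{thm:2-19} to each relation \eqref{eq:3-11} with $N=K$, and then to check that the pieces are compatible and exhaust both sides. Write $L=J\cap H$.

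\textbf{Step 1: reduction to the fibers.} Let $\psi\in\Irr_0(J\mid Q)$. Choose a constituent $\theta$ of $\psi_K$. By the normal-subgroup intersection theorem, $Q\cap K=D_0$ is a defect group of $\bl(\theta)$. I will use that $\psi$ has height zero together with Fong--Reynolds and $Q\cap K=D_0$ to argue that $\theta$ has height zero, so $\theta\in\Irr_0(K\mid D_0)$ after $J$-conjugation. The same argument on the local side works because $H=M\N_A(D_0)$ (so $L=M\N_L(D_0)$). Hence
\[
\Irr_0(J\mid Q)=\bigcup_{\theta}\Irr_0\bigl(J\mid Q,\theta^{\HH}\bigr),
\]
with $\theta$ running over representatives of the $J\times\HH$-orbits on $\Irr_0(K\mid D_0)$. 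The local side decomposes in the same way. Since $J=K\N_J(D_0)$ and $\N_J(D_0)\le L$, these orbits are the $\N_L(D_0)\times\HH$-orbits. By equivariance of $\Lambda$, they are matched with the corresponding orbits on $\Irr_0(M\mid D_0)$.

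\textbf{Step 2: define the bijection on each fiber.} For each representative $\theta$, take the relation \eqref{eq:3-11} with $\theta'=\Lambda(\theta)$. It has common defect group $D_0$, and both characters have height zero, so their heights are equal. Apply Theorem~\ref{thm:2-19} with $G=A_{\theta^{\HH}}$ and with the normal subgroup $J\cap A_{\theta^{\HH}}$. Then apply Corollary~\ref{cor:2-20} to get an $\N(Q)\times\HH$-equivariant bijection
\[
\Irr_0(J\cap A_{\theta^{\HH}}\mid Q,\theta^{\HH})\to\Irr_0(L\cap H_{(\theta')^{\HH}}\mid Q,(\theta')^{\HH}).
\]
The ambient group in Theorem~\ref{thm:2-19} is the full stabilizer and not $A$. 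So I compose with Clifford induction from the orbit stabilizer to $J$, and locally to $L$. The Clifford indices agree because the orbit stabilizers factor compatibly: $A_{\theta^{\HH}}=KH_{(\theta')^{\HH}}$, by equality of the mixed stabilizers. The resulting block relations for $J$ versus $L$ then follow from Lemma~\ref{lem:3-1}, applied with $G_1$ equal to the inertia data from Theorem~\ref{thm:2-19}\textup{(b)}.

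Conditions (i)--(iii) of that lemma are checked as follows:
\begin{itemize}
\item[(i)] Clifford bijections.
\item[(ii)] The height equality of Theorem~\ref{thm:2-19}\textup{(d)} and Proposition~\ref{prop:2-16}, together with $\N_J(Q)\le L$.
\item[(iii)] Correction of mixed stabilizer elements by elements of $L$, as in Lemma~\ref{lem:2-9}.
\end{itemize}

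\textbf{Step 3: well-definedness and equivariance.} Fix representatives of the $\N_H(Q)\times\HH$-orbits on $\Irr_0(J\mid Q)$. Transport the pairs of representations by simultaneous transforms, using Lemma~\ref{lem:transport}(ii),(iii). Independence of the choices comes from Lemma~\ref{lem:2-3} and the uniqueness statement in Theorem~\ref{thm:2-19}(a), exactly as in Proposition~\ref{prop:2-6}. The result is an $\N_H(Q)\times\HH$-equivariant bijection \eqref{eq:3-12}. Preservation of the defect group $Q$ itself, rather than merely of a $J$-conjugate of it, follows from the conjugation argument of Corollary~\ref{cor:2-20}, using $\N_J(D_0)\le L$.

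\textbf{Main obstacle.} I expect the hardest step to be Step~1 together with the orbit bookkeeping. The difficulty is twofold:
\begin{itemize}
\item showing that height-zero characters of $J$ with defect group $Q$ lie exactly over height-zero characters of $K$ with defect group $D_0$, and likewise locally;
\item ensuring that equivariance under $\N_H(Q)$, rather than under $\N_A(D_0)$, is compatible with the choice of orbit representatives.
\end{itemize}
I would try first to derive the height-zero claim from the relative-height formula of Proposition~\ref{prop:2-16} applied to $V=J$, rather than from a direct degree computation.
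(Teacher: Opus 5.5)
Your architecture works and differs from the paper's. You apply Theorem~\ref{thm:2-19} and Corollary~\ref{cor:2-20} on the orbit stabilizer $J^*=J\cap A_{\theta^{\HH}}$, and then apply Lemma~\ref{lem:3-1} once more, with $N_1=J^*$, to reach $J$. The paper instead starts from the homogeneous Theorem~\ref{thm:2-7} on $T=J_\theta$ and applies Lemma~\ref{lem:3-1} once, from $T$ to $J$. Both give $\psi\mapsto\tau_T(\eta)^L$.

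The genuine gap is in Step~3, at the obstacle you flag, and your proposed remedy does not close it. In Step~1 you fix a pair affording \eqref{eq:3-11} independently for each representative of a $J\times\HH$-orbit on $\Irr_0(K\mid D_0)$. Since $\N_H(Q)$ need not lie in $J$, some $a\in\N_H(Q)\times\HH$ can carry the fiber over one representative $\theta$ onto the fiber over another representative $\theta_1$. That fiber then carries two bijections: the one afforded by the pair for $\theta_1$, and the $a$-transport of the one for $\theta$. Correspondences afforded by different pairs need not agree; the uniqueness in Theorem~\ref{thm:2-19}(a) is only relative to one fixed pair and its transforms. Choosing arbitrary representatives of the $\N_H(Q)\times\HH$-orbits on $\Irr_0(J\mid Q)$ and extending by equivariance does not fix this. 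If two representatives lie in distinct but $\N_H(Q)\times\HH$-conjugate fibers, the induced map on orbits need not be injective. The choice must be made at the level of $\theta$, coherently under a group containing both $\N_H(Q)$ and $J$. As in the paper, fix a pair only on representatives of the $\N_A(D_0)\times\HH$-orbits on $\Irr_0(K\mid D_0)$ and transport it to every $\theta$. These are the $H\times\HH$-orbits, since $H=M\N_A(D_0)$ and $M\leq K$ acts trivially. This gives the covariance \eqref{eq:3-14}. From it, the $L$-correction argument gives independence of all choices, equivariance, and the mixed-stabilizer equality \eqref{eq:3-17} over all of $H\times\HH$, which you need for \eqref{eq:3-1}.

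Two further steps need a different argument.

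First, with $\theta$ a fixed orbit representative, $Q$ need not lie in $J^*$. The character $\chi$ of $J^*$ over $\theta$ with $\chi^J=\psi$ may have defect group $Q^n$ for some $n\in\N_J(D_0)$, with $Q^n$ not $J^*$-conjugate to $Q$. So Corollary~\ref{cor:2-20} must be applied to $Q^n$; this is allowed, as $n\in L$ and $Q^n\cap K=D_0$. The paper avoids this by choosing a constituent adapted to $Q$ via \cite[Proposition~2.5]{NS}. Part~(f) there makes the adapted constituents a single $\N_J(Q)$-orbit, with $\N_J(Q)\leq L$. The same result, not Proposition~\ref{prop:2-16}, gives the descent of height zero from $\psi$ to $\theta$ in your Step~1. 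Proposition~\ref{prop:2-16} presupposes the relation for $\theta$ and needs $V\leq A_{\theta^{\HH}}$, which fails for $V=J$ in general.

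Second, for the same reason you must show directly that $\psi'=(\chi')^L$ lies in $\Irr_0(L\mid Q)$, and, for surjectivity, that the preimage lies in $\Irr_0(J\mid Q)$. The two induction chains give $\bl(\psi')^J=\bl(\psi)$. This traps the defect group of $\bl(\psi')$ between an $L$-conjugate of $Q$ and a $J$-conjugate of $Q$, and the degree count then gives height zero. Once $\psi,\chi,\psi',\chi'$ all have height zero, condition~(ii) of Lemma~\ref{lem:3-1} is immediate.
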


\begin{proof}
Set $L=J\cap H$. Then
\begin{equation}\label{eq:3-13}
  J=KL,\qquad \N_A(D_0)\leq H,\qquad
  \N_J(Q)\leq\N_J(D_0)\leq L.
\end{equation}
In particular, $Q\leq L$. The inner action of $M$ fixes the
characters on both sides of $\Lambda$. Since
$H=M\N_A(D_0)$, the map $\Lambda$ is also
$H\times\HH$-equivariant.

Choose representatives of the $\N_A(D_0)\times\HH$-orbits on
$\Irr_0(K\mid D_0)$. These also represent the
$H\times\HH$-orbits, since $M$ acts trivially. For each
representative $\theta$, fix a pair affording
\eqref{eq:3-11}. Transport both representations by the same
group and Galois actions to define families of correspondences
$\tau^{(\theta)}$ for all $\theta$. If two transporting
elements give the same $\theta$, their difference belongs to
its mixed stabilizer. Equality of comparison functions and the
tensor construction give
\begin{equation}\label{eq:3-14}
  \tau_{W^h}^{(\theta^a)}(\xi^a)
    =\bigl(\tau_W^{(\theta)}(\xi)\bigr)^a,
  \qquad a=(h,\sigma),\quad K\leq W\leq A_\theta.
\end{equation}
More explicitly, if
$\xi=\tr(\mathcal S\otimes\PP_W)$, then both transformed
tensor constructions use the factor $\mu_a\mathcal S^a$.
Thus \cite[Lemma~1.9(a)]{NSV}, applied to the fixed pair,
proves independence of the transporting choices.

Let $\psi\in\Irr_0(J\mid Q)$. By
\cite[Proposition~2.5(a),(e),(f)]{NS}, there is
$\theta\in\Irr_0(K\mid D_0)$ whose Clifford correspondent
satisfies
\[
  T=J_\theta,\qquad
  \eta\in\Irr_0(T\mid Q,\theta),\qquad \eta^J=\psi.
\]
We call such a constituent \emph{$Q$-adapted} to $\psi$.
Set $U=T\cap H=L_{\theta'}$, and define
\begin{equation}\label{eq:3-15}
  \eta'=\tau_T^{(\theta)}(\eta),\qquad \psi'=(\eta')^L.
\end{equation}
Since $J\unlhd A$ and $A_\theta\unlhd A_{\theta^{\HH}}$,
we have $T\unlhd A_{\theta^{\HH}}$. Also $T\leq A_\theta$.
We may therefore apply Theorem~\ref{thm:2-7} inside $A_{\theta^{\HH}}$,
with intermediate normal subgroup $T$. It gives a fixed pair
affording the block $\HH$-triple relation for $\eta,\eta'$.
Moreover, $\eta'$ has height zero, and its block has the
specified defect group $Q$. To verify the latter assertion,
take the common defect group $Q_1$ supplied by homogeneous
transfer, with $Q_1\cap K=D_0$. If $Q_1^t=Q$ for $t\in T$,
then $t\in\N_T(D_0)\leq U$. Thus $Q$ is also a defect group
of $\bl(\eta')$.

Since $U=L_{\theta'}$ and $\eta'$ lies over $\theta'$,
the induced character $\psi'$ is irreducible. The equality
$\bl(\eta')^T=\bl(\eta)$ and the two subgroup chains give
\begin{equation}\label{eq:3-16}
  \bl(\psi')^J=\bl(\psi).
\end{equation}
Now $\bl(\eta')^L=\bl(\psi')$ has a defect group
$R\geq Q$, whereas \eqref{eq:3-16} places $R$ in a
$J$-conjugate of $Q$. Hence $R=Q$. This is the block
induction argument in the proof of
\cite[Proposition~4.7(b)]{NS}. Furthermore,
\[
  \psi'(1)_p|Q|=[L:U]_p\eta'(1)_p|Q|=|L|_p,
\]
so $\psi'$ has height zero.

Suppose that $\theta_1$ is another $Q$-adapted constituent,
with Clifford correspondent $\eta_1$. By
\cite[Proposition~2.5(f)]{NS}, there is $x\in\N_J(Q)$ such
that $(\theta_1,\eta_1)=(\theta^x,\eta^x)$.
Equation~\eqref{eq:3-14} replaces the local correspondent by
$(\eta')^x$. Since $x\in L$, its induction to $L$ remains
$\psi'$. Thus \eqref{eq:3-12} is well-defined, and the same
identity proves $\N_H(Q)\times\HH$-equivariance.

We next prove bijectivity. Given $\psi'\in\Irr_0(L\mid Q)$,
choose a $Q$-adapted constituent $\theta'$ on $M$ and its
height-zero Clifford correspondent $\eta'$. Let
$\theta=\Lambda^{-1}(\theta')$ and
$\eta=(\tau_T^{(\theta)})^{-1}(\eta')$. Equivariance of
$\Lambda$ identifies the ordinary inertia groups, and
$\eta^J$ is irreducible. Since $\N_J(Q)\leq L$, the Brauer
correspondent $\bl(\psi')^J$ is defined and has defect group
$Q$. Block induction along $U\leq T\leq J$ and
$U\leq L\leq J$ identifies it with $\bl(\eta)^J$.
Thus $\bl(\eta^J)$ has defect group $Q$, and the degree
formula shows that $\eta^J$ has height zero. This constructs a preimage.
If two characters have the same local image, their corresponding
constituents on $M$ are $L$-conjugate. Using
\eqref{eq:3-14}, we may choose the same constituent on both
sides. Uniqueness of the local Clifford correspondent and
injectivity of $\tau_T^{(\theta)}$ then give the same
Clifford correspondent on $T$. Induction to $J$ proves injectivity.

For each corresponding pair, we claim that
\begin{equation}\label{eq:3-17}
  (H\times\HH)_\psi=(H\times\HH)_{\psi'}.
\end{equation}
Suppose that $a=(h,\sigma)$ fixes $\psi$. Then $\theta^a$
is a constituent of $\psi_K$, so $\theta^a=\theta^j$ for
some $j\in J$. Since $J=KL$, there is $l\in L$ such that
$\theta^a=\theta^l$. Hence $a_0=a(l^{-1},1)$ fixes
$\theta,\psi$ and normalizes $T$. Clifford uniqueness gives
$\eta^{a_0}=\eta$, and \eqref{eq:3-14} gives
$(\eta')^{a_0}=\eta'$. Induction shows that $a$ fixes
$\psi'$. Conversely, if $a$ fixes $\psi'$, use a
constituent $\theta'$ on $M$ to choose $l\in L$ such that
$a_0=a(l^{-1},1)$ fixes $\theta'$. Equality of the given
mixed stabilizers implies that $a_0$ fixes $\theta$.
Local Clifford uniqueness and injectivity of
$\tau_T^{(\theta)}$ then show that it fixes $\eta$.
Induction gives $\psi^a=\psi$, proving \eqref{eq:3-17}.

Write $A^*=A_{\psi^{\HH}}$ and
$B^*=H_{(\psi')^{\HH}}$. Since $A=KH$ and $K\leq J$,
equation~\eqref{eq:3-17} gives $A^*=JB^*$ and
$J\cap B^*=L$. Set
\[
\begin{gathered}
  G=(A^*)_\psi,\qquad H^*=(B^*)_{\psi'},\\
  G_1=G_\theta=G_\eta,\qquad
  H_1=(H^*)_{\theta'}=(H^*)_{\eta'}.
\end{gathered}
\]
Apply Lemma~\ref{lem:3-1} with $A^*,B^*,J,L,H^*$ in
place of $A,B,N,M,H$, respectively. Thus $N_1=T$ and
$M_1=U$. Homogeneous transfer gives \eqref{eq:3-2} and
equality of the comparison functions for $a_0$. Clifford
theory gives $G=JG_1$ and $H^*=LH_1$.

For completeness, we verify condition~(i) for every
$J\leq V\leq G$. We have $K\unlhd V$ and
$V_\theta=V\cap G_1$. If $\xi\in\Irr(V\mid\psi)$,
then $\xi_J$ is a multiple of $\psi$. Its Clifford
correspondent over $\theta$ restricts to $T$ with no
constituent other than $\eta$, because $\eta$ is the
unique Clifford correspondent of $\psi$ over $\theta$.
Conversely, every character in $\Irr(V_\theta\mid\eta)$
lies over $\theta$, so it induces irreducibly to $V$, and
its restriction to $J$ lies over $\eta^J=\psi$.
This proves the first bijection in~(i). Locally, apply the same
Clifford uniqueness argument to the normal subgroup
$M=K\cap H$ of $V\cap H^*$. Since $L\leq V\cap H^*$
and $U=L_{\theta'}$, it identifies the inductions of
$\Irr(V\cap H_1\mid\eta')$ with
$\Irr(V\cap H^*\mid\psi')$. All four characters
$\psi,\eta,\psi',\eta'$ have height zero, and
$\N_J(Q)\leq L$, so~(ii) holds. The element $l\in L$
constructed above verifies~(iii). Lemma~\ref{lem:3-1}
therefore proves the asserted block $\HH$-triple relation.
\end{proof}

\subsection{Gluing centralizing correspondences}

\begin{theorem}\label{thm:3-3}
Let $K\unlhd A$, let $K\leq X\unlhd A$, and let
$D\leq X$ be a $p$-subgroup. Write
\[
  D_0=K\cap D,\qquad C=\N_A(D),\qquad
  Y=K\N_X(D),\qquad E=KC,
\]
and define
\[
  \mathcal F=\{\zeta\in\Irr_0(K\mid D_0)\mid
                    \zeta^d=\zeta\text{ for every }d\in D\},
  \qquad \Gamma=C\times\HH.
\]
For each representative $\zeta$ of the $\Gamma$-orbits on
$\mathcal F$, write
\[
  A^*_\zeta=A_{\zeta^{\HH}},\qquad T_\zeta=X_\zeta,
  \qquad U_\zeta=K\N_{T_\zeta}(D),\qquad
  E_\zeta=K\N_{A^*_\zeta}(D).
\]
Suppose that there is a $\Gamma_\zeta$-equivariant bijection
\begin{equation}\label{eq:3-18}
  F_\zeta:\Irr_0(T_\zeta\mid D,\zeta)
       \longrightarrow\Irr_0(U_\zeta\mid D,\zeta)
\end{equation}
such that every pair $\eta,\eta'=F_\zeta(\eta)$ satisfies
\begin{equation}\label{eq:3-19}
  \bigl((A^*_\zeta)_{\eta^{\HH}},T_\zeta,\eta\bigr)_{\HH}
    \geq_b
  \bigl((E_\zeta)_{(\eta')^{\HH}},U_\zeta,\eta'\bigr)_{\HH}.
\end{equation}
The sets in \eqref{eq:3-18} use the specified defect group
$D$, and one pair of projective representations affords both
the block and mixed comparison conditions in
\eqref{eq:3-19}. Then there is a $C\times\HH$-equivariant
bijection
\begin{equation}\label{eq:3-20}
  F:\Irr_0(X\mid D)\longrightarrow\Irr_0(Y\mid D)
\end{equation}
such that every pair $\chi,\chi'=F(\chi)$ satisfies
\begin{equation}\label{eq:3-21}
  (A_{\chi^{\HH}},X,\chi)_{\HH}
    \geq_b(E_{(\chi')^{\HH}},Y,\chi')_{\HH}.
\end{equation}
\end{theorem}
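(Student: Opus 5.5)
The plan is to model the argument on \cite[Proposition~7.4]{NS}, with Lemma~\ref{lem:3-1} supplying the Galois-equivariant induction step. The first step is to parametrize $\Irr_0(X\mid D)$ by $\mathcal F$. For $\chi\in\Irr_0(X\mid D)$, the arguments of \cite[Proposition~2.5]{NS} give a $D$-invariant constituent $\zeta\in\Irr_0(K\mid D_0)$ of $\chi_K$, so $\zeta\in\mathcal F$. They also give a Clifford correspondent $\eta\in\Irr_0(T_\zeta\mid D,\zeta)$ with $\eta^X=\chi$. Such constituents are unique up to $\N_X(D)$-conjugacy, and $\N_X(D)\leq C$. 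The same statement holds on the local side for $Y=K\N_X(D)$. Here $(U_\zeta)=Y_\zeta$, because $Y_\zeta=K\N_{X_\zeta}(D)$ by the Frattini-type factorization of stabilizers of $D$-invariant characters.

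After fixing representatives of the $\Gamma$-orbits on $\mathcal F$ together with pairs of projective representations affording \eqref{eq:3-19}, I transport both the maps $F_\zeta$ and the representation pairs along $\Gamma$. As in the proof of Theorem~\ref{thm:3-2}, the transported maps do not depend on the transporting element, by \eqref{eq:3-14}. I then define
\[
  F(\chi)=F_\zeta(\eta)^Y .
\]

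The second step is well-definedness, bijectivity and equivariance. If a different adapted pair is chosen, it differs from the first by some $x\in\N_X(D)\leq Y$, and transport then shows that $F(\chi)$ is unchanged. Injectivity and surjectivity follow from Clifford uniqueness on both sides, from the fact that $F_\zeta$ is a bijection, and from the irreducibility of $F_\zeta(\eta)^Y$, which uses $Y_\zeta=U_\zeta$. For the height-zero and defect-$D$ statements, I use the block induction chain $\bl(\eta')^{T_\zeta}=\bl(\eta)$ along $U_\zeta\leq T_\zeta\leq X$ and $U_\zeta\leq Y\leq X$. Together with the Brauer correspondence this gives $\bl(\chi')^X=\bl(\chi)$ with common defect group $D$, and the degree formula then transfers height zero as in Theorem~\ref{thm:3-2}. $C\times\HH$-equivariance comes from the $\Gamma_\zeta$-equivariance of $F_\zeta$ and the transport construction. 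Finally, the correction argument of Lemma~\ref{lem:2-9} gives
\[
  (C\times\HH)_\chi=(C\times\HH)_{\chi'}.
\]

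The third step, which I expect to be the main obstacle, is verifying \eqref{eq:3-21} via Lemma~\ref{lem:3-1}. I would take $A_{\chi^{\HH}}$ and $E_{(\chi')^{\HH}}$ in the roles of $A,B$, with $X,Y$ as $N,M$, and use $G_1=(A_{\chi^{\HH}})_\zeta\cap(\ldots)_\eta$ with $N_1=T_\zeta$ and $M_1=U_\zeta$. The crucial point is that the Clifford indices $[X:T_\zeta]$ and $[Y:U_\zeta]$ genuinely differ, which is exactly what Lemma~\ref{lem:3-1} permits. Several hypotheses must then be checked:
\begin{itemize}
\item $A_{\chi^{\HH}}=XE_{(\chi')^{\HH}}$, which I get from $A=XC$ (Frattini, since $D$ is a defect group of $\bl(\chi)$ and $\chi^{\HH}$ is permuted) together with the mixed-stabilizer equality;
\item the induction bijections in condition~(i), from Clifford theory over $\zeta$ on both sides;
\item condition~(ii), from height zero of all four characters, and from $\N_X(D)\leq Y$, which yields $\N_X(Q)\leq Y$ for $Q=D$;
\item condition~(iii), where an element of the mixed stabilizer of $\chi$ is corrected by some $m\in\N_X(D)\leq Y$ so that it fixes $\zeta$ and $\eta$; the comparison identities \eqref{eq:3-3} are then those of the pair affording \eqref{eq:3-19}.
\end{itemize}
The delicate point is that \eqref{eq:3-19} lives inside $A^*_\zeta$ and $E_\zeta$. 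Its inertia groups must therefore be identified with $G_1$ and $H_1$ for the ambient groups attached to $\chi$. I would do this by restricting the relation via Lemma~\ref{lem:transport}(i) to $(A_{\chi^{\HH}})_{\zeta^{\HH}}$, checking that its inertia subgroup of $\eta$ coincides with the required $G_1$, and verifying that $H_1=G_1\cap E$.
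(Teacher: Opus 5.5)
Your plan is correct and is essentially the paper's proof. It uses the same parametrization by $D$-adapted pairs $(\zeta,\eta)$ via \cite[Proposition~2.5]{NS}, the definition $F(\chi)=F_\zeta(\eta)^Y$ using $Y_\zeta=U_\zeta$, the two block-induction chains through $U_\zeta$ for defect group $D$ and height zero, and Lemma~\ref{lem:3-1} with $N_1=T_\zeta$, $M_1=U_\zeta$ and a correcting element of $Y$.

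Two small fixes. First, the Frattini step gives only $A_{\chi^{\HH}}\leq XC$, using Lemma~\ref{lem:2-1}; the claim $A=XC$ is false in general. Second, no restriction of \eqref{eq:3-19} is needed: as the paper observes, $G_1=(A_\chi)_\zeta$ already equals the inertia group $(A^*_\zeta)_\eta$ of $\eta$ in \eqref{eq:3-19}, and likewise $H_1=(E_{\chi'})_\zeta=(E_\zeta)_{\eta'}$.
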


\begin{proof}
We have $Y\unlhd E$, $X\cap E=Y$, and $D\leq Y$.
For $\zeta\in\mathcal F$, we have $D\leq T_\zeta$ and
\begin{equation}\label{eq:3-22}
  Y_\zeta=K(\N_X(D))_\zeta
    =K\N_{X_\zeta}(D)=U_\zeta.
\end{equation}
Transport the maps $F_\zeta$ from the chosen representatives
to all of $\mathcal F$. Equivariance under the mixed
stabilizer makes the resulting maps independent of the
transporting choices and gives
\begin{equation}\label{eq:3-23}
  F_{\zeta^a}(\eta^a)=F_\zeta(\eta)^a
  \qquad(a\in\Gamma).
\end{equation}
We transport the associated pairs at the same time; their
comparison functions remain equal on each mixed stabilizer.

Let $\chi\in\Irr_0(X\mid D)$. By
\cite[Proposition~2.5(a),(e),(f)]{NS}, choose a $D$-adapted
constituent $\zeta\in\mathcal F$ with Clifford correspondent
\[
  \eta\in\Irr_0(T_\zeta\mid D,\zeta),\qquad \eta^X=\chi.
\]
Define
\begin{equation}\label{eq:3-24}
  \eta'=F_\zeta(\eta),\qquad F(\chi)=(\eta')^Y.
\end{equation}
This induction is irreducible by \eqref{eq:3-22}. If
$\zeta_1$ is another adapted constituent, with correspondent
$\eta_1$, then \cite[Proposition~2.5(f)]{NS} gives
$x\in\N_X(D)$ such that
$(\zeta_1,\eta_1)=(\zeta^x,\eta^x)$.
Equation~\eqref{eq:3-23} replaces $\eta'$ by $(\eta')^x$.
Since $x\in Y$, induction to $Y$ leaves the character
unchanged. Thus \eqref{eq:3-24} is well-defined. For
$a\in\Gamma$, the constituent $\zeta^a$ is adapted to
$\chi^a$, with Clifford correspondent $\eta^a$. Applying
\eqref{eq:3-23} and inducing yields $F(\chi^a)=F(\chi)^a$.

Write $T=T_\zeta$ and $U=U_\zeta$. The hypothesis gives
$\bl(\eta')^T=\bl(\eta)$. Block induction along
$U\leq T\leq X$ and $U\leq Y\leq X$, as in the proof
of \cite[Proposition~7.4]{NS}, gives
\begin{equation}\label{eq:3-25}
  \bl\bigl((\eta')^Y\bigr)^X=\bl(\eta^X)=\bl(\chi).
\end{equation}
Choose a defect group $R\geq D$ of
$\bl((\eta')^Y)$. By \eqref{eq:3-25}, it is contained
in an $X$-conjugate of $D$, so $R=D$. Since $\eta'$
has height zero,
\[
  \bigl((\eta')^Y\bigr)(1)_p|D|
     =[Y:U]_p\eta'(1)_p|D|=|Y|_p.
\]
Hence $F(\chi)\in\Irr_0(Y\mid D)$ for every
$\chi\in\Irr_0(X\mid D)$.

Conversely, let $\chi'\in\Irr_0(Y\mid D)$. Choose a
$D$-adapted constituent $\zeta\in\mathcal F$ on $K$
and a Clifford correspondent
$\eta'\in\Irr_0(Y_\zeta\mid D,\zeta)$ such that
$(\eta')^Y=\chi'$. By \eqref{eq:3-22}, we may set
$\eta=F_\zeta^{-1}(\eta')$, and $\chi=\eta^X$ is
irreducible. Since $\N_X(D)\leq Y$, Brauer's first main
theorem shows that $\bl(\chi')^X$ is defined and has
defect group $D$. The two block induction chains identify
it with $\bl(\eta)^X=\bl(\chi)$. Since $\eta$ has height zero and $\bl(\eta)$ has defect
group $D$, the degree formula for induction shows that
$\chi$ has height zero. This
proves surjectivity.

Suppose that $F(\chi_1)=F(\chi_2)$, and choose adapted
pairs $(\zeta_i,\eta_i)$. Then
$\eta'_i=F_{\zeta_i}(\eta_i)$ are $D$-adapted Clifford
correspondents of the same local character. By
\cite[Proposition~2.5(f)]{NS}, there is
$x\in\N_Y(D)=\N_X(D)$ such that
$(\zeta_2,\eta'_2)=(\zeta_1^x,(\eta'_1)^x)$.
Equation~\eqref{eq:3-23} and injectivity of $F_{\zeta_2}$
give $\eta_2=\eta_1^x$. Induction to $X$ yields
$\chi_1=\chi_2$. Thus $F$ is a bijection.

Since $E=KC$ and $K$ acts trivially on irreducible characters
of both $X$ and $Y$, equivariance gives
\begin{equation}\label{eq:3-26}
  (E\times\HH)_\chi=(E\times\HH)_{\chi'}.
\end{equation}
Moreover, $D$ is a defect group of $\bl(\chi)$, and
$\HH$ preserves the actual defect groups by Lemma~\ref{lem:2-1}.
For every $g\in A_{\chi^{\HH}}$, we can therefore choose
$x\in X$ such that $gx\in\N_A(D)$. Inner conjugation
fixes $\chi$, so $gx$ still stabilizes $\chi^{\HH}$.
Together with \eqref{eq:3-26}, this gives
\begin{equation}\label{eq:3-27}
  A_{\chi^{\HH}}=XE_{(\chi')^{\HH}},\qquad
  X\cap E_{(\chi')^{\HH}}=Y.
\end{equation}

Fix the adapted characters $\zeta,\eta,\eta'$, and let
$a=(h,\sigma)\in(E\times\HH)_\chi$. Write $h=nk$ with
$n\in C$ and $k\in K$. Then $a_1=a(k^{-1},1)$ still
fixes $\chi$, and its group component normalizes $D$.
Thus $(\zeta^{a_1},\eta^{a_1})$ and $(\zeta,\eta)$ are
both $D$-adapted constituent--correspondent pairs for
$\chi$. By \cite[Proposition~2.5(f)]{NS}, there is
$u\in\N_X(D)$ such that
\[
  \zeta^{a_1}=\zeta^u,\qquad \eta^{a_1}=\eta^u.
\]
Set $a_0=a_1(u^{-1},1)=a((uk)^{-1},1)$. Its group
component normalizes $D$, it fixes $\zeta,\eta$, and
the correcting element $uk$ belongs to $Y$. By
\eqref{eq:3-23}, it also fixes $\eta'$. Consequently,
$a_0$ belongs to the local mixed stabilizer in
\eqref{eq:3-19}, where the fixed associated pair has
equal comparison functions.

Apply Lemma~\ref{lem:3-1} with
\[
  G=A_\chi,\qquad H=E_{\chi'},\qquad
  G_1=(A_\chi)_\zeta,\qquad H_1=(E_{\chi'})_\zeta.
\]
Clifford uniqueness identifies $G_1$ with the ordinary
stabilizer of $\eta$ in $A^*_\zeta$, and $H_1$ with
the ordinary stabilizer of $\eta'$ in $E_\zeta$.
For the latter equality, let $h\in(E_{\chi'})_\zeta$
and write $h=kn$ with $k\in K$ and $n\in\N_A(D)$.
Since $k$ fixes $\zeta$, so does $n$; hence
$h\in E_\zeta$. Clifford uniqueness now implies that
$h$ fixes $\eta'$. Conversely, $\eta'_K$ is a positive
multiple of $\zeta$, and induction commutes with
conjugation, giving the reverse inclusion. We also have
\[
  G=XG_1,\qquad H=YH_1,\qquad
  G_1\cap X=T_\zeta,\qquad H_1\cap Y=U_\zeta.
\]
The ordinary part of \eqref{eq:3-19} is precisely the
block isomorphism required in Lemma~\ref{lem:3-1}.

Let $X\leq V\leq A_\chi$. We have $K\unlhd V$, and
$V$-invariance of $\chi$ shows that the $V$-orbit of
its constituents on $K$ is already the $X$-orbit.
Hence $V=XV_\zeta$. If $\xi\in\Irr(V\mid\chi)$,
its Clifford correspondent $\xi_\zeta$ over $\zeta$
restricts to $T_\zeta$ with only $\eta$ as a
constituent: $\xi_X$ is a multiple of $\chi$, and
$\eta$ is the unique Clifford correspondent of $\chi$
over $\zeta$. Conversely, a character
$\xi_1\in\Irr(V_\zeta\mid\eta)$ lies over $\zeta$,
so $\xi_1^V$ is irreducible and lies over
$\eta^X=\chi$. Thus induction gives a bijection
\[
  \Irr(V_\zeta\mid\eta)\longrightarrow\Irr(V\mid\chi).
\]
Locally, $K\unlhd V\cap E$, $Y\leq V\cap E$, and
$\chi'$ is invariant in $V\cap E$. The Clifford
correspondent over $\zeta$ restricts to $Y_\zeta$
with only $\eta'$ as a constituent; conversely,
induction is irreducible and lies over
$(\eta')^Y=\chi'$. This proves both bijections in
condition~(i) for every intermediate group.

All four characters $\eta,\eta',\chi,\chi'$ have
height zero, so both height differences vanish. The block $\bl(\chi')$ has the specified defect group
$D$, and $\N_X(D)\leq Y$, proving~(ii). The element
$a_0$ constructed above normalizes
$G_1,H_1,T_\zeta,U_\zeta$: it fixes the relevant
constituents and upper characters, and the Galois
action commutes with the group action. The given
relation yields equality of its comparison functions,
proving~(iii). Lemma~\ref{lem:3-1} and
\eqref{eq:3-27} now give \eqref{eq:3-21}. The
construction uses separate representatives for
$T_\zeta\backslash X$ and $U_\zeta\backslash Y$,
and does not require equality of the two Clifford
indices.
\end{proof}


\end{document}